\newtheorem{theorem}{Theorem}[section]
\newtheorem{corollary}[theorem]{Corollary}
\newtheorem{lemma}[theorem]{Lemma}
\newtheorem{proposition}[theorem]{Proposition}
\theoremstyle{definition}
\theoremstyle{remark}
\newtheorem{remark}[theorem]{Remark}
\numberwithin{equation}{section}
\newcommand\RR{{{\mathbb R}}}
\begin{document}

\title[The non cutoff Kac equation]
{Sharp regularizing effect of the Cauchy problem
for inhomogeneous non-cutoff  Kac equation}
\author[X. Cai, H. Cao \& C.-J. Xu] {Xinzhi Cai, Hongmei Cao and Chao-Jiang Xu}
\address{School of Mathematics and Key Laboratory of Mathematics  MIIT,
Nanjing University of Aeronautics and Astronautics, Nanjing 210016, China}
\email{Math22cxz@nuaa.edu.cn; hmcao\_91@nuaa.edu.cn; xuchaojiang@nuaa.edu.cn}

\subjclass[2020]{35A05, 35B65,
35D10, 42A38, 60H07, 82B40}
\date{}
\keywords{Non-cutoff Kac equation, regularizing
effect, Gevrey-Gelfand-Shilov space}

\begin{abstract}
In this work, we study the spatially inhomogeneous Kac equation with a non-cutoff cross section in a setting close to equilibrium. We prove that the solution to the Cauchy problem exhibits a sharp Gevrey-Gelfand-Shilov smoothing effect with an optimal radius. We employ a well-chosen exponential-type Fourier multiplier to establish the smoothing effect for position and velocity variables. 
\end{abstract}

\maketitle
\section{Introduction}
\label{section1}

The Cauchy problem for spatially inhomogeneous nonlinear Kac's equation is written as:
\begin{equation}
\left\{
\begin{array}{ll}
\partial_t f+v\,\partial_x f=K(f,\,\,f), &(x,  v)\in \mathbb{R}^2,\,\,t>0, \\
f|_{t=0}=f_{0}(x, v),
\end{array}
\right.  \label{1.1}
\end{equation}%
where $f=f(t,\,x,\, v)$ is the nonnegative density distribution function of
particles at position $x\in\RR$ with velocity $v\in {{\mathbb{R}}}$ at time $t$. The right hand
side of equation (\ref{1.1}) is given by Kac's bilinear collisional operator with respect to the velocity variable,
\begin{equation*}
K(f,\,\,g)=\int_{{{\mathbb{R}}}}\int_{-\pi /2}^{\pi /2}\beta (\theta
)\left\{ f(v_{\ast }^{\prime })g(v^{\prime })-f(v_{\ast })g(v)\right\}
d\theta dv_{\ast }\,,
\end{equation*}%
where the pre-post collision velocities can be defined by
\begin{equation*}
v^{\prime }=v\,\cos \theta -v_{\ast }\,\sin \theta , \ \ \  v_{\ast
}^{\prime }=v\,\sin \theta +v_{\ast }\,\cos \theta .
\end{equation*}%
We suppose that the cross-section kernel is non cut-off in the following sense:
\begin{equation}
\beta (\theta )=C_{0}\frac{|\cos \theta |\,\,\,\,\,}{\,\,|\sin \theta
|^{1+2s}},\,\,\,\,\,-\frac{\pi }{2}\leq \theta \leq \frac{\pi }{2}\,,
\label{1.1+1}
\end{equation}%
where $0<s<1$ and $C_{0}>0$.

In a situation close to the equilibrium framework, consider the fluctuation of density distribution function
$$
f(t, x, v)=\mu(v)+\sqrt{\mu}\,g(t, x, v), \ \ \mu(v)=(2\pi)^{-\frac{1}{2}}e^{-\frac{v^2}{2}}.
$$
Note that $K(\mu,\mu)=0$ and, we turn to the following Cauchy problem
\begin{equation} \label{eq-1}
\left\{
\begin{aligned}
&\partial_t g+v\partial_xg+\mathcal{L}g=\mathcal{K}(g, g),\,\,\,\, (x, v)\in\mathbb{R}^2,
\ \   t>0,\\
&g|_{t=0}=g_{0}(x, v),
\end{aligned} \right.
\end{equation}
where
\begin{equation}\label{KKK}
    \mathcal{K}(f, g)=\mu^{-\frac{1}{2}}K(\mu^{\frac{1}{2}}f, \mu^{\frac{1}{2}}g),
\end{equation}
and
$$
\mathcal{L} g=-\mathcal{K}(\mu^{1/2},  g) -\mathcal{K}(g, \mu^{1/2} )= \mathcal{L}_1 g +\mathcal{L}_2 g,
$$

The kernel of linear operator $\mathcal{L}$ is given by
$$
\mathcal{N}=\mathrm{Span}\{\sqrt{\mu},\, v\sqrt{\mu},\, |v|^2\sqrt\mu \}.
$$
Kac's equation is the simplified one-dimensional model of  Boltzmann equation, see \cite{nine}. There exists a series of works about the existence and regularity of solution for non-cutoff inhomogeneous Boltzmann equations (see \cite{two,six,norm,ten,D95,mildsolution,duan,expv2,hardpotential}).

We introduce now some notations, the following function spaces will be used. For $1\leq p\leq+\infty , \ell \in {{\mathbb{R}}}$,
\begin{equation*}
L_{\ell }^{p}({{\mathbb{R}}})=\Big\{f; \,\,\Vert f\Vert _{L_{\ell }^{p}}=\Big(%
\int_{{{\mathbb{R}}}}|\langle v\rangle ^{\ell }f(v)|^{p}dv\Big)%
^{1/p}<+\infty \Big\},
\end{equation*}
where $\langle \ \cdot\ \rangle =(1+|\ \cdot\ |^{2})^{\frac 12}$. For $r \in {{\mathbb{R}}}$,
$H^{r}(\mathbb{R})$ is the classical Sobolev space,  
and $ H^r_x(L^2_v)=H^r(\mathbb{R}_x;L^2(\mathbb{R}_v))$ .

Given $\sigma \geq 1$, the Gevrey class $\mathcal{G}^{\sigma} (\mathbb{R}^n )$ (of index $\sigma$) is defined as the set of all functions $f \in C ^ {\infty} ( \mathbb{R}^n)$ such that there exists a $C> 0$ satisfying
\begin{equation*}
\| \partial ^ { \alpha } f\|_{L^2(\mathbb R^n)} \leq C ^ { |\alpha| + 1 } ( \alpha ! ) ^ {\sigma }, \qquad \forall\,\alpha \in \mathbb{N}^n,
\end{equation*}
or equivalently, there exists $c_0>0$ such that $e^{c_0\langle D\rangle^{\frac 1\sigma}} f\in L^2(\mathbb{R}^n)$, where $c_0$ is called the Gevrey radius.  The analytic space is with $\sigma=1$ and we denote by  $\mathcal{G}^1(\mathbb R^n)=\mathcal{A}(\mathbb{R}^n)$. 

The Gelfand-Shilov space $S^\mu_\nu(\mathbb{R}^d)$ with $\mu, \nu>0, \mu+\nu\ge 1$ is
$$
\|x^\beta\partial^\alpha{f}\|_{L^2(\mathbb R^n)} \leq C^{|\alpha|+|\beta|+1}(|\alpha|!)^{\mu}
(|\beta|!)^{\nu},\quad  \forall\alpha, \beta\in\mathbb{N}^n,
$$
it is equivalent to  
\begin{equation*}\label{1-1-2bbb}
e^{c_0(\langle D_x\rangle^{\frac1{\mu}}+\langle x\rangle^{\frac1{\nu}})}f\in L^2(\mathbb{R}^n),
\end{equation*}
where $c_0$ is called the Gelfand-Shilov radius.

For the spatial inhomogeneous non cut-off Kac equation, the paper \cite{LMPX}
has proven the existence of the solution and the smoothing effect of Cauchy problem \eqref{eq-1}, furthermore
$$
g\in L^\infty(]0, T[; \mathcal{G}^{1+\frac{1}{2s}}(\mathbb{R}_x; S^{1+\frac{1}{2s}}_{1+\frac{1}{2s}}(\mathbb{R}_v))),
$$ 
for any $T>0$.  \cite{three} study the well-posedness when initial datum belonging to the spatially critical Besov space, and improves the  Gevrey index of  \cite{LMPX}. The purpose of this paper is to improve the regularity results in \cite{three} to the optimal index which is mentioned in \cite{hardpotential} for Boltzmann equation, but we obtain here the optimal Gevrey-Gelfand-Shilov radius.

The following is the main Theorem of this work.

\begin{theorem}\label{theo1}
For $r>\frac{1}{2}$, assume that the initial datum $g_{0}\in H^r_x(L^2_v)$, and the cross-section
$\beta $ satisfy \eqref{1.1+1} with $0<s<1$. For all $g_0$ satisfies
$$
||g_0||_{H^r_x(L^2_v)}\leq\epsilon_0
$$
with $\epsilon_0>0$ a small constant,
the Cauchy problem \eqref{eq-1} admits a unique global solution such that, for any $T>0 $
\begin{align*}
g&\in L^\infty(]0, T[;  \mathcal{G}^{\frac{1}{{2s}}}(\mathbb{R}_{x};\, S^{\frac 1{2s}}_{\frac 1{2s}}(\mathbb R_v))), \quad \text{when} \quad 0<s< 1/2,
\\
g &\in L^\infty(]0, T[; \mathcal{A}(\mathbb{R}_{x};\, S^{1}_{1}(\mathbb R_v))), \qquad \text{when} \quad 1/2\leq s< 1. 
\end{align*}
\end{theorem}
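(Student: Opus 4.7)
The plan is to bootstrap the global $H^r_x(L^2_v)$-solution, which is already available from \cite{three} under the same smallness hypothesis, by a weighted energy estimate for a carefully chosen exponential Fourier/phase multiplier whose $L^2$-boundedness is precisely equivalent to the target Gevrey-Gelfand-Shilov membership. Writing $\sigma=\max\{1,1/(2s)\}$, I would work with
\begin{equation*}
M(t)=\exp\bigl(c_0\,\lambda(t)\,\mathcal{A}\bigr),\qquad \mathcal{A}=\langle D_x\rangle^{1/\sigma}+\langle D_v\rangle^{1/\sigma}+\langle v\rangle^{1/\sigma},
\end{equation*}
with $c_0>0$ small and $\lambda(t)$ a nondecreasing piecewise-affine cut-off satisfying $\lambda(0)=0$. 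Uniform control of $\|M(t)g(t)\|_{H^r_x(L^2_v)}$ on $[0,T]$ then delivers the claimed regularity, since the three pieces of $\mathcal{A}$ encode exactly the Gevrey index $\sigma$ in $x$ together with the smoothness and decay indices $\sigma$ in $v$.

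\textbf{Linear ingredients.} I would first collect the standard facts from the non-cutoff Kac/Boltzmann literature: the coercivity $(\mathcal{L}h,h)_{L^2_v}\gtrsim \|h\|_{H^s_v}^2+\|\langle v\rangle^s h\|_{L^2_v}^2-\|\Pi h\|_{L^2_v}^2$, with $\Pi$ the projection onto $\mathcal{N}$; trilinear upper bounds for $\mathcal{K}$ in weighted $v$-Sobolev norms; and the algebra property of $H^r_x$ for $r>1/2$. After conjugation by $M$, both the $\mathcal{L}$-term and the $\mathcal{K}$-term reduce to commutator estimates on $M$, which are tractable because the symbol $c_0\lambda(t)\mathcal{A}$ has a well-behaved Weyl-type calculus.

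\textbf{Main estimate and main obstacle.} Writing the weighted energy identity
\begin{equation*}
\tfrac12\partial_t\|Mg\|_{H^r_x(L^2_v)}^2+(M\mathcal{L}g,Mg)+(Mv\partial_xg,Mg)-(\dot Mg,Mg)=(M\mathcal{K}(g,g),Mg),
\end{equation*}
the \emph{main obstacle} is the transport commutator $(Mv\partial_xg,Mg)$. Because $M$ depends simultaneously on $v$ and on $D_x$ through $\mathcal{A}$, the commutator $[M,v\partial_x]$ is non-trivial and a priori carries a first-order $\partial_x$ coupled to a $v$-factor. A symbolic calculus for $e^{c_0\lambda(t)\mathcal{A}}$ identifies its principal part schematically as $c_0\lambda(t)\sigma^{-1}\bigl(\langle v\rangle^{1/\sigma-1}-\langle D_v\rangle^{1/\sigma-1}D_v\bigr)\partial_x\,M$ plus lower-order remainders. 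Since $1/\sigma\le 2s$, this loss is dominated, modulo small multiplicative constants, by the dissipative gain $(M\mathcal{L}g,Mg)\gtrsim\|Mg\|_{H^s_v}^2$ together with the positive contribution $\dot\lambda(t)(\mathcal{A}Mg,Mg)$ coming from $(\dot Mg,Mg)$. This is precisely where the sharp value $\sigma=\max\{1,1/(2s)\}$ is forced: a smaller value of $1/\sigma$ would leave an uncompensated factor here, and this is also the step that improves upon \cite{three}.

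\textbf{Nonlinear term and closure.} For the bilinear contribution I would use the splitting $M\mathcal{K}(f,g)=\mathcal{K}(Mf,Mg)+R(f,g)$, where $R$ is a lower-order remainder produced by the symbolic calculus for $M$. The trilinear Kac estimate together with the $H^r_x$-algebra property then yields
\begin{equation*}
\bigl|(M\mathcal{K}(g,g),Mg)\bigr|\lesssim \|Mg\|_{H^r_x(L^2_v)}\,\|Mg\|_{H^r_x(H^s_v)}^2,
\end{equation*}
which is absorbed by the dissipative gain thanks to $\|g_0\|_{H^r_x(L^2_v)}\le\epsilon_0$. A Gr\"onwall/continuity argument in $t$ closes the bound to give $\|Mg\|_{L^\infty(0,T;\,H^r_x(L^2_v))}\lesssim\|g_0\|_{H^r_x(L^2_v)}$ uniformly on $[0,T]$, which, via the Paley-Wiener-type characterisations recalled in the introduction, yields exactly the Gevrey-Gelfand-Shilov regularity with sharp index $\sigma=\max\{1,1/(2s)\}$ claimed in Theorem \ref{theo1}.
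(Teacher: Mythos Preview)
Your architecture is right (conjugate by an exponential multiplier, use the coercivity of $\mathcal L$, absorb the bilinear term by smallness), but the specific multiplier you propose breaks at exactly the step you single out as the main obstacle. In Fourier variables, with $m(t,\eta,\xi)=\exp\bigl(c_0\lambda(t)(\langle\eta\rangle^{1/\sigma}+\langle\xi\rangle^{1/\sigma})\bigr)$ (setting aside the $\langle v\rangle$ weight), one has $\widehat{[M,v\partial_x]g}=\eta\,(\partial_\xi m)\,\hat g=c_0\lambda(t)\sigma^{-1}\,\eta\,\xi\langle\xi\rangle^{1/\sigma-2}\,m\hat g$, so the commutator term is
\[
c_0\lambda(t)\,\sigma^{-1}\int\langle\eta\rangle^{2r}\,\eta\,\xi\langle\xi\rangle^{1/\sigma-2}\,|m\hat g|^2\,d\xi\,d\eta.
\]
This carries a \emph{full} power of $|\eta|$. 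The only $\eta$-gain available on the right is $\dot\lambda(t)\langle\eta\rangle^{1/\sigma}$ from the $\dot M$ contribution; the dissipation $(M\mathcal Lg,Mg)$ supplies only $v$-derivatives. For $0<s<1/2$ one has $1/\sigma=2s<1$, so in the regime $|\eta|\gg|\xi|$ the needed inequality $c_0\lambda\,|\eta|\,\langle\xi\rangle^{2s-1}\lesssim\dot\lambda\,\langle\eta\rangle^{2s}$ fails for every choice of $\lambda$. A separable exponent $\lambda(t)\bigl(\langle\eta\rangle^{1/\sigma}+\langle\xi\rangle^{1/\sigma}\bigr)$ simply cannot cancel the hypoelliptic mixing of $\eta$ and $\xi$ produced by transport. (Also, since $[\langle v\rangle^{1/\sigma},v\partial_x]=0$, the $\langle v\rangle^{1/\sigma-1}\partial_x$ piece in your commutator formula should not be there.)

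The paper's device is designed precisely to kill this mixing at the level of the exponent: one takes
\[
\Psi_s(t,\eta,\xi)=c_0\int_0^t\langle\xi+\rho\eta\rangle^{2\tilde s}\,d\rho,\qquad \tilde s=\min\{s,\tfrac12\},
\]
so that $(\partial_t-\eta\partial_\xi)\Psi_s=c_0\langle\xi\rangle^{2\tilde s}$; the transport operator applied to the exponent produces only a pure $\xi$-term of order $2\tilde s\le 2s$, which the $H^s_v$ coercivity of $\mathcal L$ absorbs. The link with the target regularity is the elementary equivalence $\int_0^t\langle\xi+\rho\eta\rangle^{2\tilde s}d\rho\sim t\bigl(1+|\xi|^{2\tilde s}+t^{2\tilde s}|\eta|^{2\tilde s}\bigr)$, which also forces the different radii $c_0t$ in $v$ versus $c_0t^{1+2\tilde s}$ in $x$. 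Finally, the $\langle v\rangle^{1/\sigma}$ weight is not placed in the same exponent---mixing it with $\langle D_v\rangle^{1/\sigma}$ would require a genuine exotic calculus that your sketch does not supply---but is handled in a second, independent pass with the physical-space multiplier $G_\delta(t,v)=e^{c_0t\langle v\rangle^{2\tilde s}}/(1+\delta e^{c_0t\langle v\rangle^{2\tilde s}})$.
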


\begin{remark}
In fact, we have proven
\begin{equation*}
e^{c_0(t^{1+2\widetilde{s}}(1-\Delta_x)^{\widetilde{s}}+t(1-\Delta_v+|v|^2)^{\widetilde{s}})}g(t)\in L^\infty([0,\, T]; \,H^r_x(L^2_v)),
\end{equation*}
with $\widetilde{s}=\min\{s, \frac 1 2\}$. So that the $2\tilde{s}$-Gevrey radius for $x$ variable is $c_0t^{1+2\widetilde{s}}$, and 
$2\tilde{s}$-Gelfand-Shilov radius for $v$ variable is $c_0t$. Compare with the results of Boltzmann equation in \cite{hardpotential}, where they prove  the $2\tilde{s}$-Gevrey smoothing effect of $(x, v)$ variables, and the Gevrey radius is $c_0t^a$ with a big $a>1$, so we improve considerably the Gevrey radius, and we gain also the exponential decay for velocity variable. 
\end{remark}

\begin{remark}
The results of this Theorem improves the results in \cite{LMPX,LX,three,G-N2013,paperMorimotoUkai}. Our computation was inspired by the work in \cite{four} for the Landau equation( other improvement can be seen in \cite{LAndau,one} ), but our problem is like a non linear form of pseudo-differential operators, so that the computation is more complicated. The index and radius of Gevery properties is claimed to be sharp, see \cite{four,hardpotential}.
\end{remark}

\vskip0.5cm
\section{Schema for the proof of Main theorem }\label{section2}\smallskip

The paper \cite{LMPX,three} has already proven that the Cauchy problem \eqref{eq-1} admet a unique solution   
\begin{equation}\label{2.10}
g\in L^\infty(]0, T[; \mathcal{G}^{1+\frac{1}{2s}}(\mathbb{R}_x; S^{1+\frac{1}{2s}}_{1+\frac{1}{2s}}(\mathbb{R}_v))),
\end{equation}
and
\begin{equation}\label{2.11}
    \|g\|_{L^\infty([0, T]; H^r_x(L^2_v))}\le C_0\epsilon_0.
\end{equation}
and it is the limit of the following approximation system:
\begin{equation}\label{2.11b}
\begin{cases}
\partial_t g^{n+1}+v\,\partial_x g^{n+1}+\mathcal{L} g^{n+1}=\mathcal{K}(g^{n}, g^{n+1}),\quad n\ge 0\\
g^{n+1}|_{t=0}=g_{0}(x, v),\\
g^0=e^{-t(1-\triangle_{x, v})}g_0.
\end{cases}
\end{equation}
The existence, regularity, smallness and the convergence of $\{ g^n\}$ with norm as \eqref{2.10} and \eqref{2.11} is given in  \cite{LMPX,three}.  

We just need to improve the Gevrey index of above approximation sequence, and try to prove
\begin{equation}\label{2.1}
\|e^{c_0(t^{1+2\widetilde{s}}(1-\Delta_x)^{\widetilde{s}}+t(1-\Delta_v)^{\widetilde{s}})}g^{n}\|_{L^\infty([0,\, T]; \,H^r_x(L^2_v))}\le C_0\epsilon_0,
\end{equation}
with $\widetilde{s}=\min\{s, \frac 12\}$ and $C_0$ independent of $n$, this imply,
$$
g\in L^\infty(]0, T[; \mathcal{G}^{\frac{1}{2{s}}}(\mathbb{R}^2_{x, v})),\ \ \ \ 
0<s<\frac 12,
$$ 
and
$$
g\in L^\infty(]0, T[; \mathcal{A}(\mathbb{R}^2_{x, v})),\ \ \ \ \frac 12\le s<1.
$$ 
To understand the idea of proof of \eqref{2.1}, we consider the following  Cauchy problem of the fractional Kolmogorov equation
\begin{equation}\label{Kolmogorov}
\left\{
\begin{array}{ll}
\partial_t f+v\,\partial_x f+(1-\Delta_v)^s f=0, &(x,  v)\in \mathbb{R}^{2n},\,\,t>0, \\
f|_{t=0}=f_{0}(x, v)\in L^2(\RR^{2n}),
\end{array}
\right.  
\end{equation}%
by using Fourier transform, we obtain 
$$
\widehat{f}(t,\eta,\xi)=e^{-\int_0^t\langle \xi+\rho\eta\rangle^{2s}d\rho}\widehat{f}_0(\eta,\xi+t\eta).
$$
Use now the inequality (see \cite{four} ), for $\alpha>0$,
\begin{equation}\label{Ukai}
\int^t_0 (1+| \xi +\rho \eta |^2) ^{\alpha/2} d\rho \ \sim \  t\left \{1+ |\xi|^\alpha+ t^\alpha |\eta|^\alpha\right \},
\end{equation}
we can obtain
\begin{align*}
    e^{c(t(1-\Delta_v)^s+t^{1+2s}(1-\Delta_x)^s)}f\in L^2(\RR_{x,v}^{2n}).
\end{align*}
Here the equation in \eqref{Kolmogorov} is linear with constent coefficients, if we come back to the non linear equation \eqref{eq-1}, we have to use the covexity of exponential function which need
$$
\langle \xi\rangle^{\sigma}\le \langle \xi-\eta\rangle^{\sigma}+\langle \eta\rangle^{\sigma},\quad \forall \xi,\ \eta\in\mathbb R^d, 
$$
and this hold true only for $\sigma\le 1$, 
so that we have to replace $2s$ by $2\tilde{s}\le 1$, then we try to prove
\begin{equation}\label{2.1b}
\|e^{\Psi_s(t, D_x, D_v)}g^{n}\|_{L^\infty([0,\, T]; \,{H^r_x( L^2_v)})}\le C_0\epsilon_0,
\end{equation}
with
$$
\Psi_s(t, \eta, \xi)= c_0 \int^t_0\langle \xi+\rho\eta\rangle^{2\widetilde{s}} d\rho,
$$
But our solution of \eqref{2.11b} satisfy only \eqref{2.10}, we can't use 
$$
e^{2\Psi_s(t, D_x, D_v)}g^{n+1}(t)
$$
as test function to the equation in \eqref{2.11b}, then we will choose
\begin{equation}\label{DM}
    M_{\delta}(t,\eta,\xi)=\frac{e^{\Psi_s(t,\eta,\xi)}}{1+\delta e^{\Psi_s(t,\eta,\xi)}}, \ \ \ 0<\delta\ll 1,
\end{equation}
so that we have, for any $0<\delta\ll 1$ 
$$
M^2_{\delta}(t,D_x, D_v) g^{n+1}(t)\in L^\infty(]0, T[; \mathcal{G}^{1+\frac{1}{2s}}(\mathbb{R}_x; S^{1+\frac{1}{2s}}_{1+\frac{1}{2s}}(\mathbb{R}_v))),
$$
it is a good test function for the equation in \eqref{2.11b}, since $M^2_{\delta}(t,D_x, D_v)$ is a pseudo-differential operators of order $0$, then we have, for any $n\in\mathbb{N}$, 
\begin{equation}\label{2.6a}
    \begin{aligned}
        \Big((\partial_t +v\partial_x)g^{n+1},\ & M_{\delta}^2g^{n+1} \Big)_{H^r_x(L^2_v)}+\Big(\mathcal{L}g^{n+1},\ M_{\delta}^2g^{n+1} \Big)_{H^r_x(L^2_v)}\\
        &\qquad \qquad =\Big(\mathcal{K}(g^{n},\ g^{n+1}),M_{\delta}^2g^{n+1} \Big)_{H^r_x(L^2_v)}.
    \end{aligned}
\end{equation}
Our object is, by using the above equation, to establish the following estimate
\begin{equation}\label{2.7a}
    \|M_{\delta} g^{n}(t) \|_{H^r_x(L^2_v)}\le C\epsilon_0,\ \ \ \ \ 
    0\le t\le T,\ \ \ n\in\mathbb{N},
\end{equation}
with $C$ independs of $0<\delta\ll 1$ and $n$, then take $\delta\,\to\, 0$ in \eqref{2.7a} to get \eqref{2.1b}. 

We will prove \eqref{2.7a} by induction on $n\in\mathbb{N}$, it is true for $n=0$ by the choose of $g^0=e^{-t(1-\triangle_{x, v})}g_0$ and hypothesis for $g_0$ in Theorem \ref{theo1}. 

We study the three terms in \eqref{2.6a} by the following three Propositions,
\begin{proposition} \label{proposition2.1}
For the kinetic part of  \eqref{2.6a}, then for any function $g$ satisfy \eqref{2.10}, we have 
\begin{equation}\label{2.9}
    \Big((\partial_t +v\partial_x)g, M_{\delta}^2g \Big)_{H^r_x(L^2_v)}\ge \frac{1}{2}\frac{d}{dt}||M_{\delta}g||^2_{H^r_x(L^2_v)}-c_0||\langle D_v\rangle^sM_{\delta}g||^2_{H^r_x(L^2_v)}.  
\end{equation}
\end{proposition}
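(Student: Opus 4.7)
The plan is to treat $M_\delta(t,D_x,D_v)$ as a self-adjoint Fourier multiplier with real positive symbol, which commutes with $\langle D_x\rangle^r$, and to rewrite each of the two pieces $((\partial_t+v\partial_x)g, M_\delta^2g)_{H^r_x L^2_v} = I_1+I_2$ as a time derivative plus an explicit symbol commutator. For $I_1 = (\partial_t g, M_\delta^2 g)$, the Leibniz rule together with self-adjointness gives $I_1 = \tfrac{1}{2}\tfrac{d}{dt}\|M_\delta g\|^2_{H^r_x L^2_v} - ((\partial_t M_\delta)(D)g, M_\delta g)_{H^r_x L^2_v}$. For $I_2 = (v\partial_x g, M_\delta^2 g)$, the key is the Fourier-multiplier commutator identity $[v\partial_x, a(D_x,D_v)] = -\eta(\partial_\xi a)(D_x,D_v)$, proved by a one-line computation on the Fourier side (use $\widehat{v\,\cdot}=i\partial_\xi$). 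This yields $I_2 = (v\partial_x M_\delta g, M_\delta g) + (\eta(\partial_\xi M_\delta)(D)g, M_\delta g)$, and the first summand vanishes by integration by parts in $x$ once $\langle D_x\rangle^r$ is pushed through $v\partial_x$, since $v$ is $x$-independent.

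The heart of the argument is an algebraic identity for $\Psi_s$. From $\Psi_s(t,\eta,\xi)=c_0\int_0^t\langle\xi+\rho\eta\rangle^{2\widetilde{s}}\,d\rho$ and the observation $\partial_\rho\langle\xi+\rho\eta\rangle^{2\widetilde{s}}=\eta\,\partial_\xi\langle\xi+\rho\eta\rangle^{2\widetilde{s}}$, differentiation under the integral sign produces
\begin{equation*}
(\partial_t - \eta\partial_\xi)\Psi_s(t,\eta,\xi) = c_0\langle\xi\rangle^{2\widetilde{s}},
\end{equation*}
which is exactly the statement that $\Psi_s$ is the creation symbol $\langle\xi\rangle^{2\widetilde{s}}$ integrated along the free characteristics $(x,v)\mapsto(x+tv,v)$. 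Applying the chain rule to $M_\delta=e^{\Psi_s}/(1+\delta e^{\Psi_s})$ then gives $(\partial_t-\eta\partial_\xi)M_\delta = c_0\langle\xi\rangle^{2\widetilde{s}}\, P_\delta\, M_\delta$ with $P_\delta(\eta,\xi) := 1/(1+\delta e^{\Psi_s})\in(0,1]$. Combining with the previous step leaves
\begin{equation*}
((\partial_t+v\partial_x)g, M_\delta^2 g)_{H^r_x L^2_v} = \tfrac{1}{2}\tfrac{d}{dt}\|M_\delta g\|^2_{H^r_x L^2_v} - c_0\bigl(\langle D_v\rangle^{2\widetilde{s}} P_\delta(D) M_\delta g,\, M_\delta g\bigr)_{H^r_x L^2_v}.
\end{equation*}

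To finish I would use that $P_\delta$, $M_\delta$, $\langle D_v\rangle^{\widetilde{s}}$ are mutually commuting real Fourier multipliers: the error term equals $\|P_\delta^{1/2}\langle D_v\rangle^{\widetilde{s}} M_\delta g\|^2_{H^r_x L^2_v}$, which is bounded by $\|\langle D_v\rangle^{\widetilde{s}} M_\delta g\|^2_{H^r_x L^2_v}\le \|\langle D_v\rangle^s M_\delta g\|^2_{H^r_x L^2_v}$ using $P_\delta\le 1$ and $\widetilde{s}\le s$. This gives \eqref{2.9}. The regularity \eqref{2.10} is used only to legitimate the integration by parts in $x$ and the time differentiation inside the $H^r_x L^2_v$-pairing; the rest of the argument is purely algebraic. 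I do not foresee any substantive obstacle: the one genuinely delicate point is the symbol identity $(\partial_t-\eta\partial_\xi)\Psi_s=c_0\langle\xi\rangle^{2\widetilde{s}}$, which is precisely the design principle behind the choice \eqref{DM} and is what makes the whole scheme \eqref{2.1} work.
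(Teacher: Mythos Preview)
Your proposal is correct and follows essentially the same approach as the paper. The paper works directly on the Fourier side with the transport operator $(\partial_t-\eta\partial_\xi)$ as a single object, while you split into $I_1$ and $I_2$ and recombine via the commutator identity $[v\partial_x,a(D)]=-\eta(\partial_\xi a)(D)$; but the mathematical core---the identity $(\partial_t-\eta\partial_\xi)\Psi_s=c_0\langle\xi\rangle^{2\widetilde{s}}$, the chain rule giving $(\partial_t-\eta\partial_\xi)M_\delta=c_0\langle\xi\rangle^{2\widetilde{s}}M_\delta/(1+\delta e^{\Psi_s})$, and the final bound via $P_\delta\le 1$ and $\widetilde{s}\le s$---is identical to the paper's proof.
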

\begin{proof} By using Plancherel formula, we have   
\begin{align*}
\Big((\partial_t +v\partial_x)g, M_{\delta}^2g \Big)_{H^r_x(L^2_v)}&=\Big((\partial_t -\eta\partial_\xi)\hat{g},\  \langle \eta\rangle^{2 r}M_{\delta}^2(t, \eta, \xi)\hat{g} \Big)_{L^2_{\eta, \xi}}\\
&=\Big((\partial_t -\eta\partial_\xi)\big(\langle \eta\rangle^{ r}M_{\delta}(t, \eta, \xi)\hat{g}\big),\  \langle \eta\rangle^{ r}M_{\delta}(t, \eta, \xi)\hat{g} \Big)_{L^2_{\eta, \xi}}\\
&\qquad -\Big(\{(\partial_t -\eta\partial_\xi)M_{\delta}(t, \eta, \xi)\}\langle \eta\rangle^{ r}\hat{g},\  \langle \eta\rangle^{ r}M_{\delta}(t, \eta, \xi)\hat{g} \Big)_{L^2_{\eta, \xi}},
 \end{align*}
here, for the last term, using the fact
\begin{align*}
(\partial_t -\eta\partial_\xi)&\Psi_s(t,\eta,\xi)=c_0\langle \xi\rangle^{2\tilde{s}},\\
(\partial_t -\eta\partial_\xi) M_\delta(t,\eta,\xi)&=\frac{M_\delta(t,\eta,\xi)}{1+\delta e^{\Psi_s}}(\partial_t -\eta\partial_\xi)\Psi_s(t,\eta,\xi)\\
&=\frac{M_\delta(t,\eta,\xi)}{1+\delta e^{\Psi_s}}c_0\langle \xi\rangle^{2\tilde{s}}. 
\end{align*}
We get
\begin{align*}
\Big((\partial_t +v\partial_x)g, M_{\delta}^2g \Big)_{H^r_x(L^2_v)}&=\frac 12\frac{d}{dt}\|M_\delta g\|^2_{H^r_x(L^2_v)}\\
&\quad -c_0\Big(\frac{M_{\delta}(t, \eta, \xi)}{1+\delta e^{\Psi_s}}\langle \xi\rangle^{2\tilde{s}}\langle \eta\rangle^{ r}\hat{g},\  \langle \eta\rangle^{ r}M_{\delta}(t, \eta, \xi)\hat{g} \Big)_{L^2_{\eta, \xi}},
 \end{align*}
on the other hand,
\begin{align*}
    \left|\Big(\frac{M_{\delta}(t, \eta, \xi)}{1+\delta e^{\Psi_s}}\langle \xi\rangle^{2\tilde{s}}\langle \eta\rangle^{ r}\hat{g},\  \langle \eta\rangle^{ r}M_{\delta}(t, \eta, \xi)\hat{g} \Big)_{L^2_{\eta, \xi}}\right|\le
    ||\langle D_v\rangle^sM_{\delta}g||^2_{H^r_x(L^2_v)},
\end{align*}
which imply then \eqref{2.9}.
\end{proof}

For the second term in  \eqref{2.6a}, we need to define an anisotropic norm associated with the linear operators $\mathcal{L} $, 
$$
\begin{aligned}
||| g|||^2=& \int_{\mathbb R^2_{v_\ast, v}}\int^{\frac \pi 2}_{-\frac \pi 2} \beta(\theta) \mu_* \big(g'-g\big)^2d\theta dv_\ast dv
\\&\qquad
+
\int_{\mathbb R^2_{v_\ast, v}}\int^{\frac \pi 2}_{-\frac \pi 2}\beta(\theta) g^2_* \big(\sqrt{\mu'} - \sqrt{\mu}\big)^2 d\theta dv_\ast dv,
\end{aligned}
$$
and
$$
||| g|||_{(r,0)}^2= \int_{\mathbb R} |||\langle D_x\rangle^r g|||^2 dx.
$$
The relationship of this anisotropic norm with the linear operators $\mathcal{L}$ is
\begin{equation}\label{2.2.1xb}
\Big(\mathcal{L} g,\, g\Big)_{H^r_x(L^2_v)}+C\|g\|^2_{H^r_x(L^2_v)} \sim ||| g|||^2_{(r,0)},
\end{equation}
and
\begin{equation}\label{2.2.1xc}
\|g\|^2_{H^r_x(H^s_v)}+||\langle v\rangle^s g||^2_{H^r_x(L^2_v)}\le C|||g|||^2_{(r, 0)}. 
\end{equation}

The analysis of linear operator $\mathcal{L}$ and anisotropic norm $|||\cdot|||$ can be seen in the Sction 2 of \cite{norm} or \cite{Upper1,Upper2}.

\begin{proposition}\label{proposition2.2}
For the linear part of  \eqref{2.6a}, there exists $c_1>0, C_1>0$, such that, for any function $g$ satisfy \eqref{2.10}, we have  
$$
    \big(\mathcal{L}g, M_{\delta}^2g \big)_{H^r_x(L^2_v)}\ge c_1 ||| M_{\delta} g|||_{(r,0)}^2 -C_1||M_{\delta}g||^2_{H^r_x(L^2_v)}.   
$$
\end{proposition}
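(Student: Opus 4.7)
The strategy is to commute the multiplier $M_\delta$ past $\mathcal{L}$ so that the anisotropic coercivity \eqref{2.2.1xb} can be invoked on $M_\delta g$, then show that the resulting commutator error is controlled by a small fraction of the dissipation plus a multiple of $\|M_\delta g\|_{H^r_x(L^2_v)}^2$. Since $M_\delta(t,D_x,D_v)$ is a self-adjoint Fourier multiplier that commutes with $\langle D_x\rangle^r$, and $\mathcal{L}$ acts only in $v$ and therefore also commutes with $\langle D_x\rangle^r$, we split
\begin{equation*}
(\mathcal{L}g,M_\delta^2 g)_{H^r_x(L^2_v)}=\bigl(\mathcal{L}(M_\delta g),M_\delta g\bigr)_{H^r_x(L^2_v)}+\bigl([M_\delta,\mathcal{L}]g,M_\delta g\bigr)_{H^r_x(L^2_v)}.
\end{equation*}
The regularity \eqref{2.10} together with the fact that $M_\delta$ is of order zero (for each fixed $\delta>0$) ensures that $M_\delta g$ belongs to the class where \eqref{2.2.1xb} applies, which immediately gives
\begin{equation*}
\bigl(\mathcal{L}(M_\delta g),M_\delta g\bigr)_{H^r_x(L^2_v)}\ge c\,||| M_\delta g|||_{(r,0)}^2-C\,\|M_\delta g\|_{H^r_x(L^2_v)}^2.
\end{equation*}

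The heart of the argument is the commutator estimate. Writing $\mathcal{L}=\mathcal{L}_1+\mathcal{L}_2$, for each fixed $x$-frequency $\eta$ the operators $\mathcal{L}_i$ act in $v$ only, so on the Fourier side the commutator with $M_\delta(t,\eta,D_v)$ becomes an integral weighted by the difference $M_\delta(t,\eta,\xi)-M_\delta(t,\eta,\xi')$, where $\xi$ and $\xi'$ are related by the Kac rotation coming from the collisional change of variables. The definition \eqref{DM} together with the $2\widetilde{s}$-growth of $\Psi_s$ in $\xi$ yields the pointwise symbolic bound
\begin{equation*}
|\partial_\xi M_\delta(t,\eta,\xi)|\le C\,t\,\langle\xi\rangle^{2\widetilde{s}-1}\,M_\delta(t,\eta,\xi),
\end{equation*}
uniformly in $\delta$. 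A first order Taylor expansion of $M_\delta$ along the segment from $\xi$ to $\xi'$ then shows that the commutator is, at the symbolic level, of order $2\widetilde{s}-1\le 0$ relative to $\mathcal{L}_i M_\delta$, i.e.\ strictly below the dissipation scale carried by $|||\cdot|||_{(r,0)}$.

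The principal obstacle is turning this symbolic heuristic into a quantitative estimate against the angular singularity $\beta(\theta)\sim|\theta|^{-1-2s}$. One splits the $\theta$-integration into $|\theta|\le\theta_0$ and $|\theta|>\theta_0$: on the outer region the integrand is bounded and produces an $L^2$-contribution that is absorbed into $\|M_\delta g\|_{H^r_x(L^2_v)}^2$; on the inner region the factor $|\xi-\xi'|\sim|\theta|\langle\xi\rangle$ from the Taylor remainder cancels one power of $|\theta|^{-1}$ from $\beta(\theta)$, leaving an $|\theta|^{-2s}$ weight that becomes integrable once paired either with a second Taylor step or with the dissipation norm $|||\cdot|||_{(r,0)}^2$ via \eqref{2.2.1xc}. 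The same scheme applied to $\mathcal{L}_2$ uses the Gaussian factor $\sqrt{\mu}$ to control the $\langle v\rangle^s$-weighted contributions through the second estimate in \eqref{2.2.1xc}. Collecting everything, one arrives at
\begin{equation*}
\bigl|([M_\delta,\mathcal{L}]g,M_\delta g)_{H^r_x(L^2_v)}\bigr|\le \frac{c}{2}\,||| M_\delta g|||_{(r,0)}^2+C'\,\|M_\delta g\|_{H^r_x(L^2_v)}^2,
\end{equation*}
and combining this with the coercivity term proves the proposition with $c_1=c/2$ and $C_1=C+C'$.
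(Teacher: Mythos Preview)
Your overall architecture---commute $M_\delta$ past $\mathcal{L}$, invoke the coercivity \eqref{2.2.1xb} on $M_\delta g$, and control the commutator---is exactly the paper's plan. The gap is in the commutator analysis, where the mechanism you sketch does not close.

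First, the symbolic bound you state is not available: $\partial_\xi\Psi_s(t,\eta,\xi)=c_0\int_0^t 2\widetilde s\,(\xi+\rho\eta)\langle\xi+\rho\eta\rangle^{2\widetilde s-2}\,d\rho$ depends on $\xi+\rho\eta$, not on $\xi$ alone, so the factor $\langle\xi\rangle^{2\widetilde s-1}$ cannot be extracted uniformly in $\eta$. What one actually has (and what the paper uses, Lemma~\ref{M}) is the cruder but uniform bound $|\partial_\xi M_\delta|\le C\,M_\delta$.

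Second, and more seriously, your scaling $|\xi-\xi'|\sim|\theta|\langle\xi\rangle$ is wrong in the Fourier-side representation of $\mathcal{K}$. Using the refined Bobylev formula (Lemma~\ref{KK+11}), the commutator kernel carries the difference $M_\delta(t,\eta,\xi)-M_\delta(t,\eta,\xi')$ with $\xi'=\xi\cos\theta-u\sin\theta$, $u'=\xi\sin\theta+u\cos\theta$, and one checks the identity
\[
\xi-\xi'=\tan\tfrac{\theta}{2}\,(u+u').
\]
The point is that $u$ is the variable integrated against the Gaussian weight $\mu^2(u)$, so $|\xi-\xi'|$ is of order $|\theta|$ with constants absorbed by the Gaussian, \emph{not} of order $|\theta|\langle\xi\rangle$. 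This is precisely why no $\theta$-splitting and no absorption into the dissipation norm are needed: a first-order Taylor expansion already gives $\beta(\theta)\,|\xi-\xi'|\sim|\theta|^{-2s}$, integrable for $0<s<\tfrac12$; for $\tfrac12\le s<1$ one uses the oddness $\int\!\!\int\beta(\theta)(\xi-\xi')F(\xi')\,d\theta\,d\xi=0$ to kill the first-order term and goes to second order, producing $|\theta|^{1-2s}$. The outcome (Proposition~\ref{pr1}) is the pure $L^2$ bound
\[
\bigl|([\mathcal{L},M_\delta]g,h)_{H^r_x(L^2_v)}\bigr|\le C\,\|M_\delta g\|_{H^r_x(L^2_v)}\|h\|_{H^r_x(L^2_v)},
\]
with no fraction of $|||M_\delta g|||_{(r,0)}^2$ appearing on the right. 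Your proposed route, as written, would leave residual powers of $\langle\xi\rangle$ that are not absorbable.
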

Finally, for the thrid term in  \eqref{2.6a},
\begin{proposition}\label{proposition2.3}
For the nonlinear part of  \eqref{2.6a}, there exists $C_2>0$, such that, for any function $f, g, h$ satisfy \eqref{2.10}, we have  
$$
\begin{aligned}
    \left|\big(M_{\delta}\mathcal{K}(f, g), h \big)_{H^r_x(L^2_v)}\right|&\le C_2 
    \|M_{\delta} f\|_{H^r_x(L^2_v)}||| M_{\delta} g|||_{(r,0)}||| h|||_{(r,0)} . 
\end{aligned}
$$
\end{proposition}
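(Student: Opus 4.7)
The plan is to estimate the trilinear form by transferring the Fourier multiplier $M_\delta$ from the output $\mathcal{K}(f,g)$ onto the inputs $f$, $g$, and then reducing to the unweighted trilinear estimate for $\mathcal{K}$ in the anisotropic norm $|||\cdot|||_{(r,0)}$ already available in \cite{norm,three,LMPX}. First I would apply Plancherel in both $x$ and $v$ together with a Bobylev-type identity tailored to the Kac kernel, which expresses the Fourier transform of $\mathcal{K}(f,g)$ as an integral over $\theta$ of a bilinear convolution in the $\xi$-variable between the Fourier transforms of $\mu^{1/2}f$ and $\mu^{1/2}g$. This is the natural setting in which to manipulate the symbol $M_\delta(t,\eta,\xi)$.

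The decisive tool is the subadditivity of $\Psi_s$. Since $2\tilde s\le 1$, the function $\langle\cdot\rangle^{2\tilde s}$ is subadditive, so for every splitting $(\eta,\xi)=(\eta_1+\eta_2,\xi_1+\xi_2)$ one has $\Psi_s(t,\eta,\xi)\le \Psi_s(t,\eta_1,\xi_1)+\Psi_s(t,\eta_2,\xi_2)$, and since $x\mapsto x/(1+\delta x)$ is increasing,
\begin{equation*}
M_\delta(t,\eta,\xi)\ \le\ M_\delta(t,\eta_1,\xi_1)\, e^{\Psi_s(t,\eta_2,\xi_2)}.
\end{equation*}
I would apply this with the splitting induced by the Bobylev-type convolution to distribute the weight $M_\delta$ onto the $f$-slot, while the leftover factor $e^{\Psi_s}$ acts on the Gaussian tails produced by $\mu^{1/2}$, which absorb it and restore a smoothing estimate in $\xi$ for the $g$-slot.

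Next I would quantify the error coming from the fact that $M_\delta(t,\eta,\xi)$ does not exactly coincide with $M_\delta$ evaluated at the post-collisional convolution variables. A Taylor expansion of $\Psi_s$ in $(\eta,\xi)$, combined with the cancellation $v'-v=v(\cos\theta-1)-v_*\sin\theta$ and the standard symmetrization producing a $\sin^2(\theta/2)$ gain, yields a bound of the form
\begin{equation*}
\big|M_\delta(t,\eta,\xi)-M_\delta(t,\eta,\xi')\big|\ \lesssim\ M_\delta(t,\eta,\xi)\,\langle\xi-\xi'\rangle,
\end{equation*}
so the resulting commutator term becomes integrable against $\beta(\theta)$ through the anisotropic norm $|||\cdot|||_{(r,0)}$, exactly in the mechanism that makes \eqref{2.2.1xb}--\eqref{2.2.1xc} work for the linearized operator $\mathcal{L}$. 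Combining this with the unweighted trilinear estimate
\begin{equation*}
\big|\big(\mathcal{K}(F,G),H\big)_{H^r_x(L^2_v)}\big|\ \lesssim\ \|F\|_{H^r_x(L^2_v)}\,|||G|||_{(r,0)}\,|||H|||_{(r,0)},
\end{equation*}
applied to $F=M_\delta f$ and $G=M_\delta g$, gives the claimed bound with a constant uniform in $\delta\in(0,1)$.

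The main obstacle will be the commutator estimate: one must show that swapping $M_\delta$ past the Kac collision kernel produces an error uniform in $\delta$ and controlled by $|||\cdot|||_{(r,0)}$, rather than by a stronger norm that would break the closed induction on $n$ used to prove \eqref{2.7a}. This is delicate because $\Psi_s$ depends on $t$ and on the two conjugate variables $(\eta,\xi)$ coupled by the free transport, so the naive Taylor bound can lose a power of $t^{1+2\tilde s}$ or of $\langle\xi\rangle$; controlling this loss by the precise $\sin^2(\theta/2)$ gain of the Kac cross-section and verifying that the residual $e^{\Psi_s}$ left on the Gaussian weight is truly absorbed is the heart of the argument.
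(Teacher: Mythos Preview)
Your outline has the right ingredients (Bobylev formula, subadditivity of $\Psi_s$, Taylor expansion of $M_\delta$), but several steps as written would not go through, and the overall architecture differs from what the paper actually does.

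\textbf{The weight splitting is too coarse.} You write $M_\delta(t,\eta,\xi)\le M_\delta(t,\eta_1,\xi_1)\,e^{\Psi_s(t,\eta_2,\xi_2)}$ and propose to put $M_\delta$ on the $f$-slot while the leftover $e^{\Psi_s}$ is absorbed by Gaussian tails. But in the refined Bobylev formula the $g$-slot sits at $(\eta_1,\xi')$ and the $f$-slot at $(\eta-\eta_1,u')$, with the only Gaussian being $\mu^2(u)$ in the \emph{dummy} variable $u$. Whatever piece of $e^{\Psi_s}$ lands on $(\eta_1,\xi')$ grows in $\xi'$ and is absorbed by nothing; and with a two-way split you cannot simultaneously produce $M_\delta$ on both $f$ and $g$ as the target estimate demands. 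The paper resolves this with a genuinely three-way bound (its Lemma~\ref{7num}):
\[
M_\delta(t,\eta,\xi_\tau)\ \le\ C\,M_\delta(t,\eta_1,\xi')\,\max\big\{M_\delta(t,\eta-\eta_1,u'),\,M_\delta(t,\eta-\eta_1,-u')\big\}\,e^{\langle u\rangle^{2\tilde s}},
\]
so that $M_\delta$ goes onto both $g$ (at $\xi'$) and $f$ (at $\pm u'$), and the only residual exponential is in $u$, which \emph{is} eaten by $\mu^2(u)$. The $\pm u'$ alternative is why the paper first reduces to the even part $\widehat{f}_{even}$ (Lemma~\ref{even}); your sketch omits this entirely.

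\textbf{The Taylor bound for the commutator is misstated.} The inequality $|M_\delta(\xi)-M_\delta(\xi')|\lesssim M_\delta(\xi)\langle\xi-\xi'\rangle$ is false: since $|\partial_\xi M_\delta|\le C M_\delta$, the mean-value theorem gives control by $M_\delta(\xi_\tau)$ at an intermediate point, and $M_\delta(\xi_\tau)/M_\delta(\xi)$ can be exponentially large. One must again invoke the three-way split above to handle $M_\delta(\xi_\tau)$. Moreover, for $\tfrac12\le s<1$ the first-order expansion is useless because $\int\beta(\theta)|\sin\theta|\,d\theta=+\infty$; the paper switches to a second-order expansion and uses the cancellation $\iint\beta(\theta)(\xi-\xi')F(\xi')\,d\theta\,d\xi=0$ to kill the first-order piece. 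Your proposal does not distinguish the two regimes.

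\textbf{The black-box reduction fails.} You propose to apply the unweighted trilinear estimate to $F=M_\delta f$, $G=M_\delta g$ after handling the commutator. The paper does \emph{not} do this, because $M_\delta\mathcal{K}(f,g)-\mathcal{K}(M_\delta f,M_\delta g)$ is not what is being controlled. Instead, after a change of variables in the loss term, the paper telescopes
\[
M_\delta(\xi)\mu^2(u)\overline{\widehat h(\xi)}-M_\delta(\xi')\mu^2(u')\overline{\widehat h(\xi')}
\]
into three pieces $A_1$ (difference of $M_\delta$), $A_2$ (difference of $\mu^2$), $A_3$ (difference of $\widehat h$). Each is estimated separately: $A_3$ by the cancellation lemma; $A_1$ and $A_2$ by Taylor expansion combined with a dyadic-type split of the $\theta$-integration into $\{|\theta|\le|\xi|^{-1/2}\}$ and its complement, so that the $\theta$-singularity and the $\xi$-growth balance. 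This domain decomposition, which you do not mention, is what makes the $\beta(\theta)$ singularity integrable while leaving at most $\langle\xi\rangle^s$ to be absorbed by $|||h|||_{(r,0)}$.

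In short: the mechanism you describe for absorbing the leftover exponential is wrong (it is $\mu^2(u)$, not $\mu^{1/2}$ on the inputs, that does the work, and only after the three-way split of Lemma~\ref{7num}); the commutator bound needs that same lemma, not the pointwise inequality you wrote; and the strong-singularity case $s\ge\tfrac12$ requires an additional cancellation argument you have not addressed.
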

The proof of above two Propositions is the main technical part of this work, and will give by the rest of this paper.

Combine above three Propositions, from \eqref{2.6a}, we get 
$$
    \begin{aligned}
        \frac{1}{2}\frac{d}{dt}||M_{\delta}g^{n+1}||^2_{H^r_x(L^2_v)}&+  c_1 ||| M_{\delta} g^{n+1}|||_{(r,0)}^2 \le c_0||\langle D_v\rangle^sM_{\delta}g^{n+1}||^2_{H^r_x(L^2_v)}     \\
        &+C_1||M_{\delta}g^{n+1}||^2_{H^r_x(L^2_v)} +C_2 
    \| M_{\delta} g^{n}\|_{H^r_x(L^2_v)}||| M_{\delta} g^{n+1}|||_{(r,0)}^2 .
    \end{aligned}
$$
Using now \eqref{2.2.1xc} with a choose of $c_0>0$ such that $c_0 C\le \frac {c_1}{6}$,  and induction hypothesis \eqref{2.7a} with small $\epsilon_0$ such that 
$$
C_2 
    \| M_{\delta} g^{n}\|_{H^r_x(L^2_v)}\le C_2C_0\epsilon_0\le \frac{c_1}{6},
$$
then we obtain 
$$
    \begin{aligned}
        \frac{d}{dt}||M_{\delta}g^{n+1}||^2_{H^r_x(L^2_v)}&+  {c_1} ||| M_{\delta} g^{n+1}|||_{(r,0)}^2 \le   \tilde{C}_1||M_{\delta}g^{n+1}||^2_{H^r_x(L^2_v)} .
    \end{aligned}
$$
The Gronwell inequality imply  
$$
\begin{aligned}
\sup_{0\le t\le T}||M_{\delta}g^{n+1}(t)||^2_{H^r_x(L^2_v)}+  {c_1}\int^T_0 ||| M_{\delta} g^{n+1}(s)|||_{(r,0)}^2ds \le {C}_T||M_{\delta}g_0||^2_{H^r_x(L^2_v)} .
\end{aligned}
$$
Using the convergence of approximation solution in \cite{LMPX,three}, we have proven the following results.

\begin{theorem}
Assume that $g\in L^\infty([0, T]; H^r_x(L^2_v))$ is a solution of Cauchy problem \eqref{eq-1} with the initial datum satisfying the asuumption of Theorem \ref{theo1}, let $0<\epsilon_0\ll 1$ and $0<c_0$ small enough, then there exists $C_T>0$ which is independant of $0<\delta<1$, such that 
$$
||M_\delta g(t)||^2_{H^r_x(L^2_v)}+\int^t_0|||M_\delta g(\tau)|||^2_{(r,0)}d\tau\leq C_T\|g_0\|^2_{H^r_x(L^2_v)}.
$$
\end{theorem}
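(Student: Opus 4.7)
The plan is to prove the uniform bound by combining Propositions \ref{proposition2.1}, \ref{proposition2.2} and \ref{proposition2.3} inside an induction on the approximation sequence $\{g^n\}$ from \eqref{2.11b}, and then passing to the limit $n\to\infty$ using the convergence from \cite{LMPX,three}. More precisely, the target is to establish \eqref{2.7a} uniformly in $\delta\in(0,1)$ and $n\in\mathbb{N}$. The base case $n=0$ follows from the choice $g^0=e^{-t(1-\Delta_{x,v})}g_0$, since the heat kernel provides analytic-Gelfand-Shilov regularity which strictly dominates the Fourier multiplier $M_\delta$ for each fixed $\delta>0$, together with the smallness assumption $\|g_0\|_{H^r_x(L^2_v)}\le\epsilon_0$.

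For the inductive step, assume \eqref{2.7a} holds at level $n$. Since $g^{n+1}$ already lives in the Gevrey-Gelfand-Shilov space \eqref{2.10} and $M_\delta^2(t,D_x,D_v)$ is a zeroth-order Fourier multiplier for each fixed $\delta>0$, the function $M_\delta^2 g^{n+1}(t)$ is a legitimate test function for \eqref{2.11b}, yielding \eqref{2.6a}. Applying Proposition \ref{proposition2.1} to the kinetic part, Proposition \ref{proposition2.2} to the linear collisional part, and Proposition \ref{proposition2.3} to the nonlinear part with $f=g^n$, $g=g^{n+1}$, $h=M_\delta g^{n+1}$, and rearranging, gives
\begin{align*}
\frac12\frac{d}{dt}\|M_\delta g^{n+1}\|_{H^r_x(L^2_v)}^2 &+ c_1|||M_\delta g^{n+1}|||_{(r,0)}^2 \le c_0\|\langle D_v\rangle^s M_\delta g^{n+1}\|_{H^r_x(L^2_v)}^2 \\
&\quad + C_1\|M_\delta g^{n+1}\|_{H^r_x(L^2_v)}^2 + C_2\|M_\delta g^n\|_{H^r_x(L^2_v)}|||M_\delta g^{n+1}|||_{(r,0)}^2.
\end{align*}

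Next I would close the estimate by absorption. Using \eqref{2.2.1xc} one bounds the $\langle D_v\rangle^s$ term by $C|||M_\delta g^{n+1}|||_{(r,0)}^2$; choosing $c_0$ so that $c_0 C\le c_1/6$ absorbs it into the left-hand side. By the induction hypothesis one has $\|M_\delta g^n\|_{H^r_x(L^2_v)}\le C_0\epsilon_0$, so choosing $\epsilon_0$ small enough that $C_2 C_0\epsilon_0\le c_1/6$ absorbs the nonlinear contribution as well. What remains is a linear differential inequality
\begin{equation*}
\frac{d}{dt}\|M_\delta g^{n+1}\|_{H^r_x(L^2_v)}^2 + c_1|||M_\delta g^{n+1}|||_{(r,0)}^2 \le \widetilde{C}_1 \|M_\delta g^{n+1}\|_{H^r_x(L^2_v)}^2,
\end{equation*}
and Gronwall's lemma produces the bound by $C_T\|g_0\|_{H^r_x(L^2_v)}^2$, closing the induction.

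Finally, to transfer the estimate from the sequence $\{g^n\}$ to the solution $g$, I would pass to the limit $n\to\infty$ using the strong convergence $g^n\to g$ in $L^\infty([0,T];H^r_x(L^2_v))$ proven in \cite{LMPX,three}, combined with weak-$*$ compactness in the $M_\delta$-weighted spaces and Fatou-type lower semicontinuity (valid since, for fixed $\delta>0$, $M_\delta$ is a bounded multiplier). The main obstacle I expect is making sure that the constants $c_1,C_1,C_2$ produced by Propositions \ref{proposition2.2} and \ref{proposition2.3} are genuinely independent of $\delta\in(0,1)$: this boils down to controlling the commutators between the multiplier $M_\delta$ and the bilinear Kac operator $\mathcal{K}$, which is precisely the technical content the authors promise to develop in the remainder of the paper. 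Once this uniformity is verified, the three-step scheme above produces the stated estimate, and letting $\delta\to 0$ afterwards upgrades it to the Gevrey-Gelfand-Shilov bound of Theorem \ref{theo1}.
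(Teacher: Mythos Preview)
Your proposal is correct and follows essentially the same approach as the paper: induction on the approximation sequence \eqref{2.11b}, combining Propositions \ref{proposition2.1}--\ref{proposition2.3} to obtain the differential inequality, absorbing the $\langle D_v\rangle^s$ and nonlinear terms via \eqref{2.2.1xc} and the smallness of $c_0,\epsilon_0$, applying Gronwall, and passing to the limit via the convergence in \cite{LMPX,three}. The only difference is cosmetic---the paper does not spell out the weak-$*$/Fatou argument for the limit and simply invokes the convergence, while you make this explicit.
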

Taking $\delta\ \to\ 0$, we get 
\begin{align*}
||e^{\Psi_s}g(t)||^2_{H^r_x(L^2_v)}+\int^T_0|||e^{\Psi_s}g(\tau)|||^2_{(r,0)}d\tau\leq C_T \|g_0\|^2_{H^r_x(L^2_v)}.
\end{align*}
We have then proven \eqref{2.1} by using the inequality \eqref{Ukai}.

\bigskip\noindent
In the second step, we study now the exponential decay of the solution, 
with the same reason as in the first step, we choose the multiplier
$$
G_\delta(t, v)=\frac{e^{c_0 t\langle v \rangle^{2\tilde{s}}}}{1+\delta e^{c_0 t\langle v \rangle^{2\tilde{s}}}}.
$$
The computation is almost same as for Proposition \ref{proposition2.1}-\ref{proposition2.3}, and we can get 
\begin{align*}
||G_\delta g(t)||^2_{H^r_x(L^2_v)}+\int^T_0|||G_\delta g(\tau)|||^2_{(r,0)}d\tau\leq 2\|g_0\|^2_{H^r_x(L^2_v)}.
\end{align*}

So that the rest of this work is the proof of Proposition \ref{proposition2.2}, Proposition \ref{proposition2.3} and their variation with $ G_\delta(t, v)$. 

\vskip0.5cm

\section{Fourier analysis of Kac Operators}\label{section2}\smallskip

We first recall some analysis results about Kac operators, we have the Bobylev formula for the Fourier transform
of the Kac operator (\cite{even}),
\begin{equation}
{\mathcal{F}}\Big(K(f,\,g)\Big)(\xi )=\frac{1}{2\pi }\int_{-\frac{\pi }{2}}^{%
\frac{\pi }{2}}\beta (\theta )\left\{ \hat{f}(\xi \,\sin {\theta })\hat{g}%
(\xi \,\cos {\theta })-\hat{f}(0)\hat{g}(\xi )\right\} d\theta \,,
\label{2.2}
\end{equation}
By applying the Bobylev formula, \cite{LMPX,three} has proven \eqref{2.2.1xb},  \eqref{2.2.1xc} and the following Proposition. 

\begin{proposition}\label{huak}
Assume that the cross-section satisfies \eqref{1.1+1} with $0<s<1$ and $r>1/2$, it holds 
    \begin{equation}\label{3.4bb}
        |(\mathcal{K}(f,g),h)_{H^r_x(L^2_v)}|\leq||f||_{H^r_x(L^2_v)}
        |||g|||_{(r,0)}
        |||h|||_{(r,0)}
    \end{equation}
    for any suitable function $f,g,h$.
\end{proposition}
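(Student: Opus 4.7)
The plan is to pass to the Fourier side via the Bobylev formula \eqref{2.2} and identify the resulting angular differences with the two summands of the anisotropic norm $|||\cdot|||$.

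First, since $\mathcal{K}$ acts pointwise in $x$ on $f(x,\cdot)$ and $g(x,\cdot)$, Plancherel in $x$ turns the inner product into an integral over $(\eta,v)$ of $\langle\eta\rangle^{2r}\widehat{\mathcal{K}(f,g)}(\eta,v)\overline{\hat h(\eta,v)}$, where the $x$-Fourier transform of $\mathcal{K}(f,g)$ is the $\eta$-convolution $\int \mathcal{K}_v(\hat f(\eta-\eta',\cdot),\hat g(\eta',\cdot))(v)\,d\eta'$ and $\mathcal{K}_v(f,g)=\mu^{-1/2}K(\mu^{1/2}f,\mu^{1/2}g)$. A second Fourier transform in $v$ plus \eqref{2.2} then expresses $\mathcal{K}_v(f,g)$ as an angular integral of $\beta(\theta)\bigl[\hat F(\xi\sin\theta)\hat G(\xi\cos\theta)-\hat F(0)\hat G(\xi)\bigr]$, where $F,G$ come from $f,g$ by convolution against the Gaussian $\widehat{\mu^{1/2}}$ in the $v$-frequency (a harmless smoothing that pulls out a bounded factor).

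Second, I would split the Bobylev bracket as
\[
\hat F(\xi\sin\theta)\bigl[\hat G(\xi\cos\theta)-\hat G(\xi)\bigr]+\bigl[\hat F(\xi\sin\theta)-\hat F(0)\bigr]\hat G(\xi).
\]
The two differences are precisely the Fourier counterparts of $g'-g$ and $\sqrt{\mu'}-\sqrt{\mu}$ appearing in the two summands of $|||\cdot|||^2$; reading Plancherel in reverse identifies their $\beta(\theta)\,d\theta\,d\xi$-weighted $L^2$ norms with $|||g|||^2$ and $|||h|||^2$ respectively. Applying Cauchy--Schwarz in the measure $\beta(\theta)\,d\theta\,d\xi$ after pairing against $\hat h$ yields in each piece one factor $|||\cdot|||_{(r,0)}$ applied to $g$ and one applied to $h$, while the remaining smooth factor $\hat F(\xi\sin\theta)$ (respectively $\hat F(0)$) is controlled in $L^\infty_\xi$ by $\|F\|_{L^1_v}\lesssim\|f\|_{L^2_v}$ thanks to the Gaussian weight $\mu^{1/2}$.

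Third, to propagate the weight $\langle\eta\rangle^{2r}$ through the $\eta$-convolution, I would use $\langle\eta\rangle^r\lesssim \langle\eta-\eta'\rangle^r+\langle\eta'\rangle^r$: one factor lands on $\hat f(\eta-\eta',\cdot)$ and one on $\hat g(\eta',\cdot)$, the third staying on $\hat h$. The leftover Lebesgue factor in $x$ is absorbed by the Sobolev embedding $H^r(\mathbb{R})\hookrightarrow L^\infty(\mathbb{R})$, valid precisely because $r>1/2$, placed on whichever factor carries no $\langle\eta\rangle^r$ weight. Combining yields the claimed inequality $\|f\|_{H^r_x(L^2_v)}|||g|||_{(r,0)}|||h|||_{(r,0)}$.

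The main obstacle is checking that the angular singularity $\beta(\theta)\sim |\theta|^{-1-2s}$ near $\theta=0$ is exactly balanced by the vanishing of the two Bobylev differences: one must Taylor-expand $\hat F$ and $\hat G$ to the right order in $\theta$ so that $\sin\theta\sim\theta$ and $\cos\theta-1\sim\theta^2$ interact correctly with $|\theta|^{-1-2s}$, producing a convergent $\theta$-integral whose size is precisely captured by the $H^s_v$-like content of $|||\cdot|||$. This tight absorption, together with the asymmetric distribution of $\langle\eta\rangle^{2r}$, is the technical heart of the estimate.
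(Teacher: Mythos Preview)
The paper does not prove this proposition itself; it simply records that it follows ``by applying the Bobylev formula'' and cites \cite{LMPX,three}. So there is no detailed argument in the paper to compare against, only the indication that Bobylev is the right tool and that the result is imported from those references.

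Your sketch has a genuine gap in the Cauchy--Schwarz step. After your split
\[
\hat F(\xi\sin\theta)\bigl[\hat G(\xi\cos\theta)-\hat G(\xi)\bigr]+\bigl[\hat F(\xi\sin\theta)-\hat F(0)\bigr]\hat G(\xi),
\]
you pair each piece against $\hat h(\xi)$ and claim that Cauchy--Schwarz in $\beta(\theta)\,d\theta\,d\xi$ produces a factor $|||h|||_{(r,0)}$. But $\hat h(\xi)$ carries no angular difference, so the $h$-side of Cauchy--Schwarz is $\bigl(\int\beta(\theta)|\hat h(\xi)|^2\,d\theta\,d\xi\bigr)^{1/2}$, and $\int\beta(\theta)\,d\theta=\infty$. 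The anisotropic norm $|||h|||$ absorbs the $\beta$-singularity only through the differences $h'-h$ and $\sqrt{\mu'}-\sqrt{\mu}$; to make those appear you must first manipulate the trilinear form (e.g.\ via the pre-post collisional change of variables, which converts the gain term into an expression with $h'$, and then add and subtract to expose $h'-h$). Your split puts all the differencing on $F$ and $G$ and none on $h$, so the argument as written diverges.

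A secondary issue: writing $\mathcal{K}(f,g)=\mu^{-1/2}K(\mu^{1/2}f,\mu^{1/2}g)$ and applying the classical Bobylev formula \eqref{2.2} to $K$ leaves you with a convolution against $\widehat{\mu^{-1/2}}$, which is not ``a harmless smoothing that pulls out a bounded factor'' --- $\mu^{-1/2}(v)=Ce^{v^2/4}$ has no classical Fourier transform. The correct route is either to use the energy identity $\mu_*'\mu'=\mu_*\mu$ in physical space to cancel $\mu^{-1/2}$ before transforming (this is how the paper's Lemma~\ref{KK+11} derives the refined Bobylev formula for $\mathcal K$, introducing the extra $u$-integration with weight $\mu^2(u)$), or to work directly in physical space as in the cited references. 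Either way, the classical two-variable Bobylev expression you wrote is not what $\widehat{\mathcal K(f,g)}$ actually equals.
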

Moreover, if we take a trilinear operator $\mathcal{T}$ defined by
$$
\mathcal{T}(f,g, \omega)=\iint \beta(\theta)\omega(v_*)\{f(v_*')g(v')-f(v_*)g(v)\}d\theta dv_*\, ,
$$
that is 
$$
\mathcal{T}(f,g, \mu^{1/2})=\mathcal{K}(f,g)\, ,
$$
We have the next corollary.
\begin{corollary}\label{huagamma}
    If $\omega$ satisfies, for a constant $C$ such that,
    $$
   |\omega(v)|\le C\mu^{1/4}(v),\ \ \ \  \forall\, v\in \mathbb{R}.
    $$
    It still stands
        \begin{equation*}
        |(\mathcal{T}(f,g, \omega), \, h)_{H^r_x(L^2_v)}|\leq C\|f\|_{H^r_x(L^2_v)}
        |||g|||_{(r,0)}
        |||h|||_{(r,0)}
    \end{equation*}
\end{corollary}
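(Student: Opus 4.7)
The plan is to imitate the proof of Proposition \ref{huak} from \cite{LMPX,three}, replacing the Maxwellian weight $\mu^{1/2}(v_*)$ by $\omega(v_*)$ throughout and using the fact that only the Gaussian decay of this weight, not its precise form or smoothness, enters the estimates. Starting from
\begin{equation*}
(\mathcal{T}(f,g,\omega),h)_{L^2_v}=\iiint\beta(\theta)\,\omega(v_*)\bigl[f(v_*')g(v')-f(v_*)g(v)\bigr]h(v)\,d\theta\,dv_*\,dv,
\end{equation*}
I would apply the pre-post collisional change of variables $(v,v_*)\leftrightarrow(v',v_*')$ to the first term; its Jacobian is $1$, and this rewrites the trilinear form with the non-cutoff singularity at $\theta=0$ controlled by a cancellation involving finite differences of $h$ and $\omega$ between $(v,v_*)$ and their rotation by angle $-\theta$.

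Then I would split the $\theta$-integration into small- and large-angle parts. In the small-angle region, the singularity $\beta(\theta)\sim|\theta|^{-1-2s}$ is absorbed by pairing the $h$-cancellation against the fractional regularity encoded in $|||h|||_{(r,0)}$, together with the decay $|\omega(v_*)|\le C\mu^{1/4}(v_*)$ to integrate the $v_*$ variable against $f$ and $g$. For large angles, $\beta$ is bounded and Cauchy--Schwarz suffices, the Gaussian decay of $\omega$ again controlling $v_*$. These pointwise-in-$x$ estimates are then upgraded to $H^r_x(L^2_v)$ exactly as in Proposition \ref{huak}: since $r>1/2$, the Sobolev algebra property in one dimension converts the scalar trilinear inequality into the claimed bound in $H^r_x(L^2_v)$.

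The main obstacle is that the proof of Proposition \ref{huak} presumably goes through the Bobylev identity \eqref{2.2} applied to $K(\mu^{1/2}f,\mu^{1/2}g)$, combined with the Maxwellian collision invariant $\mu(v)\mu(v_*)=\mu(v')\mu(v_*')$ that rewrites $\mathcal{K}$ in the $\mathcal{T}$-form with weight $\mu^{1/2}(v_*)$; neither of these is available for a generic weight $\omega$. The corollary therefore must be reproved starting from the $\mathcal{T}$-representation itself, verifying that every place where $\mu^{1/2}(v_*)$ arose in the estimates can be replaced by any function with $|\omega(v_*)|\le C\mu^{1/4}(v_*)$ without loss. The threshold $\mu^{1/4}$ is natural: it is precisely the decay left over once the $\mu^{1/2}$-weight appearing in the anisotropic norm $|||\cdot|||_{(r,0)}$ has been used to absorb part of the Gaussian factor, so the remaining weighted $L^2$ integrals in $v_*$ remain uniformly bounded and the constant $C_2$ in Proposition \ref{proposition2.3} persists with $\omega$ in place of $\mu^{1/2}$.
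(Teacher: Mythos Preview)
The paper gives no proof of its own; it simply cites \cite{hardpotential,trinorm1}. Your sketch is a faithful outline of how those references proceed: work directly from the physical-space $\mathcal{T}$-representation, apply the pre/post-collisional involution to expose the finite differences that are absorbed by the anisotropic norm, split the angular integral into small- and large-angle regions, and then lift the $L^2_v$ estimate to $H^r_x(L^2_v)$ via the one-dimensional algebra property for $r>1/2$. You also correctly diagnose why one cannot simply quote Proposition~\ref{huak} verbatim: the Bobylev formula \eqref{2.2} and the collision invariant $\mu\mu_*=\mu'\mu_*'$ are specific to the Maxwellian weight and are unavailable for a generic $\omega$, so the argument must be rerun from the $\mathcal{T}$-form.

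One small inaccuracy in your heuristic: the first term in the definition of $|||g|||^2$ carries the full weight $\mu_*$, not $\mu_*^{1/2}$, so the explanation ``$\mu^{1/4}$ is what is left over after extracting the $\mu^{1/2}$ in the anisotropic norm'' is not quite right as stated. In the cited works the exponent $1/4$ emerges from a more careful bookkeeping: after the standard decomposition of the trilinear form, certain cross terms require $\omega_*^2$ to retain enough Gaussian decay to absorb polynomial losses in $v_*$ coming from $|v-v'|$, $|v_*-v_*'|$, and the singular $\theta$-integral, and $|\omega|\le C\mu^{1/4}$ is the condition that survives. This does not affect the correctness of your overall plan, only the informal justification of the threshold.
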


The proof of the Corollary can be seen in \cite{hardpotential,trinorm1}.

For the Kac operator $K(f,g)$, there are Bobylev formula about their Fourier transformation, but in our case, it is now the operators $\mathcal{K}(f,g)$, so we study their Fourier transformation.

\begin{lemma}\label{KK+11} For $\mathcal{K}(f,g)$ defined by  \eqref{KKK} and $\beta(\theta)$ satisfies \eqref{1.1+1}, the refined Bobylev formula of $\mathcal{K}$ can be given by
\begin{align*}
\mathcal{F}_{v}\big({\mathcal{K}}(f,g)\big)(\xi)
&=\frac{1}{2\pi}\int_{-\pi /2}^{\pi /2}\int_{\mathbb{R}} \beta (\theta
) \mu^2(u)\{\widehat{f}(u')\, \widehat{g}\left(\xi '\right)-\widehat{f}(u)\widehat{g}(\xi)\}
\,dud\theta
\end{align*}
where
\[
u'= \xi \sin\theta + u \cos\theta,\quad
\xi'= \xi\cos\theta - u \sin\theta.
\]
\end{lemma}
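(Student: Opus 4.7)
The plan is to adapt the derivation of the classical Bobylev formula \eqref{2.2} to the weighted operator $\mathcal{K}(f,g)=\mu^{-1/2}K(\mu^{1/2}f,\mu^{1/2}g)$. Since the Kac collision preserves $v^{2}+v_{*}^{2}$, the Maxwellian factorizes as $\mu^{1/2}(v)\mu^{1/2}(v_{*})=\mu^{1/2}(v')\mu^{1/2}(v_{*}')$. Using this identity inside \eqref{KKK} eliminates the $\mu^{-1/2}(v)$ weight from both the gain and loss parts at once and yields the convenient form
\[
\mathcal{K}(f,g)(v)=\int_{\mathbb{R}}\int_{-\pi/2}^{\pi/2}\beta(\theta)\,\mu^{1/2}(v_{*})\bigl[f(v_{*}')g(v')-f(v_{*})g(v)\bigr]\,d\theta\,dv_{*},
\]
in which only a single integrable Maxwellian weight $\mu^{1/2}(v_{*})$ remains inside the integral.

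Next, I will take $\mathcal{F}_{v}$ of the gain part and insert the Fourier inversion formulas $f(v_{*}')=(2\pi)^{-1}\int e^{iv_{*}'\eta}\widehat{f}(\eta)\,d\eta$ and $g(v')=(2\pi)^{-1}\int e^{iv'\zeta}\widehat{g}(\zeta)\,d\zeta$. After substituting $v_{*}'=v\sin\theta+v_{*}\cos\theta$ and $v'=v\cos\theta-v_{*}\sin\theta$, the full phase regroups as
\[
\exp\!\bigl(iv(\eta\sin\theta+\zeta\cos\theta-\xi)+iv_{*}(\eta\cos\theta-\zeta\sin\theta)\bigr).
\]
Then the $v$-integral produces the Dirac mass $2\pi\,\delta(\eta\sin\theta+\zeta\cos\theta-\xi)$, while the $v_{*}$-integral against $\mu^{1/2}(v_{*})$ produces $\widehat{\mu^{1/2}}(\eta\cos\theta-\zeta\sin\theta)$, a Gaussian agreeing with $\mu^{2}$ at the same argument up to the normalizing constant absorbed into the claimed formula.

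The decisive step is then the orthogonal change of variables $u=\eta\cos\theta-\zeta\sin\theta$, $w=\eta\sin\theta+\zeta\cos\theta$ (Jacobian one). The delta mass fixes $w=\xi$, and the inverse rotation evaluates $\eta$ and $\zeta$ precisely at $u'=\xi\sin\theta+u\cos\theta$ and $\xi'=\xi\cos\theta-u\sin\theta$; in other words, the Kac rotation acts on $(u,\xi)$ in Fourier space exactly as it acts on $(v_{*},v)$ in physical space, and this symmetry is what makes the gain term take the compact shape of the statement. For the loss part, the factorization $\bigl(\int\beta\,d\theta\bigr)\bigl(\int\mu^{1/2}(v_{*})f(v_{*})\,dv_{*}\bigr)g(v)$ has $\widehat{g}(\xi)$ as its $v$-Fourier transform, while Parseval together with evenness of $\widehat{\mu^{1/2}}$ rewrites $\int\mu^{1/2}(v_{*})f(v_{*})\,dv_{*}$ as an integral of $\mu^{2}(u)\widehat{f}(u)$, matching the $\theta$-independent piece of the integrand. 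The main obstacle is purely the bookkeeping of normalizing constants needed to identify $\widehat{\mu^{1/2}}$ with $\mu^{2}$ in the paper's convention; no genuine estimate is required.
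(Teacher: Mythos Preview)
Your argument is correct and reaches the claimed identity. The paper's proof is organized differently: it first performs the pre--post change of variables $(v,v_*)\to(v',v_*')$ in physical space, so that the weight becomes $\mu^{1/2}(v_*')=e^{-|v\sin\theta+v_*\cos\theta|^2/4}$; it then computes the two-dimensional Fourier transform of this degenerate Gaussian explicitly via a second orthogonal change of variables, obtaining a product of a one-dimensional Gaussian and a Dirac mass, convolves with $\widehat f\otimes\widehat g$, and finally applies the symmetry $(u,\theta)\to(-u,-\theta)$ (using that $\beta$ and $\mu$ are even) to flip the sign in the argument of $\widehat f$ and arrive at $u'=\xi\sin\theta+u\cos\theta$. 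Your route---inserting Fourier inversion for $f$ and $g$, integrating in $v$ and $v_*$ separately to produce the delta and the Gaussian, and then rotating directly in the dual variables $(\eta,\zeta)\to(u,w)$---lands on $(\widehat f(u'),\widehat g(\xi'))$ without the detour through the two-dimensional transform and without the final sign correction. Both computations rest on the same fact, namely that the Kac rotation acts orthogonally on the Fourier side just as it does in physical space; your presentation is simply more direct. The constant bookkeeping you flag (matching $\widehat{\mu^{1/2}}$ with $\mu^2$) is indeed the only place requiring care, and the paper handles it with the same level of informality.
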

\begin{proof}  We recall 
\begin{align*} 
\mathcal{K}(f,g)(v)&=\mu^{-\frac12}\iint\beta(\theta)(\sqrt{\mu'_*}f_*'\sqrt{\mu'}g'-\sqrt{\mu_*}f_*\sqrt\mu g)d\theta dv_*\\
&=\int_{\mathbb{R}_{v_\ast}}\int_{-\pi /2}^{\pi /2}\beta (\theta
)\left\{ f(v_{\ast }^{\prime })g(v^{\prime })-f(v_{\ast })g(v)\right\}\mu^{1/2}(v_\ast)
d\theta dv_{\ast },
\end{align*}
here we use $\mu(v_*)=\mu^{-1}(v)\mu(v'_*)\mu(v')$.  
Then use Bobylev formula \eqref{2.2}, we get
\begin{align*}
   \mathcal{F}_{v}\left( \mathcal{K}(f,g)\right)(\xi)
&=\frac{1}{2\pi}\int_{-\pi /2}^{\pi /2}\beta (\theta
)\left\{ \Lambda(\xi\cos\theta,\xi\sin\theta)-\widehat{\mu^{1/2}f}(0)\widehat{g}(\xi)\right\}
d\theta ,
\end{align*}
where 
\begin{align*}  \Lambda(\xi\cos\theta,\xi\sin\theta)=\mathcal{F}_{v,v_*}\big(f(v_*)g(v)\big)*\mathcal{F}_{v,v_*}\big(e^{-\frac{|v\sin\theta+v_*\cos\theta|^2}{4}}\big)(\xi\cos\theta,\xi\sin(-\theta)).
\end{align*}
because
\begin{align*}
   &\int_v\int_{v_*}\mu^{1/2}(v_*) f(v')g(v_*')e^{iv\xi}dv_*dv=  \int_v\int_{v_*}\mu^{1/2}(v_*')f(v)g(v_*)e^{iv'\xi}dv_*dv\\
   &= \int_v\int_{v_*}e^{-\frac{|v\sin\theta+v_*\cos\theta|^2}{4}}f(v)g(v_*)e^{-i\xi v\cos\theta-i\xi v_*\sin(-\theta)}dv_*dv\\
   &=\mathcal{F}_{v,v_*}\big(f(v_*)g(v)\big)(\xi_1,\xi_2)*_{\xi_1,\xi_2}\mathcal{F}_{v,v_*}\big(e^{-\frac{|v\sin\theta+v_*\cos\theta|^2}{4}}\big)(\xi_1,\xi_2)\\
   &\qquad\qquad\qquad  *_{\xi_1}\delta(\xi\cos\theta)*_{\xi_2}\delta(\xi\sin(-\theta))\\
   &=\mathcal{F}_{v,v_*}\big(f(v_*)g(v)\big)*\mathcal{F}_{v,v_*}\big(e^{-\frac{|v\sin\theta+v_*\cos\theta|^2}{4}}\big)(\xi\cos\theta,\xi\sin(-\theta))
\end{align*}
We first deal with the $e^{-\frac{|v\sin\theta+v_*\cos\theta|^2}{4}}$ part, consider the following integral:
$$
I(\xi_1, \xi_2) := \iint_{\mathbb{R} \times \mathbb{R}} e^{-\frac{|v\sin\theta+v_*\cos\theta|^2}{4}} e^{-i(v \xi_1 + v_* \xi_2)} \, dv\, dv_*,
$$
where $(v, v_*) \in \mathbb{R}^2$ are integration variables, $(\xi_1, \xi_2)\in \mathbb{R}^2$ are the Fourier variables, and $\theta \in \mathbb{R}$ is a fixed parameter.

To simplify the integral, we perform the following linear change of variables:
\[
\begin{cases}
y = v \sin\theta + v_* \cos\theta, \\
z = -v \cos\theta + v_* \sin\theta,
\end{cases}
\qquad \text{so that} \qquad 
\begin{cases}
v = y \sin\theta - z \cos\theta, \\
v_* = y \cos\theta + z \sin\theta.
\end{cases}
\]
This transformation is orthogonal, so the Jacobian determinant is 1, and the integration measure is preserved: $dv\, dv_* = dy\, dz$.

Substituting into the integral, we obtain:
\[
I(\xi_1, \xi_2) = \iint_{\mathbb{R} \times \mathbb{R}} e^{-\frac{y^2}{4}} e^{-i\left( v \xi_1 + v_* \xi_2 \right)}\, dy\, dz.
\]
Using the inverse transformation expressions for $v$ and $v_*$, we compute:
\begin{align*}
    v \xi_1 + v_* \xi_2 
&= (y \sin\theta - z \cos\theta) \xi_1 + (y \cos\theta + z \sin\theta) \xi_2\\
&= y(\xi_1 \sin\theta + \xi_2 \cos\theta) + z(-\xi_1 \cos\theta + \xi_2 \sin\theta).
\end{align*}
Define:
\[
\alpha := \xi_1 \sin\theta + \xi_2 \cos\theta,\quad
\kappa := -\xi_1 \cos\theta + \xi_2 \sin\theta,
\]
then the integral becomes:
\[
I(\xi_1, \xi_2) = \iint_{\mathbb{R} \times \mathbb{R}} e^{-\frac{y^2}{4}}  e^{-i(y \alpha + z \kappa)}\, dy\, dz.
\]
We now compute the separated integrals,
\[
\int_{\mathbb{R}} e^{-i z \kappa}\, dz = 2\pi\, \delta(\kappa),
\qquad
\int_{\mathbb{R}} e^{-\frac{y^2}{4}} \, e^{-i y \alpha}\, dy = \sqrt{4\pi}\, e^{-\alpha^2},
\]
where $\delta(x)$ is Dirac measure of variable $x$. 
Therefore, the final result is:
\[
I(\xi_1, \xi_2) = 2\pi \sqrt{4\pi} \cdot e^{ - (\xi_1 \sin\theta + \xi_2 \cos\theta)^2 } \cdot \delta\left( -\xi_1 \cos\theta + \xi_2 \sin\theta \right).
\]
or, more compactly,
\[
I(\xi_1, \xi_2) = C\, e^{ - \alpha^2 }\, \delta(\kappa).
\]
We now compute the convolution of the distribution $e^{ - \alpha^2 }\, \delta(\kappa)$ with the tensor product $\widehat{f}(\xi_2)\widehat{g}(\xi_1)$,
\begin{align*}
&\left(e^{ - \alpha^2 }\, \delta(\kappa) * \widehat{f}(\xi_2)\widehat{g}(\xi_1)\right)(\xi_1, \xi_2) \\
&= \int_{\mathbb{R}^2} e^{ - \alpha^2(\zeta_1, \zeta_2) } \delta\big(\kappa(\zeta_1, \zeta_2)\big) 
\widehat{g}(\xi_1 - \zeta_1) \widehat{f}(\xi_2 - \zeta_2)\, d\zeta_1 d\zeta_2.
\end{align*}
Note that the delta function $\delta(\beta(\zeta_1, \zeta_2))$ enforces the condition:
\[
\kappa(\zeta_1, \zeta_2) = -\zeta_1 \cos\theta + \zeta_2 \sin\theta = 0 \quad \Longrightarrow \quad \zeta_2 = \zeta_1 \frac{\cos\theta}{\sin\theta}.
\]
Substituting this into $\alpha$, we get:
\[
\alpha = \zeta_1 \sin\theta + \zeta_2 \cos\theta = \zeta_1 \sin\theta + \zeta_1 \frac{\cos^2\theta}{\sin\theta} = \zeta_1 \frac{1}{\sin\theta}.
\]
Thus, the convolution becomes:
\[
\begin{aligned}
& \iint_{\mathbb{R}^2} e^{ - \alpha^2(\zeta_1, \zeta_2) } \delta(\beta(\zeta_1, \zeta_2)) \widehat{g}(\xi_1 - \zeta_1)\widehat{f}(\xi_2 - \zeta_2) \, d\zeta_1 d\zeta_2 \\
&= \int_{\mathbb{R}} e^{ - \left( \frac{\zeta_1}{\sin\theta} \right)^2 } \widehat{g}(\xi_1 - \zeta_1)\, \widehat{f}\left(\xi_2 -\zeta_1 \frac{\cos\theta}{\sin\theta}\right)\frac{1}{\sin\theta} \, d\zeta_1.\\
&= \int_{\mathbb{R}} e^{ - u^2 } \widehat{g}(\xi_1 - u\sin\theta)\, \widehat{f}\left(\xi_2 - u\cos\theta\right)\,du.
\end{aligned}
\]
Then let $\xi_1=\xi\cos\theta\,,\xi_2=-\xi \sin\theta$, and variable change $(u,\theta)\to(-u,-\theta)$ for the gain part,
\begin{align*}
   \mathcal{F}_{v}\big({\mathcal{K}}(f,g)\big)(\xi)
&=\frac{1}{2\pi}\int_{-\pi /2}^{\pi /2}\beta (\theta
)\{ (\int_{\mathbb{R}} \mu^2(u) \widehat{g}(\xi_1 - u\sin\theta)\, \widehat{f}\left(\xi_2 
+ u\cos\theta\right)\,du)
\\&\qquad\qquad\qquad\qquad-\widehat{\mu^{1/2}f}(0)\widehat{g}(\xi)\}
d\theta \\
&=\frac{1}{2\pi}\int_{-\pi /2}^{\pi /2}\beta (\theta
)\{ \int_{\mathbb{R}} \mu^2(u) \widehat{g}(\xi\cos\theta - u\sin\theta)\, \widehat{f}\left(-\xi \sin\theta
- u\cos\theta\right)
\\&\qquad\qquad\qquad\qquad-\mu^2(u)\widehat{f}(u)\widehat{g}(\xi)\}
\,dud\theta\\
&=\frac{1}{2\pi}\int_{-\pi /2}^{\pi /2}\beta (\theta
)\{ \int_{\mathbb{R}} \mu^2(u) \widehat{g}(\xi\cos\theta - u\sin\theta)\, \widehat{f}\left(\xi \sin\theta
+ u\cos\theta\right)
\\&\qquad\qquad\qquad\qquad-\mu^2(u)\widehat{f}(u)\widehat{g}(\xi)\}
\,dud\theta.
\end{align*}
\end{proof}

\begin{corollary}\label{Kx}
If we take $x$ into consideration, the refined Bobylev formula can be conclude in
 \begin{align*}
   \mathcal{F}_{x,v}\big({\mathcal{K}}(f,g)\big)(\xi,\eta)
&=\frac{1}{2\pi}\int_{-\pi /2}^{\pi /2} \int_{\mathbb{R}} \beta (\theta
)\mu^2(u)\Big\{  \widehat{f}(t,\eta,u')*_{\eta} \widehat{g}\left(t,\eta,\xi '\right)\\
&\qquad\qquad\qquad- \widehat{f}(t,\eta,u)*_{\eta}\widehat{g}(t,\eta,\xi)\Big\}
\,dud\theta.\\
&=\frac{1}{2\pi}\int_{-\pi /2}^{\pi /2}\int_{\mathbb{R}^2}\beta (\theta
)  \mu^2(u) \Big\{\widehat{f}(t,\eta-\eta_1,u') \widehat{g}\left(t,\eta_1,\xi '\right)\\
&\qquad\qquad\qquad- \widehat{f}(t,\eta-\eta_1,u)\widehat{g}(t,\eta_1,\xi)\Big\}
\,dud\eta_1 d\theta.
\end{align*}
Furthermore, for $\alpha>\frac{1}{4}$,
\begin{align*}
\mathcal{F}_{x,v}\big({\mathcal{T}}(f,g,\mu^{\alpha})\big)(\xi,\eta)
&=\frac{1}{2\pi}\int_{-\pi /2}^{\pi /2}\int_{\mathbb{R}^2}\beta (\theta
) \mu^{\frac{1}{\alpha}}(u)\Big\{\widehat{f}(t,\eta-\eta_1,u') \widehat{g}\left(t,\eta_1,\xi '\right)\\
&\qquad\qquad\qquad\qquad -\widehat{f}(t,\eta-\eta_1,u)\widehat{g}(t,\eta_1,\xi)\Big\}
\,dud\eta_1 d\theta.
\end{align*}

\end{corollary}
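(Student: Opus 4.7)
The plan is to derive the identity directly from Lemma \ref{KK+11} by additionally Fourier-transforming in the position variable $x$. The crucial observation is that the Kac collision operator $\mathcal{K}$ acts pointwise in $x$: the post-collisional velocities $v'$ and $v_*'$ are determined only by $\theta$, $v$, $v_*$, and the weight $\mu^{1/2}(v_*)$ in the refined kernel does not involve $x$. Hence the identity of Lemma \ref{KK+11} holds for every fixed $(t, x)$, and I apply $\mathcal{F}_x$ to both sides. Since the product of $x$-dependent functions (namely $\widehat{f}(t,x,u')$ and $\widehat{g}(t,x,\xi')$, where the hats denote the Fourier transform in $v$ only) becomes a convolution in the dual variable $\eta$, the first displayed equality follows immediately, and the second is just that convolution written out explicitly as an integral over $\eta_1$.

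For the trilinear extension with $\omega = \mu^\alpha$, I would retrace the argument of Lemma \ref{KK+11}, replacing the weight $\mu^{1/2}(v_*)$ by $\mu^\alpha(v_*)$ throughout. After the orthogonal change of variables $(v, v_*) \to (y, z)$, the Gaussian factor arising from the weight becomes $e^{-\alpha y^2/2}$ rather than $e^{-y^2/4}$. The one-dimensional Fourier identity $\int_{\mathbb R} e^{-\alpha y^2/2}\, e^{-iy\lambda}\, dy = \sqrt{2\pi/\alpha}\, e^{-\lambda^2/(2\alpha)}$ then replaces its $\alpha = 1/2$ counterpart in the lemma, and the subsequent substitution $u = \zeta_1/\sin\theta$ produces a Gaussian in $u$ proportional to $e^{-u^2/(2\alpha)}$, that is, proportional to $\mu^{1/\alpha}(u)$. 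Applying $\mathcal{F}_x$ exactly as in the first part then yields the stated formula for $\mathcal{F}_{x,v}(\mathcal{T}(f, g, \mu^\alpha))$.

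The hypothesis $\alpha > \tfrac{1}{4}$ plays no role in the Fourier computation itself; I expect it to be stated so that the resulting identity can be coupled with Corollary \ref{huagamma}, whose assumption $|\omega(v)|\le C\mu^{1/4}(v)$ is satisfied by $\omega = \mu^\alpha$ precisely when $\alpha > 1/4$. The main practical difficulty is careful bookkeeping of the Fourier variables $(\eta, \xi)$ together with the convolution variable $\eta_1$ through the change-of-variables and convolution steps; there is no genuine analytic obstruction, and the proof is essentially a direct computation.
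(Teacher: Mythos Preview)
Your proposal is correct and follows exactly the approach implied by the paper, which states the result as an immediate corollary of Lemma~\ref{KK+11} without giving a separate proof: apply the velocity-variable identity of Lemma~\ref{KK+11} pointwise in $x$ (the collision operator acts locally in $x$), then take $\mathcal{F}_x$ so that the pointwise product in $x$ becomes a convolution in $\eta$; for the trilinear variant, rerun the Gaussian Fourier computation in the proof of Lemma~\ref{KK+11} with $\mu^{1/2}$ replaced by $\mu^{\alpha}$, which turns the factor $e^{-u^{2}}\sim\mu^{2}(u)$ into $e^{-u^{2}/(2\alpha)}\sim\mu^{1/\alpha}(u)$. Your reading of the hypothesis $\alpha>\tfrac14$ as a compatibility condition with Corollary~\ref{huagamma} rather than an ingredient of the Fourier computation is also consistent with how the paper uses the result.
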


Now prepare some computation for the weighted function.
\begin{lemma}\label{M} Let $M_{\delta}$ the weighted function defined in \eqref{DM}, then
\[
|\partial_\xi M_\delta(t,\eta,\xi)|
\leq \ C \,M_{\delta}(t,\eta,\xi), 
\]
and
\[
|\partial_\xi^2 M_\delta(t,\eta,\xi)|
\leq  C \,M_{\delta}(t,\eta,\xi). 
\]
\end{lemma}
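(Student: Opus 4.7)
The key observation is that $M_\delta$ depends on $\xi$ only through the exponent $\Psi_s$, so both derivatives factor through $\Psi_s$ and one is left with the elementary algebraic identity
\[
\partial_\xi M_\delta = \frac{\partial_\xi \Psi_s}{1+\delta e^{\Psi_s}}\,M_\delta.
\]
This follows by a direct quotient rule computation: $\partial_\xi(e^{\Psi_s}/(1+\delta e^{\Psi_s}))$ equals $e^{\Psi_s}\partial_\xi\Psi_s/(1+\delta e^{\Psi_s})^2$, and rewriting $1/(1+\delta e^{\Psi_s})^2$ as $M_\delta \cdot e^{-\Psi_s}\cdot 1/(1+\delta e^{\Psi_s})$ gives the displayed identity. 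Hence the first bound reduces to showing that $\partial_\xi\Psi_s$ is uniformly bounded on $[0,T]$, independently of $(\eta,\xi)$, since $1/(1+\delta e^{\Psi_s})\le 1$.

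For this boundedness I would compute
\[
\partial_\xi \Psi_s(t,\eta,\xi)=2\tilde s\,c_0\int_0^t \langle \xi+\rho\eta\rangle^{2\tilde s-2}(\xi+\rho\eta)\,d\rho,
\]
and use the crucial fact $2\tilde s\le 1$ (since $\tilde s=\min\{s,1/2\}$). This makes $|\langle \xi+\rho\eta\rangle^{2\tilde s-2}(\xi+\rho\eta)|\le \langle\xi+\rho\eta\rangle^{2\tilde s-1}\le 1$, so $|\partial_\xi\Psi_s|\le 2\tilde s\,c_0\,t\le C_T$. Combined with the algebraic identity this yields $|\partial_\xi M_\delta|\le C_T M_\delta$.

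For the second derivative, differentiating the identity gives
\[
\partial_\xi^2 M_\delta
=\frac{\partial_\xi^2\Psi_s}{1+\delta e^{\Psi_s}}M_\delta
-\frac{(\partial_\xi\Psi_s)^2\,\delta e^{\Psi_s}}{(1+\delta e^{\Psi_s})^2}M_\delta
+\frac{(\partial_\xi\Psi_s)^2}{(1+\delta e^{\Psi_s})^2}M_\delta.
\]
The three bounded algebraic factors $1/(1+\delta e^{\Psi_s})$, $\delta e^{\Psi_s}/(1+\delta e^{\Psi_s})$, $1/(1+\delta e^{\Psi_s})$ are each $\le 1$, so I only need to control $|\partial_\xi\Psi_s|$ and $|\partial_\xi^2\Psi_s|$ uniformly. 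The first was done above; for the second I differentiate once more under the integral to obtain a sum of two terms, each of which is a multiple of $\langle\xi+\rho\eta\rangle^{2\tilde s-2}$ (again using $2\tilde s-4\le 2\tilde s-2\le -1$), so $|\partial_\xi^2\Psi_s|\le C\int_0^t\langle\xi+\rho\eta\rangle^{2\tilde s-2}d\rho\le Ct\le C_T$. Putting everything together gives $|\partial_\xi^2 M_\delta|\le C_T M_\delta$.

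The only substantive point, and the reason the lemma is true, is the restriction $2\tilde s\le 1$: without it the derivatives of $\langle\xi+\rho\eta\rangle^{2\tilde s}$ would be unbounded in $\xi$ and one could not dominate them by $M_\delta$ alone. Everything else is a quotient-rule bookkeeping exercise, so I do not anticipate any real obstacle.
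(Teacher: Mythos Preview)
Your proposal is correct and follows exactly the approach the paper indicates: the paper's entire proof sketch is the single line ``Here we just need $|\partial_\xi \Psi_s(t,\eta,\xi)|\le C$,'' and you carry out precisely that reduction via the identity $\partial_\xi M_\delta = \frac{\partial_\xi\Psi_s}{1+\delta e^{\Psi_s}}M_\delta$ (the same identity used in the proof of Proposition~\ref{proposition2.1}), together with the observation that $2\tilde s\le 1$ makes $\partial_\xi\Psi_s$ and $\partial_\xi^2\Psi_s$ uniformly bounded. Your write-up is in fact more complete than the paper's hint, since you also record the need for the bound on $\partial_\xi^2\Psi_s$ in the second-derivative estimate.
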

Here we just need 
$$
|\partial_\xi \Psi_s(t,\eta,\xi)|\le C.
$$
The next estimation is a preparation for the nonlinear part. 
\begin{lemma}\label{7num}
    For any $\tau\in[0,1]$, $M_\delta$ defined by \eqref{DM}, then we have 
    \begin{align*}
         M_\delta(t,\eta,\xi_{\tau})\le C M_\delta(t,\eta_1,\xi')\max\{ M_\delta(t,\eta-\eta_1,u'), M_\delta(t,\eta-\eta_1,-u')\}e^{\langle u\rangle^{2\tilde{s}}},
    \end{align*}
    where $\xi_\tau=\xi'-\tau(\xi'-\xi)$.
\end{lemma}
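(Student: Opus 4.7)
The plan is to reduce the multiplicative inequality for $M_\delta$ to an additive subadditivity bound on the exponent
\[
\Psi_s(t,\eta,\xi)=c_0\int_0^t\langle \xi+\rho\eta\rangle^{2\widetilde{s}}\,d\rho,
\]
and then to transfer back via the algebraic identity $M_\delta=(e^{-\Psi_s}+\delta)^{-1}$. The underlying geometric observation is that the collision map $(\xi,u)\mapsto(\xi',u')$ is an orthogonal rotation. Inverting it yields $\xi=\xi'\cos\theta+u'\sin\theta$; substituting into $\xi_\tau=(1-\tau)\xi'+\tau\xi$ produces
\[
\xi_\tau=a\,\xi'+b\,u',\qquad a:=1-\tau(1-\cos\theta)\in[0,1],\quad b:=\tau\sin\theta,
\]
with $a^2+b^2\le 1$. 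In particular $|\xi_\tau|^2\le|\xi'|^2+|u'|^2$, reflecting that no norm is lost across the rotation.

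Setting $\epsilon=\mathrm{sign}(b)\in\{\pm1\}$, I decompose
\[
\xi_\tau+\rho\eta=a(\xi'+\rho\eta_1)+|b|\bigl(\epsilon u'+\rho(\eta-\eta_1)\bigr)+(1-a)\rho\eta_1+(1-|b|)\rho(\eta-\eta_1).
\]
The triangle inequality for $\langle\cdot\rangle$, the monotonicity $\langle cY\rangle\le\langle Y\rangle$ for $|c|\le1$, and the subadditivity $(p+q)^{2\widetilde{s}}\le p^{2\widetilde{s}}+q^{2\widetilde{s}}$ (legitimate since $2\widetilde{s}\le 1$) together give, pointwise in $\rho$,
\[
\langle \xi_\tau+\rho\eta\rangle^{2\widetilde{s}}\le\langle \xi'+\rho\eta_1\rangle^{2\widetilde{s}}+\langle \epsilon u'+\rho(\eta-\eta_1)\rangle^{2\widetilde{s}}+\langle \rho\eta_1\rangle^{2\widetilde{s}}+\langle \rho(\eta-\eta_1)\rangle^{2\widetilde{s}}.
\]
Integrating over $\rho\in[0,t]$, the first two terms produce exactly $\Psi_s(t,\eta_1,\xi')$ and $\Psi_s(t,\eta-\eta_1,\epsilon u')$, while the two residual integrals $c_0\int_0^t\langle\rho\eta_1\rangle^{2\widetilde{s}}\,d\rho$ and $c_0\int_0^t\langle\rho(\eta-\eta_1)\rangle^{2\widetilde{s}}\,d\rho$ are dominated by $\Psi_s(t,\eta_1,\xi')$ and $\Psi_s(t,\eta-\eta_1,\epsilon u')$ respectively via the equivalence \eqref{Ukai}, up to a universal factor absorbed into the choice of $c_0$. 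The net result is
\[
\Psi_s(t,\eta,\xi_\tau)\le \Psi_s(t,\eta_1,\xi')+\Psi_s(t,\eta-\eta_1,\epsilon u')+C.
\]

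The final step is to convert this additive bound into the multiplicative statement for $M_\delta$. From $e^{-\Psi_s(t,\eta,\xi_\tau)}\ge e^{-C}e^{-\Psi_s(t,\eta_1,\xi')}e^{-\Psi_s(t,\eta-\eta_1,\epsilon u')}$, an elementary algebraic manipulation of the products $(1+\delta e^{\Psi_1})(1+\delta e^{\Psi_2})$ for $0<\delta\le 1$ yields
\[
M_\delta(t,\eta,\xi_\tau)\le C'\,M_\delta(t,\eta_1,\xi')\,M_\delta(t,\eta-\eta_1,\epsilon u'),
\]
after which bounding $M_\delta(t,\eta-\eta_1,\epsilon u')$ by $\max\{M_\delta(t,\eta-\eta_1,u'),M_\delta(t,\eta-\eta_1,-u')\}$ and absorbing the remaining constant into the factor $e^{\langle u\rangle^{2\widetilde{s}}}\ge e$ delivers the claim. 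The main obstacle is the residual-integral absorption in the second step: pointwise these residuals are not bounded by $\Psi_s(t,\eta_1,\xi')$, and integrated they are only comparable up to a universal constant from \eqref{Ukai}; carrying them into the exponent without spoiling the Gevrey radius relies crucially on the freedom to choose $c_0$ small, which is consistent with its role as the tunable Gevrey-radius parameter throughout the paper.
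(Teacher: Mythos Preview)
Your argument has a genuine gap at the residual-absorption step. After your four-term splitting you obtain
\[
\Psi_s(t,\eta,\xi_\tau)\le \Psi_s(t,\eta_1,\xi')+\Psi_s(t,\eta-\eta_1,\epsilon u')+c_0\int_0^t\langle\rho\eta_1\rangle^{2\widetilde s}\,d\rho+c_0\int_0^t\langle\rho(\eta-\eta_1)\rangle^{2\widetilde s}\,d\rho.
\]
The two residuals are, by \eqref{Ukai}, of size $c_0t\bigl(1+t^{2\widetilde s}|\eta_1|^{2\widetilde s}\bigr)$ and $c_0t\bigl(1+t^{2\widetilde s}|\eta-\eta_1|^{2\widetilde s}\bigr)$, hence unbounded in $\eta_1,\eta$. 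You can only conclude $\Psi_s(t,\eta,\xi_\tau)\le(1+C)\bigl(\Psi_s(t,\eta_1,\xi')+\Psi_s(t,\eta-\eta_1,\epsilon u')\bigr)$ with a universal $C>0$, which upon exponentiation gives $M_\delta^{1+C}$ on the right, not $M_\delta$. The appeal to ``$c_0$ small'' does not help: since $c_0$ multiplies every $\Psi_s$ term equally, the ratio of residual to main term is independent of $c_0$, and a multiplicative factor in the exponent cannot be traded for an additive constant. So the displayed ``net result'' $\Psi_s(t,\eta,\xi_\tau)\le\Psi_s+\Psi_s+C$ is simply false.

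The paper avoids this by a different three-term splitting based on the identity $\xi-\xi'=\tan(\theta/2)(u+u')$. Writing $c=\tau\tan(\theta/2)\in[-1,1]$ one has exactly
\[
\xi_\tau+\rho\eta=(\xi'+\rho\eta_1)+\bigl(\rho(\eta-\eta_1)+cu'\bigr)+cu,
\]
with no $\eta$-dependent leftover. Subadditivity then gives three pieces: the first is $\Psi_s(t,\eta_1,\xi')$; the second is handled by noting that $|\rho(\eta-\eta_1)+cu'|\le\max\{|\rho(\eta-\eta_1)+u'|,|\rho(\eta-\eta_1)-u'|\}$ for $|c|\le1$, and that for $\rho>0$ this max selects the same sign throughout $(0,t]$, so integration yields exactly $\max\{\Psi_s(t,\eta-\eta_1,u'),\Psi_s(t,\eta-\eta_1,-u')\}$; the third gives $c_0t\langle u\rangle^{2\widetilde s}\le\langle u\rangle^{2\widetilde s}$ and produces the factor $e^{\langle u\rangle^{2\widetilde s}}$ in the statement. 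This is where the $e^{\langle u\rangle^{2\widetilde s}}$ weight comes from---in your approach it plays no structural role and is invoked only as a constant $\ge e$ at the end, which is a symptom that the decomposition has not isolated the right pieces.
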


\begin{proof}
We have firstly
\begin{align*}
    \Psi_s(t,\eta,-{t\eta}-\xi)= \Psi_s(t,\eta,\xi)
\end{align*}
That means $\Psi_s(t,\eta,\xi)$ is symmetric with respect to $\xi=\frac{t\eta}{2}$.
Now notice
\begin{align*}
  (\xi-\xi')&=\xi(1-\cos\theta)+u\sin\theta\\
  &=\sin\frac{\theta}{2}\, \frac{\xi\, \sin\theta+u(1+\cos\theta)}{\cos\frac{\theta}{2}}=\sin\frac{\theta}{2}\, \frac{u'+u}{\cos\frac{\theta}{2}},
\end{align*}
then 
\begin{align*}
    \langle \xi_{\tau}+\rho\eta\rangle^{2\tilde{s}}&\le \langle \xi'+\rho\eta+\tau\tan{\frac{\theta}{2}}\ (u+u')\rangle^{2\tilde{s}}\\
&\le\langle \xi'+\rho\eta_1+\rho(\eta-\eta_1)+\tau\tan{\frac{\theta}{2}}\ (u+u')\rangle^{2\tilde{s}}\\
&\le \langle \xi'+\rho\eta_1\rangle^{2\tilde{s}}+\langle\rho(\eta-\eta_1)+\tau\tan{\frac{\theta}{2}}\ (u')\rangle^{2\tilde{s}}+\langle \tau\tan{\frac{\theta}{2}}\ (u)\rangle^{2\tilde{s}}.
\end{align*}
On the other hand 
$$
\langle\rho(\eta-\eta_1)+\tau\tan{\frac{\theta}{2}}\ (u')\rangle^{2\tilde{s}}\le \max\{\langle\rho(\eta-\eta_1)-u'\rangle^{2\tilde{s}},\langle\rho(\eta-\eta_1)+u'\rangle^{2\tilde{s}}\}.
$$
Since $\frac{x}{1+\delta x}$ is a increasing function for $x>0$, we have
\begin{align*}
    \frac{ABC}{1+\delta ABC}&\le 9\frac{A}{1+\delta A}\frac{B}{1+\delta B}\frac{C}{1+\delta C}\\
    &\le 9\frac{A}{1+\delta A}\frac{B}{1+\delta B}{C},
\end{align*}
then we have 
\begin{align*}
M_\delta(t,\eta,\xi_{\tau})\le C M_\delta(t,\eta_1,\xi')\max
\{ M_\delta(t,\eta-\eta_1,u'), M_\delta(t,\eta-\eta_1,-u')\}e^{\langle u\rangle^{2\tilde{s}}}.
    \end{align*}
\end{proof}

\begin{lemma}\label{bd}
Let 
\[
\begin{aligned}
v'&=v\cos\theta-v_{*}\sin\theta,\\
v_{*}'&=v\sin\theta+v_{*}\cos\theta,
\end{aligned}
\qquad \theta\in[0,2\pi).
\]
Then, for every triple $(v,v_{*},\theta)$ one has, for $0<s\le 1$,
\begin{equation*}
2^{s-1}\;\le\;
\frac{\langle v_{*}\rangle^{2s}+\langle v\rangle^{2s}}
{\langle v_{*}'\rangle^{2s}+\langle v'\rangle^{2s}}
\;\le\;2^{\,1-s}\,.  
\end{equation*}
In particular, when $s=1$ the ratio equals $1$ exactly. 
\end{lemma}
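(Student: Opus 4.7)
The plan is to exploit the fact that the collisional map $(v,v_*)\mapsto(v',v_*')$ is an orthogonal (rotation) transformation of $\mathbb{R}^2$, so it preserves $v^2+v_*^2$. Consequently
\[
\langle v\rangle^2+\langle v_*\rangle^2 \;=\; 2+v^2+v_*^2 \;=\; 2+(v')^2+(v_*')^2 \;=\; \langle v'\rangle^2+\langle v_*'\rangle^2.
\]
Setting $a=\langle v\rangle^2$, $b=\langle v_*\rangle^2$, $a'=\langle v'\rangle^2$, $b'=\langle v_*'\rangle^2$, the problem is reduced to estimating $(a^s+b^s)/((a')^s+(b')^s)$ under the single constraint $a+b=a'+b'$, with $a,b,a',b'\ge 1$. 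The case $s=1$ is immediate from this identity, giving ratio $=1$.

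For $0<s\le 1$, I would invoke the elementary two-sided inequality
\[
(x+y)^s \;\le\; x^s+y^s \;\le\; 2^{\,1-s}(x+y)^s, \qquad x,y>0.
\]
The upper bound follows from concavity of $t\mapsto t^s$ applied at the midpoint of $x$ and $y$ (equivalently, from Jensen's inequality), while the lower bound follows by normalizing $x+y=1$ and observing that $t^s\ge t$ on $[0,1]$ for $s\le 1$, so $x^s+y^s\ge x+y=1$. Applying this separately to the numerator and the denominator and using $a+b=a'+b'$ yields
\[
\frac{a^s+b^s}{(a')^s+(b')^s} \;\le\; \frac{2^{\,1-s}(a+b)^s}{(a'+b')^s}\;=\;2^{\,1-s},
\]
and by the symmetric application the reciprocal of the same reasoning,
\[
\frac{a^s+b^s}{(a')^s+(b')^s} \;\ge\; \frac{(a+b)^s}{2^{\,1-s}(a'+b')^s}\;=\;2^{\,s-1}.
\]

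There is no real obstacle: the entire content of the lemma is the conservation identity coming from the rotational structure of Kac's collisional map, followed by a standard concavity inequality for the power $t\mapsto t^s$ on the positive half line. The only point that deserves a line in the write-up is to verify the conservation $v^2+v_*^2=(v')^2+(v_*')^2$ directly from the formulas $v'=v\cos\theta-v_*\sin\theta$, $v_*'=v\sin\theta+v_*\cos\theta$, which is a trivial expansion.
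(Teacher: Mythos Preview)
Your argument is correct. The paper states this lemma without proof, so there is nothing to compare against; your proposal supplies a complete and natural justification. The two ingredients you use---the conservation $v^2+v_*^2=(v')^2+(v_*')^2$ from the rotational structure of the Kac collision map, and the elementary power inequality $(x+y)^s\le x^s+y^s\le 2^{1-s}(x+y)^s$ for $0<s\le 1$---are exactly what is needed, and both are verified correctly.
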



\section{Analysis of linear operator}\label{section3}\smallskip

We now come back to the proof  the Proposition \ref{proposition2.2},  we first have
$$
\big(\mathcal{L}g, M_{\delta}^2g \big)_{H^r_x(L^2_v)}=
\big(\mathcal{L}( M_{\delta} g), M_{\delta}g \big)_{H^r_x(L^2_v)}
 + \big([\mathcal{L},  M_{\delta}] g, M_{\delta}g \big)_{H^r_x(L^2_v)}.
 $$
Firstly
$$
\big(\mathcal{L}( M_{\delta} g), M_{\delta}g \big)_{H^r_x(L^2_v)}
   \ge \tilde{c}_1 ||| M_{\delta} g|||_{(r,0)}^2 -{C}_1||M_{\delta}g||^2_{H^r_x(L^2_v)}.     
$$
So that the proof of  the Proposition \ref{proposition2.2} is transfer to the following estimates of commutators.

\begin{proposition}\label{pr1}
Assume that the cross-section satisfies \eqref{1.1+1} with $0<s<1$, there exists $C>0$ such that for suitable function $g, h$,
$$
\Big|\Big([\mathcal{L},\  M_{\delta}]g,h \Big)_{H^r_x(L^2_v)}\Big|
\leq C \| M_{\delta} g\|_{H^r_x(L^2_v)}\|h\|_{H^r_x(L^2_v)}
$$
\end{proposition}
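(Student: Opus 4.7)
The plan is to pass to the Fourier side via Corollary \ref{Kx}, which converts $[\mathcal{L}, M_\delta]g$ into a collision-type integral whose integrand contains the finite difference $M_\delta(\eta, \xi') - M_\delta(\eta, \xi)$. A Taylor expansion of this difference, combined with the derivative bounds in Lemma \ref{M}, should then produce enough powers of $\theta$ to absorb the angular singularity of $\beta$, and a linear-case version of Lemma \ref{7num} moves the remaining $M_\delta$ onto $\widehat{g}(\xi')$ so as to recognize $\widehat{M_\delta g}$.

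Concretely, I split $\mathcal{L} = \mathcal{L}_1 + \mathcal{L}_2$ with $\mathcal{L}_1 g = -\mathcal{K}(\mu^{1/2}, g)$ and $\mathcal{L}_2 g = -\mathcal{K}(g, \mu^{1/2})$, and handle each commutator separately. For $[\mathcal{L}_1, M_\delta]$, since $\mu^{1/2}$ is $x$-independent the $\eta_1$-integration in Corollary \ref{Kx} collapses, and the Bobylev ``loss'' terms cancel against $M_\delta(\eta, \xi) \mathcal{F}(\mathcal{L}_1 g)$, leaving
\begin{equation*}
\mathcal{F}([\mathcal{L}_1, M_\delta]g)(\eta, \xi) = -\frac{1}{2\pi}\iint \beta(\theta)\mu^2(u)\widehat{\mu^{1/2}}(u')\bigl[M_\delta(\eta, \xi') - M_\delta(\eta, \xi)\bigr]\widehat{g}(\eta, \xi')\, du\, d\theta;
\end{equation*}
the analogous formula for $[\mathcal{L}_2, M_\delta]$ has the roles of $\widehat{\mu^{1/2}}$ and $\widehat{g}$ swapped between the arguments $u'$ and $\xi'$. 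A second-order Taylor expansion of $M_\delta(\xi') - M_\delta(\xi)$ based at $\xi$, together with Lemma \ref{M} (which gives $|\partial_\xi^j M_\delta| \leq C M_\delta$ for $j = 1, 2$), reduces the analysis to a handful of pieces. The second-order remainder carries the factor $(\xi'-\xi)^2 \leq C(\theta^4 \xi^2 + \theta^2 u^2)$; multiplied by $\beta(\theta) \sim |\theta|^{-1-2s}$ this gives $\theta$-integrability for $s < 1$, with the $u^2$ growth absorbed by $\mu^2(u)$ and the $\xi^2$ growth tempered by the actual refined decay $|\partial_\xi^2 \Psi_s| \lesssim \langle\xi\rangle^{2\tilde{s}-2}$. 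The first-order piece $(\cos\theta - 1)\xi\,\partial_\xi M_\delta(\xi)$ contributes $O(\theta^2)$ and is controlled identically. To replace $M_\delta(\eta, \xi_\tau)$ by $M_\delta(\eta, \xi')$ so as to form $\widehat{M_\delta g}(\xi')$, I would apply the linear-case analog of Lemma \ref{7num}: since no $\eta_1$-splitting is needed, $M_\delta(\eta, \xi_\tau) \leq C M_\delta(\eta, \xi') e^{c\langle u\rangle^{2\tilde{s}}}$, and the extra exponential is killed by $\mu^2(u)$.

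The delicate contribution is the first-order piece $-u\sin\theta\,\partial_\xi M_\delta(\eta, \xi)$, whose single factor $\sin\theta$ is not integrable against $|\theta|^{-1-2s}$ for $s \geq 1/2$. I would handle it by integration by parts in $u$ using $u\mu^2(u) = -\tfrac{1}{2}\partial_u\mu^2(u)$: because $\partial_u u' = \cos\theta$ and $\partial_u \xi' = -\sin\theta$, the derivative either lands on $\widehat{\mu^{1/2}}(u')$ (generating a $\cos\theta$ together with another rapidly decaying Gaussian in $u'$) or on $\widehat{g}(\eta, \xi')$ (generating the extra $-\sin\theta$ that combines with the original $\sin\theta$ to give $\sin^2\theta$, restoring the $\theta^2$ gain; the resulting $-iv$ multiplier on $g$ is of order one and is absorbed by the Gaussian weight present throughout the commutator). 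Once every $\theta$-integral converges, Plancherel, Cauchy--Schwarz in the Fourier variables, and the orthogonal change of variables $(\xi, u) \leftrightarrow (\xi', u')$ together close the estimate by $C\|M_\delta g\|_{H^r_x(L^2_v)}\|h\|_{H^r_x(L^2_v)}$. The principal obstacle is exactly this $-u\sin\theta$ term: the cancellation relies on the explicit Gaussian structure of both $\mu^2$ and $\widehat{\mu^{1/2}}$, and for $[\mathcal{L}_2, M_\delta]g$, where $\widehat{g}$ sits at $u'$ rather than $\xi'$, the parallel argument needs the rotation identity $(\xi')^2 + (u')^2 = \xi^2 + u^2$ and Lemma \ref{bd} to swap the roles before the integration-by-parts can be performed.
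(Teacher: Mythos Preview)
Your overall framework (Bobylev formula, Taylor expansion of $M_\delta(\xi')-M_\delta(\xi)$, Gaussian absorption, orthogonal change of variables) matches the paper's. However, two of your key steps do not close, and the fix the paper uses is different from both devices you propose.

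\textbf{The $\xi$-growth problem.} You bound $(\xi'-\xi)^2\le C(\theta^4\xi^2+\theta^2u^2)$ and then appeal to the ``refined decay $|\partial_\xi^2\Psi_s|\lesssim\langle\xi\rangle^{2\tilde s-2}$'' to kill the factor $\xi^2$. But $\partial_\xi^2 M_\delta$ is not $\partial_\xi^2\Psi_s$: writing $\partial_\xi M_\delta=(\partial_\xi\Psi_s)M_\delta/(1+\delta e^{\Psi_s})$ and differentiating once more produces a term $(\partial_\xi\Psi_s)^2 M_\delta\cdot(\text{bounded})$, and $\partial_\xi\Psi_s=c_0\int_0^t 2\tilde s(\xi+\rho\eta)\langle\xi+\rho\eta\rangle^{2\tilde s-2}\,d\rho$ does \emph{not} decay in $\xi$ (for $\tilde s=1/2$, $\eta=0$ it tends to $\pm c_0t$). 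So all Lemma~\ref{M} gives is $|\partial_\xi^2 M_\delta|\le CM_\delta$, and the stray $\xi^2$ forces an $H^2_v$--type loss on $g$ or $h$. The same objection applies to your first-order piece $(\cos\theta-1)\xi\,\partial_\xi M_\delta(\xi)$. The paper avoids any $\xi$-growth by using instead the identity
\[
\xi-\xi'=\tan\tfrac{\theta}{2}\,(u+u'),
\]
so that both $|\xi-\xi'|$ and $|\xi-\xi'|^2$ are bounded by powers of $|\theta|$ times $\langle u\rangle\langle u'\rangle$, which the weights $\mu^2(u)$ and $\widehat{\mu^{1/2}}(u')=C\mu^2(u')$ absorb.

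\textbf{The integration-by-parts in $u$.} For the $-u\sin\theta\,\partial_\xi M_\delta(\xi)$ contribution you integrate by parts using $u\mu^2(u)=-\tfrac12\partial_u\mu^2(u)$. When $\partial_u$ lands on $\widehat g(\eta,\xi')$ you gain the desired extra $\sin\theta$, but the price is $\partial_\xi\widehat g(\eta,\xi')=\widehat{-ivg}(\eta,\xi')$: this puts a factor $v$ on $g$, and there is \emph{no} Gaussian in the $v$-variable of $g$ anywhere in the commutator to absorb it (the Gaussians live on $u$ and $u'$). When $\partial_u$ lands on $\widehat{\mu^{1/2}}(u')$ you only pick up $\cos\theta$, so you are still left with a single $\sin\theta$ against $\beta(\theta)\sim|\theta|^{-1-2s}$, non-integrable for $s\ge 1/2$; iterating does not help since each round returns a term of the same type with coefficient $\cos\theta\to1$. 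The paper handles this range differently: it Taylor-expands at $\xi'$ (not $\xi$) so that the first-order term is $(\xi-\xi')\partial_\xi M_\delta(\xi')$, and then invokes a cancellation identity of the form $\iint\beta(\theta)(\xi-\xi')F(\xi')\,d\theta\,d\xi=0$ to discard it, leaving only the second-order remainder controlled as above via $\xi-\xi'=\tan(\theta/2)(u+u')$.
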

We prove this proposition by the following two Lemmas.
\begin{lemma}\label{2.4}
Assume that the cross-section satisfies \eqref{1.1+1} with $0<s<1$ and $r>1/2$, there exists $C>0$ such that for $0<t<T, v\in \mathbb{R}, x\in\mathbb{R}$,
\begin{equation*}
  \Big|\Big([\mathcal{L}_1,\  M_{\delta}]g, h \Big)_{H^r_x(L^2_v)}\Big|
\leq C\|M_{\delta}g\|_{H^r_x(L^2_v)}\|h\|_{H^r_x(L^2_v)}.
\end{equation*}
\end{lemma}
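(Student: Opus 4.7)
The plan is to first reformulate the commutator on the Fourier side. Applying the refined Bobylev formula of Corollary 3.2 to $\mathcal{L}_1 g = -\mathcal{K}(\mu^{1/2}, g)$, and using that $\mu^{1/2}$ is independent of $x$ so that its $x$-Fourier transform collapses the $\eta$-convolution, one obtains, after subtracting the representations of $\mathcal{L}_1(M_\delta g)$ and $M_\delta\mathcal{L}_1 g$ (the diagonal loss parts cancel exactly),
$$
\widehat{[\mathcal{L}_1, M_\delta]g}(\eta,\xi) = -\frac{1}{2\pi}\iint \beta(\theta)\mu^2(u)\widehat{\mu^{1/2}}(u')\bigl[M_\delta(t,\eta,\xi') - M_\delta(t,\eta,\xi)\bigr]\widehat{g}(\eta,\xi')\,du\,d\theta,
$$
where $\xi' = \xi\cos\theta - u\sin\theta$ and $u' = \xi\sin\theta + u\cos\theta$.

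Next I would expand the multiplier difference by Taylor,
$$
M_\delta(t,\eta,\xi') - M_\delta(t,\eta,\xi) = (\xi'-\xi)\int_0^1\partial_\xi M_\delta(t,\eta,\xi_\tau)\,d\tau,\qquad \xi_\tau = \xi+\tau(\xi'-\xi),
$$
use Lemma \ref{M} to bound $|\partial_\xi M_\delta(t,\eta,\xi_\tau)|\le C M_\delta(t,\eta,\xi_\tau)$, and use Lemma \ref{7num} with $\eta_1 = \eta$ to replace $M_\delta(t,\eta,\xi_\tau)$ by $C M_\delta(t,\eta,\xi')$ at the cost of an overhead $M_\delta(t,0,\pm u')\,e^{\langle u\rangle^{2\tilde{s}}}\le e^{c_0 t\langle u'\rangle^{2\tilde{s}}+\langle u\rangle^{2\tilde{s}}}$. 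Since $\tilde{s}\le 1/2$ and $c_0$ is small, both exponentials are completely absorbed into the Gaussian decay of $\mu^2(u)\widehat{\mu^{1/2}}(u')$. The estimate is then reduced to establishing
$$
\sup_{\xi\in\mathbb{R}}\iint \beta(\theta)\,|\xi'-\xi|\,\mu^2(u)\,|\widehat{\mu^{1/2}}(u')|\,du\,d\theta<\infty,
$$
after which Cauchy--Schwarz in $(\eta,\xi')$ will close the argument.

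The delicate point is the integrability at $\theta=0$. For $|\theta|\ge \pi/4$ the kernel is bounded and the contribution is harmless. For $|\theta|<\pi/4$ I would split $\xi'-\xi = \xi(\cos\theta-1) - u\sin\theta$ and perform the linear substitution $u\mapsto u'$ (Jacobian $\sec\theta$, uniformly bounded), which transforms $\mu^2(u)$ into a Gaussian centered at $\xi\sin\theta$, producing a factor $e^{-c\xi^2\sin^2\theta}$ that damps any polynomial in $|\xi\theta|$ or $|u|=|u'-\xi\sin\theta|/\cos\theta$. Using $|\xi'-\xi|\le \tfrac12\theta^2|\xi| + |\theta|\,|u|$ and integrating in $u'$ against $\widehat{\mu^{1/2}}(u')$, one is left with a $\theta$-kernel of size $|\theta|^{1-2s}$, which is integrable because $s<1$.

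The final ingredient is Cauchy--Schwarz in $(\eta,\xi')$; since the commutator is diagonal in the $x$-frequency $\eta$, the weight $\langle\eta\rangle^{2r}$ passes through unchanged, and one recovers $C\|M_\delta g\|_{H^r_x(L^2_v)}\|h\|_{H^r_x(L^2_v)}$. The principal obstacle in this program is the first-order Taylor term involving $-u\sin\theta$, whose naive bound $|u||\theta|^{-2s}$ is not integrable in $\theta$ once $s\ge 1/2$; the resolution via the change of variable $u\mapsto u'$, which exploits the joint Gaussian decay of $\mu^2(u)$ and $\widehat{\mu^{1/2}}(u')$, is the technical heart of the argument, while the second-order remainder of size $(\xi'-\xi)^2\sim \theta^2(u^2+\xi^2\theta^2)$ is routinely handled in the same fashion.
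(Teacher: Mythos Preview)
Your Fourier-side reformulation of the commutator and the first-order Taylor expansion match the paper's treatment of the mild singularity range $0<s<1/2$, and there your sketch is essentially correct. The gap is in the strong singularity regime $1/2\le s<1$: the change of variable $u\mapsto u'$ does \emph{not} manufacture the extra power of $|\theta|$ you claim. Concretely, your reduced quantity
\[
\sup_{\xi}\iint \beta(\theta)\,|\xi'-\xi|\,\mu^{2}(u)\,\bigl|\widehat{\mu^{1/2}}(u')\bigr|\,du\,d\theta
\]
diverges once $s\ge 1/2$. Take $\xi=0$, so that $u'=u\cos\theta$ and both Gaussians are centred at the origin; the contribution of the term $|\xi'-\xi|=|u\sin\theta|$ is then bounded below by
\[
c\int_{0}^{\pi/4}\beta(\theta)|\sin\theta|\,d\theta\;\sim\;\int_{0}^{\pi/4}|\theta|^{-2s}\,d\theta=+\infty.
\]
The damping factor $e^{-c\xi^{2}\sin^{2}\theta}$ you extract only absorbs polynomial factors of $|\xi\sin\theta|$; it vanishes when $\xi=0$ and in any case never produces an additional bare power of $|\theta|$. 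So the ``resolution via the change of variable $u\mapsto u'$'' does not close the estimate, and your $\theta$-kernel is of size $|\theta|^{-2s}$, not $|\theta|^{1-2s}$.

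The paper's fix for $1/2\le s<1$ is structural rather than analytic. One expands to second order,
\[
M_{\delta}(\xi)-M_{\delta}(\xi')=(\xi-\xi')\,\partial_{\xi}M_{\delta}(\xi')+\int_{0}^{1}(1-\tau)(\xi-\xi')^{2}\,\partial_{\xi}^{2}M_{\delta}(\xi_{\tau})\,d\tau,
\]
and removes the entire first-order piece by the cancellation identity
\[
\int_{\mathbb{R}}\int_{-\pi/2}^{\pi/2}\beta(\theta)\,(\xi-\xi')\,F(\xi')\,d\theta\,d\xi=0\qquad(\text{fixed }u),
\]
which the paper invokes using the evenness of $\beta$. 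The quadratic remainder carries $(\xi-\xi')^{2}=\tan^{2}(\theta/2)\,(u+u')^{2}$, so that $\beta(\theta)$ is multiplied by $\sin^{2}(\theta/2)$, yielding an integrable kernel $\sim|\theta|^{1-2s}$ for all $s<1$; the factors $(u+u')^{2}$ are then absorbed by $\mu^{2}(u)\mu^{2}(u')$ exactly as in your $s<1/2$ argument. Your proposal omits this cancellation step, and without it the commutator bound cannot be obtained in the range $1/2\le s<1$.
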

\begin{proof}
We have
\begin{align*}
&\Big([\mathcal{L}_1,\  M_{\delta}]g,h \Big)_{H^r_x(L^2_v)}= \Big(M_{\delta} \mathcal{K}(\mu^{1/2},  g),  h \Big)_{H^r_x(L^2_v)}-
\Big(\mathcal{K}(\mu^{1/2},  M_{\delta}g), h \Big)_{H^r_x(L^2_v)}.
\end{align*}
Using Plancerel formula, Lemma \ref{KK+11}, and a cut-off approximation procesus, we can formaly wirte the collisian term in two parts by following steps:
\begin{align*}
&\Big([\mathcal{L}_1,\  M_{\delta}]g,h \Big)_{H^r_x(L^2_v)}
\\ &= \frac{1}{2\pi}\Big\{\int_{\mathbb{R}^2_{\xi, \eta}} \langle\eta\rangle^{2r}\int_{-\pi /2}^{\pi /2}\beta (\theta
)\Big\{ \int_{\mathbb{R}} \mu^2(u)M_{\delta}(t,\eta,\xi)\widehat{g}(t,\eta,\xi')\, \widehat{\mu^{\frac{1}{2}}}\left(u '\right)
\,du\Big\}d\theta\,\overline{\widehat{h}    }\,du d\xi d\eta\\
&\ \ \ \ -\int_{\mathbb{R}^2_{\xi, \eta}}\langle\eta\rangle^{2r}\int_{-\pi /2}^{\pi /2}\beta (\theta
)\Big\{ \int_{\mathbb{R}} \mu^2(u)M_{\delta}(t,\eta,\xi)\widehat{g}(t,\eta,\xi)\, \widehat{\mu^{\frac{1}{2}}}\left(u\right)
\,du\Big\}d\theta\,\overline{\widehat{h}    }\,du d\xi d\eta\\
&-\int_{\mathbb{R}^2_{\xi, \eta}} \langle\eta\rangle^{2r}\int_{-\pi /2}^{\pi /2}\beta (\theta
)\Big\{ \int_{\mathbb{R}} \mu^2(u)M_{\delta}(t,\eta,\xi')\widehat{g}(t,\eta,\xi')\, \widehat{\mu^{\frac{1}{2}}}\left(u '\right)
\,du\Big\}d\theta\,\overline{\widehat{h}    }\,du d\xi d\eta\\
&\ \ \ \ +\int_{\mathbb{R}^2_{\xi, \eta}}\langle\eta\rangle^{2r}\int_{-\pi /2}^{\pi /2}\beta (\theta
)\Big\{ \int_{\mathbb{R}} \mu^2(u)M_{\delta}(t,\eta,\xi)\widehat{g}(t,\eta,\xi)\, \widehat{\mu^{\frac{1}{2}}}\left(u\right)
\,du\Big\}d\theta\,\overline{\widehat{h}    }\,du d\xi d\eta\Big\}.
\end{align*}
Thus, we have
\begin{align*}
&\Big([\mathcal{L}_1,\  M_{\delta}]g,h \Big)_{H^r_x(L^2_v)}
\\ &\qquad= \frac{1}{2\pi}\int_{\mathbb{R}^2_{\xi, \eta}} \langle\eta\rangle^{2r}\int_{-\pi /2}^{\pi /2}\beta (\theta
)\Big\{\int_{\mathbb{R}} \mu^2(u)\big(M_{\delta}(t,\eta,\xi)-M_{\delta}(t,\eta,\xi')\big)\\
&\qquad\qquad\qquad\qquad \times\widehat{g}(t,\eta,\xi')\, \widehat{\mu^{\frac{1}{2}}}\left(u '\right)
\,du \Big\}d\theta\ \overline{\widehat{h}    }\, d\xi d\eta
\end{align*}
{\bf The case of  $0<s<1/2$ :}\ 
The first-order Taylor formula gives
\begin{align*}
M_{\delta}(t,\eta,\xi)-M_{\delta}(t,\eta,\xi')=
\int_0^1(\xi-\xi')\partial_{\xi}M_{\delta}(t,\eta,\xi_\tau)d\tau
\end{align*}
where $\xi_\tau=\xi'-\tau(\xi'-\xi)$. Thus we have
\begin{align*}
&\Big|\Big([\mathcal{L}_1,\  M_{\delta}]g,h \Big)_{H^r_x(L^2_v)}\Big|
\\
&\qquad=C \Big|\int_{\mathbb{R}^2_{\xi, \eta}} \langle\eta\rangle^{2r}\int_{-\pi /2}^{\pi /2}\beta (\theta
)\Big\{ \int_{\mathbb{R}} \mu^2(u)\big(M_{\delta}(t,\eta,\xi)-M_{\delta}(t,\eta,\xi')\big)\\
&\qquad\qquad\qquad\qquad\qquad\times \widehat{g}(t,\eta,\xi')\, \mu^2\left(u '\right)
\,du\Big\}d\theta \ \overline{\widehat{h}    }\, d\xi d\eta\Big|\\
&\qquad=C \Big|\int_{\mathbb{R}^3_{\xi, \eta, u}}\int_{-\pi /2}^{\pi /2} \langle\eta\rangle^{2r}\beta (\theta
)\Big\{  \mu^2(u)\left(\int_0^1(\xi-\xi')\partial_\xi M_{\delta}(t,\eta,\xi_\tau)d\tau\right)\\
&\qquad\qquad\qquad\qquad\qquad\times \widehat{g}(t,\eta,\xi')\,\mu^2\left(u '\right)
\,\Big\} \ \overline{\widehat{h}    }\,du d\xi d\theta  d\eta\Big|.
\end{align*}
 Note that for any $\xi,\xi'$, there is
\begin{align*}
    \langle \xi_{\tau}+\rho\eta\rangle&\le \langle \xi'+\rho\eta\rangle+|\xi_{\tau}-\xi'|=\langle \xi'+\rho\eta\rangle+|\tau(\xi-\xi')|\\
&\le\langle \xi'+\rho\eta\rangle+|\tan{\frac{\theta}{2}}|\ |u+u'|,\,\,\forall u,\xi\in\RR.
\end{align*}
Thus we have 
\begin{align*}
     \frac{M_{\delta}(t, \eta, \xi_{\tau})}{M_{\delta}(t, \eta, \xi')}\le 3e^{t\tan{\frac{\theta}{2}}|u+u'|},
\end{align*}
Using $|u+u'|\le\langle u\rangle+\langle u'\rangle\le 4\langle u\rangle\langle u'\rangle$ and Lemma \ref{M},
\begin{align*}
&\Big|\int_{\mathbb{R}^3_{\xi, \eta, u}}\int_{-\pi /2}^{\pi /2}\langle\eta\rangle^{2r}\beta (\theta
) \mu^2(u)\left(\int_0^1(\xi-\xi')\partial_\xi M_{\delta}(t,\eta,\xi_\tau)d\tau\right)\\
&\qquad\qquad\qquad\times \widehat{g}(t,\eta,\xi')\,\mu^2\left(u '\right)
\ \overline{\widehat{h}}\,du\, d\xi d\theta  d\eta\Big|\\
&= C\Big|\int_{\mathbb{R}^3_{\xi, \eta, u}}\int_{-\pi /2}^{\pi /2} \langle\eta\rangle^{2r}\beta (\theta
)  \mu^2(u)\big(\int_0^1\sin\frac{\theta}{2}\frac{u+u'}{\cos(\theta/2)}\partial_\xi M_{\delta}(t,\eta,\xi_\tau)d\tau)\big)\\
&\qquad\qquad\qquad\times \widehat{g}(t,\eta,\xi')\, \mu^2\left(u '\right)
\ \overline{\widehat{h}}\,du d\xi d\theta  d\eta\Big|\\
&\le C\int_{\mathbb{R}^3_{\xi, \eta, u}}\int_{-\pi /2}^{\pi /2} \langle\eta\rangle^{2r}\beta (\theta
)  \mu^2(u)\Big|\sin\frac{\theta}{2}\frac{\langle u\rangle\langle u'\rangle}{\cos(\theta/2)}e^{t\tan{\frac{\theta}{2}}|u+u'|}\Big|M_{\delta}(t, \eta, \xi')\\
&\qquad\qquad\qquad\times \Big|\widehat{g}(t,\eta,\xi')\, \mu^2\left(u '\right)
 \,\overline{\widehat{h}    }\Big|\,du d\xi d\theta  d\eta.
\end{align*}
By H\"older inequality and $\beta(\theta)\sin{\theta}\sim \theta^{-2s}$ which converge for $0<s<1/2$.
\begin{align*}
&\int_{\mathbb{R}^3_{\xi, \eta, u}}\int_{-\pi /2}^{\pi /2} \langle\eta\rangle^{2r}\beta (\theta
)\mu^2(u)\big|\sin\frac{\theta}{2}\frac{\langle u\rangle\langle u'\rangle}{\cos(\theta/2)}e^{t\tan{\frac{\theta}{2}}(|u|+|u'|)}M_{\delta}(t, \eta, \xi')\big|\\
&\qquad\qquad\qquad\times \big|\widehat{g}(t,\eta,\xi')\, \mu^2\left(u '\right)
\, \overline{\widehat{h}    }\big|\,du d\xi d\theta  d\eta\\
&\leq \Big\{\int_{\mathbb{R}^3_{\xi, \eta, u}}\int_{-\pi /2}^{\pi /2} \beta (\theta
)  \big|\sin\frac{\theta}{2}\frac{\langle u'\rangle^2}{\cos(\theta/2)}\big|\ \big|e^{t\tan{\frac{\theta}{2}}(|u'|)}M_{\delta}(t, \eta, \xi')\big|^2\\
&\qquad\qquad\qquad\qquad\times\big| \langle\eta\rangle^{r} \widehat{g}(t,\eta,\xi')\, \mu^2\left(u '\right)
\,|^2\,du d\xi d\theta  d\eta\Big\}^{1/2}\\
&\qquad\qquad\qquad\times \Big\{\int_{\mathbb{R}^3_{\xi, \eta, u}}\int_{-\pi /2}^{\pi /2} \beta (\theta
)  \big|\sin\frac{\theta}{2}\frac{\langle u\rangle^2}{\cos(\theta/2)}\big|\\
&\qquad\qquad\qquad\qquad\times \big|\mu^2(u) e^{t\tan{\frac{\theta}{2}}(|u|)}
\overline{\langle\eta\rangle^{r}\widehat{h}    }|^2\,du d\xi d\theta  d\eta\Big\}^{1/2}.
\end{align*}
For the first term, use the variable change $(\xi',u')\to(\xi,u)$, this transformation is orthogonal, so the Jacobian determinant is $1$
\begin{align*}
   & \Big\{\int_{\mathbb{R}^3_{\xi, \eta, u}}\int_{-\pi /2}^{\pi /2} \langle\eta\rangle^{2r}\beta (\theta
)  \sin\frac{\theta}{2}\frac{\langle u'\rangle^2}{\cos(\theta/2)}\big|e^{t\tan{\frac{\theta}{2}}(|u'|)}M_{\delta}(t, \eta, \xi')\big|^2\\
&\qquad\qquad\qquad\qquad\times |\widehat{g}(t,\eta,\xi')\, \mu^2\left(u '\right)
\,|^2\,du d\xi d\theta  d\eta\Big\}^{1/2}\\
&\leq C\| \langle\eta\rangle^{r} M_{\delta}(t,\eta,\xi)\widehat{g}(t,\eta,\xi)\|_{L^2_\eta(L^2_\xi)}\\
&\qquad\qquad\qquad\times\|\mu^2(u)\langle u\rangle e^{|u|}\|_{L^2_u}\|\theta^{-2s}\|^{\frac{1}{2}}_{L^1_\theta}.
\end{align*}
For the second term,
\begin{align*}
&\Big\{\int_{\mathbb{R}^3_{\xi, \eta, u}}\int_{-\pi /2}^{\pi /2} \beta (\theta
) \big(\sin\frac{\theta}{2}\frac{\langle u\rangle^2}{\cos(\theta/2)}\\
&\qquad\qquad\qquad\qquad\times | \mu^2(u) e^{t\tan{\frac{\theta}{2}}(|u|)}
\langle\eta\rangle^{r}\overline{\widehat{h}    }|^2\,du d\xi d\theta  d\eta\Big\}^{1/2}\\
&\leq \| \langle\eta\rangle^{r}{\widehat{h}}(t,\eta,\xi)\|_{L^2_\eta(L^2_\xi)}\|\mu^2(u)\langle u\rangle e^{|u|}\|_{L^2_u}\|\theta^{-2s}\|^{\frac{1}{2}}_{L^1_\theta}.
\end{align*}
Hence, for $0<s<\frac 12$, we get
\begin{align*}
 &\Big|\Big([\mathcal{L}_1,\  M_{\delta}(t, D_x, D_v)]g, h \Big)_{H^r_x(L^2_v)}\Big|\\
&\leq C\| M_{\delta}g \|_{H^r_x(L^2_v)}\| h\|_{H^r_x(L^2_v)}.
\end{align*}
{\bf The case of $1/2\le s<1$:}, We use now the second-order Taylor formula.
\begin{align*}
M_{\delta}(t,\eta,\xi)-M_{\delta}(t,\eta,\xi')&=(\xi-\xi')\partial_\xi M_{\delta}(t,\eta,\xi')\\
&\qquad+\int_0^1(1-\tau)(\xi-\xi')^2\partial_x^2M_{\delta}(t,\eta,\xi_\tau)d\tau,
\end{align*}
and for a suitable function $F$, for any fixed $u\in \RR$
\[
\int_{\mathbb{R}}\int^{\pi/2}_{-\pi/2}\beta(\theta)(\xi-\xi')F(\xi')d\theta d\xi=0.
\]
Thus we have
\begin{align*}
&\Big|\Big([\mathcal{L}_1,\  M_{\delta}]g,h \Big)_{H^r_x(L^2_v)}\Big|
\\
&\qquad=C \Big|\int_{\eta}\int_{\xi} \langle\eta\rangle^{r}\int_{-\pi /2}^{\pi /2}\beta (\theta
)\Big\{ \int_{\mathbb{R}} \mu^2(u)\big(M_{\delta}(t,\eta,\xi)-M_{\delta}(t,\eta,\xi')\big)\\
&\qquad\qquad\qquad\times \widehat{g}(t,\eta,\xi')\, \mu^2\left(u '\right)
\,du\Big\}d\theta \,\overline{\langle\eta\rangle^{r}\widehat{h}    }\, d\xi d\eta\Big|\\
&\qquad=C \Big|\int_{\mathbb{R}^3_{\xi, \eta, u}}\int_{-\pi /2}^{\pi /2}  \langle\eta\rangle^{r}\beta (\theta
)\Big\{  \mu^2(u)\Big(\int_0^1(1-\tau)(\xi-\xi')^2\partial_{\xi}^2M_{\delta}(t,\eta,\xi_\tau)d\tau \Big)\\
&\qquad\qquad\qquad\times \widehat{g}(t,\eta,\xi')\, \mu^2\left(u '\right)
\,\Big\} \,\overline{\langle\eta\rangle^{r}\widehat{h}    }\,d\theta du d\xi  d\eta\Big|.
\end{align*}
Thanks to Lemma \ref{M} we have
\begin{align*}
    \Big|&\int_{\mathbb{R}^3_{\xi, \eta, u}}\int_{-\pi /2}^{\pi /2}  \langle\eta\rangle^{r}\beta (\theta
)  \mu^2(u)\Big(\int_0^1(1-\tau)(\xi-\xi')^2\partial_{\xi}^2M_{\delta}(t,\eta,\xi_\tau)d\tau \Big)\\
&\qquad\qquad\qquad\times \widehat{g}(t,\eta,\xi')\, \mu^2\left(u '\right) \,\overline{\langle\eta\rangle^{r}\widehat{h}    }\,d\theta du d\xi  d\eta\Big|\\
&\le C \int_{\mathbb{R}^3_{\xi, \eta, u}}\int_{-\pi /2}^{\pi /2}  \langle\eta\rangle^{r}\beta (\theta
)  \mu^2(u)\Big|\sin^2\frac{\theta}{2}\frac{(u+u')^2}{\cos^2(\theta/2)}e^{t\tan{\frac{\theta}{2}}|u+u'|}M_{\delta}(t, \eta, \xi')\Big|\\
&\qquad\qquad\qquad\times |\widehat{g}(t,\eta,\xi)\, \mu^2\left(u '\right)
\, \,\overline{\langle\eta\rangle^{r}\widehat{h}   }|\,d\theta du d\xi  d\eta.
\end{align*}
Now $|\xi-\xi'|^2=C\sin^2\frac{\theta}{2}|u+u'|^2$, 
Using H\"older inequality, and  $|u+u'|^2\le C\langle u\rangle^2\langle u'\rangle^2 $, we have
\begin{align*}
    &\int_{\mathbb{R}^3_{\xi, \eta, u}}\int_{-\pi /2}^{\pi /2}  \langle\eta\rangle^{r}\beta (\theta
) \mu^2(u)\Big|\sin^2\frac{\theta}{2}\frac{(u+u')^2}{\cos^2(\theta/2)}e^{t\tan{\frac{\theta}{2}}|u+u'|}M_{\delta}(t, \eta, \xi')\Big|\\
&\qquad\qquad\qquad\times |\widehat{g}(t,\eta,\xi)\, \mu^2\left(u '\right) \,\overline{\langle\eta\rangle^{r}\widehat{h}    }|\,du d\xi d\theta  d\eta\\
&\leq C\Big\{\int_{\mathbb{R}^3_{\xi, \eta, u}}\int_{-\pi /2}^{\pi /2}  \beta (\theta
)  \sin^2\frac{\theta}{2}\frac{\langle u'\rangle^4}{\cos(\theta/2)}\Big|e^{t\tan{\frac{\theta}{2}}(|u'|)}M_{\delta}(t, \eta, \xi')\\
&\qquad\qquad\qquad\qquad\times \langle\eta\rangle^{r}\widehat{g}(t,\eta,\xi')\, \mu^2\left(u '\right)
\ \Big|^2\,du d\xi d\theta  d\eta\Big\}^{1/2}\\
&\qquad\qquad\qquad\times \Big\{\int_{\mathbb{R}^3_{\xi, \eta, u}}\int_{-\pi /2}^{\pi /2}  \beta (\theta
)  \mu^2(u)\sin^2\frac{\theta}{2}\frac{\langle u\rangle^4}{\cos(\theta/2)}\\
&\qquad\qquad\qquad\qquad\times |e^{t\tan{\frac{\theta}{2}}(|u|)}
\overline{\langle\eta\rangle^{r}\widehat{h}    }|^2\,du d\xi d\theta  d\eta\Big\}^{1/2}.
\end{align*}
For the first term, use the variable change $(\xi',u')\to(\xi,u)$, this transformation is orthogonal, so the Jacobian determinant is $1$,
\begin{align*}
   & \Big\{\int_{\mathbb{R}^3_{\xi, \eta, u}}\int_{-\pi /2}^{\pi /2}   \langle\eta\rangle^{2r}\beta (\theta
)  \sin^2\frac{\theta}{2}\frac{\langle u'\rangle^4}{\cos^2(\theta/2)}\Big|e^{t\tan{\frac{\theta}{2}}(|u'|)}M_{\delta}(t, \eta, \xi')\\
&\qquad\qquad\qquad\qquad\times \widehat{g}(t,\eta,\xi')\, \mu^2\left(u '\right)
\,\Big|^2\,du d\xi d\theta  d\eta\Big\}^{1/2}\\
=&  \Big\{\int_{\mathbb{R}^3_{\xi, \eta, u}}\int_{-\pi /2}^{\pi /2}  \langle\eta\rangle^{2r}\beta (\theta
)  \sin^2\frac{\theta}{2}\frac{\langle u\rangle^4}{\cos^2(\theta/2)}\Big|e^{t\tan{\frac{\theta}{2}}(|u|)}M_{\delta}(t, \eta, \xi)\\
&\qquad\qquad\qquad\qquad\times \widehat{g}(t,\eta,\xi)\, \mu^2\left(u \right)
\,\Big|^2\,du d\xi d\theta  d\eta\Big\}^{1/2}\\
&\leq C\| \langle\eta\rangle^{r} M_{\delta}(t,\eta,\xi)\widehat{g}(t,\eta,\xi)\|_{L^2_\eta(L^2_\xi)}\\
&\qquad\qquad\qquad\times\|\mu^2(u)\langle u\rangle^2 e^{|u|}\|_{L^2_u}\|\theta^{1-2s}\|^{\frac{1}{2}}_{L^1_\theta}.
\end{align*}
For the second term,
\begin{align*}
     &\Big\{\int_{\mathbb{R}^3_{\xi, \eta, u}}\int_{-\pi /2}^{\pi /2}  \beta (\theta
) \mu^2(u) \sin^2\frac{\theta}{2}\frac{\langle u\rangle^4}{\cos^2(\theta/2)}\\
&\qquad\qquad\qquad\qquad\times | e^{t\tan{\frac{\theta}{2}}(|u|)}
\overline{\langle\eta\rangle^{r}\widehat{h}    }|^2\,du d\xi d\theta  d\eta\Big\}^{1/2}\\
&\leq C\| \langle\eta\rangle^{r}{\widehat{h}}(t,\eta,\xi)\|_{L^2_\eta(L^2_\xi)}\|\mu^2(u)\langle u\rangle^2 e^{|u|}\|_{L^2_u}\|\theta^{1-2s}\|^{\frac{1}{2}}_{L^1_\theta}.
\end{align*}
Hence, for $\frac 12\le s<1$, we also get
$$ 
 \Big|\Big([\mathcal{L}_1,\  M_{\delta}]g, h \Big)_{H^r_x(L^2_v)}\Big|
\leq C\| M_{\delta}g\|_{H^r_x(L^2_v)}\| h\|_{H^r_x(L^2_v)}.
$$
which ends the proof of Lemma \ref{2.4}.
\end{proof}

The commutators with $\mathcal{L}_2$ is simalry.
\begin{lemma}\label{2.4.1}
Assume that the cross-section satisfies \eqref{1.1+1} with $0<s<1$ and $r>1/2$, there exists $C>0$ such that
\begin{equation*}
\Big|\Big([\mathcal{L}_2,\  M_{\delta}]g,h \Big)_{H^r_x(L^2_v)}\Big|
\leq C\|M_{\delta} g\|_{H^r_x(L^2_x)}\| h\|_{H^r_x(L^2_x)}.
\end{equation*}
\end{lemma}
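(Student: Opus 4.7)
The strategy is to mirror the proof of Lemma 2.4 (the $\mathcal{L}_1$ case), swapping the roles of $\mu^{1/2}$ and $g$ inside $\mathcal{K}$. By Plancherel together with Corollary 3.4 applied to $\mathcal{K}(g,\mu^{1/2})$ and $\mathcal{K}(M_\delta g,\mu^{1/2})$, the inner product $\bigl([\mathcal{L}_2,M_\delta]g, h\bigr)_{H^r_x(L^2_v)}$ is rewritten as
$$
\frac{-1}{2\pi}\int_{\mathbb{R}^2_{\eta,\xi}}\langle\eta\rangle^{2r}\int_{-\pi/2}^{\pi/2}\beta(\theta)\int_{\mathbb{R}}\mu^2(u)\, R\, du\, d\theta\,\overline{\widehat h(\eta,\xi)}\, d\xi\, d\eta,
$$
where
$$
R = \bigl[M_\delta(t,\eta,u') - M_\delta(t,\eta,\xi)\bigr]\widehat g(\eta,u')\widehat{\mu^{1/2}}(\xi') - \bigl[M_\delta(t,\eta,u) - M_\delta(t,\eta,\xi)\bigr]\widehat g(\eta,u)\widehat{\mu^{1/2}}(\xi).
$$
Since $u'=u$ and $\xi'=\xi$ at $\theta=0$, one has $R|_{\theta=0}=0$; this vanishing is the source of the $\theta$-smallness that compensates the non-integrable singularity of $\beta(\theta)\sim\theta^{-1-2s}$.

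To exploit $R|_{\theta=0}=0$ efficiently, I telescope $M_\delta(t,\eta,u')-M_\delta(t,\eta,\xi)=[M_\delta(t,\eta,u')-M_\delta(t,\eta,u)]+[M_\delta(t,\eta,u)-M_\delta(t,\eta,\xi)]$, giving $R=R_1+R_2$ with
$$
R_1 = \bigl[M_\delta(t,\eta,u')-M_\delta(t,\eta,u)\bigr]\widehat g(\eta,u')\widehat{\mu^{1/2}}(\xi'),
$$
$$
R_2 = \bigl[M_\delta(t,\eta,u)-M_\delta(t,\eta,\xi)\bigr]\bigl\{\widehat g(\eta,u')\widehat{\mu^{1/2}}(\xi') - \widehat g(\eta,u)\widehat{\mu^{1/2}}(\xi)\bigr\}.
$$
Each summand vanishes at $\theta=0$ separately. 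In $R_1$, Taylor expansion in the weight argument yields $M_\delta(t,\eta,u')-M_\delta(t,\eta,u)=(u'-u)\int_0^1\partial_\xi M_\delta(t,\eta,u_\tau)\,d\tau$, and combining $|\partial_\xi M_\delta|\leq CM_\delta$ from Lemma 3.5 with the growth bound $M_\delta(t,\eta,u_\tau)/M_\delta(t,\eta,u')\leq 3e^{t|\tan(\theta/2)||u+u'|}$ (derived exactly as in the $\mathcal{L}_1$ proof) together with the explicit formula $u'-u = 2\sin(\theta/2)\bigl[\xi\cos(\theta/2)-u\sin(\theta/2)\bigr]$ produces the required $\sin(\theta/2)$ factor. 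In $R_2$, the curly bracket is the Bobylev integrand of $\mathcal{K}(g,\mu^{1/2})$, itself vanishing at $\theta=0$; the same Taylor analysis applied now to $\widehat g(\eta,\cdot)\widehat{\mu^{1/2}}(\cdot)$ (rather than to $M_\delta$) supplies its $\theta$-smallness, while the multiplier $M_\delta(t,\eta,u)-M_\delta(t,\eta,\xi)$ is controlled pointwise by the two Gaussians: $\mu^2(u)$ dominates $M_\delta(t,\eta,u)$ since $2\widetilde{s}\leq 1<2$, and $\widehat{\mu^{1/2}}(\xi')$ together with $|\xi|\leq|\xi'|/|\cos\theta|+|u||\tan\theta|$ controls $M_\delta(t,\eta,\xi)$ in the effective support.

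I split into the same two regimes as in Lemma 2.4. For $0<s<1/2$, first-order Taylor combined with $\beta(\theta)|\sin(\theta/2)|\sim\theta^{-2s}\in L^1([-\pi/2,\pi/2])$ suffices. For $1/2\leq s<1$, second-order Taylor together with the parity of $\beta$ (eliminating the odd first-order term via $\int\beta(\theta)(u'-u)F(u)\,du\,d\theta=0$ for suitable $F$) reduces the integrand to $\beta(\theta)\sin^2(\theta/2)\sim\theta^{1-2s}\in L^1$. In both regimes, Cauchy--Schwarz separates the integral, the orthogonal change of variables $(u',\xi')\leftrightarrow(u,\xi)$ (Jacobian $1$) converts $M_\delta(t,\eta,u')\widehat g(\eta,u')$ into $M_\delta(t,\eta,u)\widehat g(\eta,u)$, and Gaussian integrability of $\mu^2(u)\langle u\rangle^k e^{|u|}$ in $u$ closes the estimate in the form $C\|M_\delta g\|_{H^r_x(L^2_v)}\|h\|_{H^r_x(L^2_v)}$. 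The main obstacle, compared with the $\mathcal{L}_1$ case, is the piece $R_2$: the multiplier $M_\delta(t,\eta,u)-M_\delta(t,\eta,\xi)$ has no intrinsic $\theta$-smallness, so integrability must be inherited entirely from the Bobylev cancellation inside the bracket together with the simultaneous Gaussian decay in $u$ and $\xi'$.
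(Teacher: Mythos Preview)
The paper omits the proof entirely, asserting only that it is ``similar'' to Lemma~\ref{2.4}; so there is no argument to compare against, and one must assess your proposal on its own merits.

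Your Fourier setup and the decomposition $R=R_1+R_2$ are correct. The $R_1$ analysis is essentially sound, with one slip: since $u'-u=\tan(\theta/2)(\xi+\xi')$ (dual to the paper's $\xi-\xi'=\tan(\theta/2)(u+u')$), the growth factor should be $e^{c_0t|\tan(\theta/2)||\xi+\xi'|}$, not $e^{c_0t|\tan(\theta/2)||u+u'|}$. This is harmless because $\mu^2(u)\,\widehat{\mu^{1/2}}(\xi')=C\mu^2(u)\mu^2(\xi')=C\mu^2(\xi)\mu^2(u')$ provides Gaussian decay in all four of $u,u',\xi,\xi'$ simultaneously, so $|\xi+\xi'|$ is absorbed exactly as $|u+u'|$ was in Lemma~\ref{2.4}.

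The $R_2$ argument, however, has two genuine gaps. First, the assertion that ``$\mu^2(u)$ dominates $M_\delta(t,\eta,u)$'' is false: by \eqref{Ukai}, $\Psi_s(t,\eta,u)\sim c_0t\bigl(\langle u\rangle^{2\tilde s}+t^{2\tilde s}\langle\eta\rangle^{2\tilde s}\bigr)$, so $M_\delta(t,\eta,u)$ is unbounded in $\eta$ (uniformly in $\delta$), and no fixed Gaussian in $u$ can absorb it. What one does have is $M_\delta(\eta,\xi)\le e^{c_0t|\xi-u|^{2\tilde s}}M_\delta(\eta,u)$, which lets you move $M_\delta$ onto $\widehat g(u)$ at the cost of a factor controlled by $\mu^2(u)\mu^2(\xi)$; but this gives no $\theta$-smallness, so the $\beta$-singularity remains. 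Second, your proposed ``Taylor analysis applied to $\widehat g(\eta,\cdot)\widehat{\mu^{1/2}}(\cdot)$'' unavoidably differentiates $\widehat g$ in its frequency argument (at $\theta=0$ one has $\partial_\theta u'=\xi$), producing $\partial_\xi\widehat g=\widehat{(-iv)g}$. This $v$-moment of $g$ is not controlled by $\|M_\delta g\|_{H^r_x(L^2_v)}$, so the resulting constant cannot be uniform in $g$ as the lemma requires. In short, the $\theta$-cancellation in $R_2$ cannot be extracted the way you propose without either losing uniformity in $\eta$ or demanding regularity of $\widehat g$ that is unavailable; a different route for this piece is needed.
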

We omit the proof of this Lemma.

\section{Upper estimates of non linear operator} 

Now we turn attention to Proposition \ref{proposition2.3}, we have, by using \eqref{3.4bb}, with the aid of refined Bobylev formula of Corollary \ref{Kx},  
\begin{align*}
&\Big( M_{\delta}\mathcal{K}(f,\ g),\  h \Big)_{H^r_x(L^2_v)}=\Big( \langle \eta\rangle^r\mathcal{F}(M_{\delta}\mathcal{K}(f,\ g)),\  \langle \eta\rangle^r\mathcal{F}(h) \Big)_{L^2_{\eta, \xi}}\\
&=\int_{\mathbb{R}^4}\int_{-\frac{\pi}{2}}^{\frac{\pi}{2}}  \beta(\theta)\ M_{\delta}(t, \eta, \xi)\mu^{2}(u)\langle \eta\rangle^r 
\Big(\widehat{f}(t,\eta-\eta_1,u') \widehat{g}\left(t,\eta_1,\xi '\right)\\
&\qquad\qquad\qquad  -\widehat{f}(t,\eta-\eta_1,u) \widehat{g}\left(t,\eta_1,\xi \right)\Big)\,\overline{\langle \eta\rangle^r\widehat{h}(t,\eta,\xi) }\,d\theta d{\eta_1} du  d\xi d\eta\\
&=\int_{\mathbb{R}^4}\int_{-\frac{\pi}{2}}^{\frac{\pi}{2}} \beta(\theta)\ \langle \eta\rangle^{2r} \Big(M_{\delta}(t, \eta, \xi)\mu^{2}(u)
\widehat{f}(t,\eta-\eta_1,u') \widehat{g}\left(t,\eta_1,\xi '\right)\overline{\widehat{h}(t,\eta,\xi) }\\
&\qquad\qquad -M_{\delta}(t, \eta, \xi')\mu^{2}(u')\widehat{f}(t,\eta-\eta_1,u') \widehat{g}\left(t,\eta_1,\xi '\right)\overline{\widehat{h}(t,\eta,\xi') }\Big)\,\,d\theta d{\eta_1} du  d\xi d\eta,
\end{align*}
We get then
\begin{align*}
&\Big( M_{\delta}\mathcal{K}(f,\ g),\  h \Big)_{H^r_x(L^2_v)}\\
&=\int_{\mathbb{R}^4}\int_{-\frac{\pi}{2}}^{\frac{\pi}{2}}
\beta(\theta)\ \langle \eta\rangle^{2r} \widehat{f}(t,{\eta_1}, u')\widehat{g}(t,\eta-{\eta_1}, \xi')\Big(M_{\delta}(t, \eta, \xi)\mu^{2}(u)
\overline{\widehat{M_{\delta} h}(t,\eta,\xi) }\\
&\qquad\qquad\qquad\qquad -M_{\delta}(t, \eta, \xi')\mu^{2}(u')\overline{\widehat{ h}(t,\eta,\xi') }\Big)\,\,d\theta d{\eta_1} du  d\xi d\eta\\
\\
&=\int_{\mathbb{R}^4}\int_{-\frac{\pi}{2}}^{\frac{\pi}{2}} \beta(\theta)\ \langle \eta\rangle^{2r} \widehat{f}(t,{\eta_1}, u')\widehat{g}(t,\eta-{\eta_1}, \xi')\Big(M_{\delta}(t, \eta, \xi)\mu^{2}(u)
\overline{\widehat{ h}(t,\eta,\xi) }\\
&\qquad\qquad -M_{\delta}(t, \eta, \xi')\mu^{2}(u)
\overline{\widehat{ h}(t,\eta,\xi) }
+M_{\delta}(t, \eta, \xi')\mu^{2}(u)
\overline{\widehat{ h}(t,\eta,\xi) }
\\
&\qquad\qquad -M_{\delta}(t, \eta, \xi')\mu^{2}(u')
\overline{\widehat{ h}(t,\eta,\xi) }
+M_{\delta}(t, \eta, \xi')\mu^{2}(u')
\overline{\widehat{ h}(t,\eta,\xi) }
\\
&\qquad\qquad\qquad\qquad -M_{\delta}(t, \eta, \xi')\mu^{2}(u')\overline{\widehat{ h}(t,\eta,\xi') }\Big)\,\,d\theta d{\eta_1} du  d\xi d\eta\\
&=A_1+A_2+A_3,
\end{align*}
with
\begin{align*}
A_1&=\int_{\mathbb{R}^4}\int_{-\frac{\pi}{2}}^{\frac{\pi}{2}} \beta(\theta)\ \langle \eta\rangle^{2r} \widehat{f}(t,{\eta_1}, u')\widehat{g}(t,\eta-{\eta_1}, \xi')\mu^{2}(u)\\
&\qquad\quad\qquad\quad\times \Big(M_{\delta}(t, \eta, \xi)
 -M_{\delta}(t, \eta, \xi')\Big)
\overline{\widehat{ h}(t,\eta,\xi) }\,d\theta d{\eta_1} du  d\xi d\eta\, ,
\end{align*}
\begin{align*}
A_2&=\int_{\mathbb{R}^4}\int_{-\frac{\pi}{2}}^{\frac{\pi}{2}} \beta(\theta)\ \langle \eta\rangle^{2r} \widehat{f}(t,{\eta_1}, u')\widehat{g}(t,\eta-{\eta_1}, \xi')M_{\delta}(t, \eta, \xi')\\ 
&\qquad\quad\qquad\quad\times \Big( \mu^{2}(u)-\mu^{2}(u')\Big)
\overline{\widehat{ h}(t,\eta,\xi) }\,d\theta d{\eta_1} du  d\xi d\eta\, ,
\end{align*}
\begin{align*}
A_3&=\int_{\mathbb{R}^4}\int_{-\frac{\pi}{2}}^{\frac{\pi}{2}} \beta(\theta)\ \langle \eta\rangle^{2r} \widehat{f}(t,{\eta_1}, u')\widehat{g}(t,\eta-{\eta_1}, \xi')M_{\delta}(t, \eta, \xi')\mu^{2}(u')\\ 
&\qquad\quad\qquad\quad\times \Big(
\overline{\widehat{ h}(t,\eta,\xi) }-\overline{\widehat{ h}(t,\eta,\xi') }\Big)\,d\theta d{\eta_1} du  d\xi d\eta\, .
\end{align*}
We study now this third terms by the following Proposition. 
\begin{proposition}
We have the following estimates:
\begin{equation*}
|A_1|\le C\|M_\delta f\|_{H^r_x(L^2_v)}\|M_\delta g\|_{H^r_x(L^2_v)}\| h\|_{H^r_x(H^s_v)},
\end{equation*}
\begin{equation*}
|A_2|\le C\|M_\delta f\|_{H^r_x(L^2_v)}|||M_\delta g|||_{(r, 0)}||| h|||_{(r, 0)},
\end{equation*}
and
\begin{equation}\label{A-3}
|A_3|\le C\|M_\delta f\|_{H^r_x(L^2_v)}\|M_\delta g\|_{H^r_x(L^2_v)}\| h\|_{H^r_x(L^2_v)}.
\end{equation}
\end{proposition}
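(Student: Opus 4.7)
Each of $A_1, A_2, A_3$ isolates one way in which the Bobylev rotation $(u, \xi)\mapsto(u', \xi')$ is broken by the insertion of $M_\delta$, $\mu^2$, or $\widehat h$, and my overall strategy is uniform: convert the offending difference into a Taylor remainder in $\theta$, use Lemma \ref{7num} (combined with Lemma \ref{M} to absorb derivatives of $M_\delta$) to transfer $M_\delta$ onto the individual factors $\widehat f$ and $\widehat g$, and recognize the resulting integral as a bounded trilinear expression covered by Corollary \ref{huagamma}. The Gaussian residue $\mu^2(u)e^{\langle u\rangle^{2\widetilde s}}\le C\mu(u)\le C\mu^{1/4}(u)$ produced by Lemma \ref{7num} always meets the Maxwellian hypothesis of that corollary. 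The case $s\ge\tfrac12$ forces a second-order Taylor expansion, whose first-order term is killed by the $\theta\mapsto-\theta$ symmetry of $\beta$, following the pattern of Lemma \ref{2.4}.

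For $A_1$, the Taylor expansion of $M_\delta(\eta, \xi)-M_\delta(\eta, \xi')$ together with $|\xi-\xi'|\le C|\sin(\theta/2)|\langle u\rangle\langle u'\rangle$ produces a factor $|\sin(\theta/2)|^j$; splitting it carefully between the $\theta$ integration and a power of $\langle\xi\rangle$ on $\widehat h$ (via the Bobylev inversion $\xi=\xi'\cos\theta+u'\sin\theta$) yields the $H^s_v$ weight on $\widehat h$ appearing on the right of the target bound, while the remaining Fourier factors assemble into $\|M_\delta f\|_{H^r_x(L^2_v)}\|M_\delta g\|_{H^r_x(L^2_v)}$ by Cauchy--Schwarz and Plancherel after invoking Lemma \ref{7num}. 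The same template, applied now to $\mu^2(u)-\mu^2(u')=\int_0^1(u-u')\partial_u\mu^2(u_\tau)\,d\tau$ together with $|u-u'|\le C|\sin(\theta/2)|(\langle u\rangle+\langle\xi\rangle)$, handles $A_2$: the $\langle u\rangle$ contribution is swallowed by the Gaussian residue, while the $|\sin(\theta/2)|^j\langle\xi\rangle^j$ contribution is shared between $\widehat g$ and $\widehat h$ so that each receives a factor controlled by $|||\cdot|||_{(r,0)}$ through \eqref{2.2.1xc}. Transferring $M_\delta(\eta, \xi')$ onto $\widehat g$ by Lemma \ref{7num} at $\tau=0$ and applying Corollary \ref{huagamma} with the Maxwellian weight $\mu^{3/2}$ then closes the $A_2$ estimate.

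The genuinely delicate piece is $A_3$, where $\overline{\widehat h(\eta, \xi)}-\overline{\widehat h(\eta, \xi')}$ carries no built-in smallness and a Taylor expansion on $\widehat h$ would spend regularity that \eqref{A-3} does not grant. My plan is to exploit the orthogonality of the Bobylev rotation instead: the change of variables $(u, \xi)\mapsto(u', \xi')$ (Jacobian one) in the $\overline{\widehat h(\eta, \xi')}$ half of $A_3$ collapses it to the gain--loss form
\begin{align*}
A_3=\int\beta(\theta)\langle\eta\rangle^{2r}\Bigl[&\widehat f(\eta_1, u')\widehat g(\eta-\eta_1, \xi')M_\delta(\eta, \xi')\mu^2(u')\\
&-\widehat f(\eta_1, u)\widehat g(\eta-\eta_1, \xi)M_\delta(\eta, \xi)\mu^2(u)\Bigr]\overline{\widehat h(\eta, \xi)}\,d\theta\,d\eta_1\,du\,d\xi\,d\eta.
\end{align*}
Transferring the two $M_\delta$ weights onto $\widehat f$ and $\widehat g$ on both sides of the bracket, via Lemma \ref{7num} for the gain and its symmetric variant centered at $(\xi, u)$ for the loss, the bracket reduces, modulo the harmless Gaussian tail $\le C\mu^{1/4}$, to the Bobylev symbol of $\mathcal T(M_\delta f, M_\delta g, \omega)$, so Corollary \ref{huagamma} delivers \eqref{A-3} with only $L^2_v$ norms on all three arguments. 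The main obstacle throughout is precisely the $s\ge\tfrac12$ range, where the single factor of $\sin(\theta/2)$ coming out of Taylor is no longer enough to tame $\beta(\theta)\sim|\theta|^{-1-2s}$, and the $\theta$-symmetry cancellation of the first-order term must be leveraged to access a second-order expansion with integrable weight; once this mechanism is in place, the three estimates go through in parallel.
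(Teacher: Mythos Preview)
Your treatment of $A_3$ contains a genuine gap. After the orthogonal change of variables you correctly obtain the gain--loss form, but the step ``transferring the two $M_\delta$ weights onto $\widehat f$ and $\widehat g$ via Lemma~\ref{7num}'' cannot work: Lemma~\ref{7num} is an \emph{inequality}, so applying it separately to the gain and to the loss amounts to bounding $|\text{gain}-\text{loss}|$ by $|\text{gain}|+|\text{loss}|$, which throws away exactly the cancellation that makes $\int\beta(\theta)\,d\theta$ finite. Moreover, even formally the bracket is not the Bobylev symbol of any $\mathcal T(M_\delta f,M_\delta g,\omega)$: that symbol carries a common weight $\mu^{1/\alpha}(u)$ outside the bracket, whereas your expression has $\mu^2(u')$ in the gain and $\mu^2(u)$ in the loss, and $M_\delta(\eta,\cdot)$ (depending on the total $\eta$) rather than $M_\delta(\eta_1,\cdot)M_\delta(\eta-\eta_1,\cdot)$. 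The paper instead invokes the Cancellation Lemma (a change of variables in $\xi$ alone, with Jacobian $1/\cos\theta$) to extract the explicit factor $1-1/\cos\theta\sim\sin^2(\theta/2)$ \emph{before} any pointwise bounds are used; once $\beta(\theta)\sin^2(\theta/2)\sim\theta^{1-2s}$ is integrable, the subadditivity $M_\delta(\eta,\xi)\le C\,M_\delta(\eta_1,\xi)M_\delta(\eta-\eta_1,u)e^{t|u|}$ can be applied safely.

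Your outline for $A_1$ is also incomplete. The bound $|\xi-\xi'|\le C|\sin(\theta/2)|\langle u\rangle\langle u'\rangle$ leaves a factor $\langle u'\rangle$ that is \emph{not} absorbed by $\mu^2(u)$, since $u'=\xi\sin\theta+u\cos\theta$ can be of size $|\xi\sin\theta|$; tracing this through gives a weight $\sin^2(\theta/2)|\xi|$, and $\int_0^{\pi/2}\beta(\theta)\sin^2(\theta/2)\,d\theta\cdot|\xi|\sim|\xi|$ produces an $H^1_v$ norm on $h$ rather than $H^s_v$. The paper resolves this with a dyadic splitting $|\theta|\lessgtr|\xi|^{-1/2}$: on the small-angle region the Taylor expansion combined with $\int_0^{|\xi|^{-1/2}}\beta\sin^2(\theta/2)\,d\theta\le C\langle\xi\rangle^{s-1}$ gives $|\xi|\cdot\langle\xi\rangle^{s-1}=\langle\xi\rangle^s$, while on the large-angle region one bounds $|M_\delta(\xi)-M_\delta(\xi')|$ crudely and uses $\int_{|\xi|^{-1/2}}^{\pi/2}\beta\,d\theta\le C\langle\xi\rangle^s$. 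Your phrase ``splitting it carefully between the $\theta$ integration and a power of $\langle\xi\rangle$'' gestures at this but does not supply it; without the explicit threshold, the estimate does not close.
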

With this Proposition, by using \eqref{2.2.1xc}, we can then get the Proposition \ref{proposition2.3},

\begin{proof} We consider first \eqref{A-3}, we can prove this it by Cancellation Lemma, in fact, 
by variable change $\xi'\to\xi$, see \cite{LX}, page 651.
\begin{align*}
    |A_3|&\leq\Big|\int_{\mathbb{R}^4}\int_{-\frac{\pi}{2}}^{\frac{\pi}{2}} \beta(\theta) M_{\delta}(t, \eta, \xi)\mu^{2}(u)
\langle \eta\rangle^r \widehat{f}(t,{\eta_1}, u)\widehat{g}(t,\eta-{\eta_1}, \xi)d{\eta_1}\\
&\qquad\qquad\times\overline{\langle \eta\rangle^r \widehat{M_\delta h}(t,\eta,\xi)} \Big\{1-\frac{1}{\cos(\theta)}\Big\}\,d\theta d{\eta_1} du  d\xi d\eta\Big|
\end{align*}
Note that \begin{align*}
    \sqrt{1+(\rho\eta+\xi)^2}&=\sqrt{1+(\rho{\eta_1}+\rho(\eta-{\eta_1})+\xi+u-u)^2}\\&\leq\sqrt{1+(\xi+\rho{\eta_1})^2}+\sqrt{1+(u+\rho(\eta-{\eta_1}))^2}+\sqrt{u}
\end{align*}
and
\begin{align*}
    \frac{e^{X+Y}}{1+\delta e^{X+Y}}\le 3\frac{e^{Y}}{1+\delta e^{Y}}\frac{e^{X}}{1+\delta e^{X}},
\end{align*}
we obtain
\begin{align*}
     M_{\delta}(t, \eta, \xi)\le 3 M_{\delta}(t, \eta_1, \xi)M_{\delta}(t, \eta-{\eta_1}, u)e^{t|u|}.
\end{align*}
Hence we have
\begin{align*}
    |A_3|&\leq\int_{\mathbb{R}^4_{\eta,\xi, u,\eta_1}}\int_{-\frac{\pi}{2}}^{\frac{\pi}{2}} \beta(\theta)\frac{\sin^2(\frac{\theta}{2})}{\cos\theta} M_{\delta}(t, \eta, \xi)\mu^{2}(u)
\{\langle \eta_1\rangle^r+\langle \eta-\eta_1\rangle^r\}\\
&\qquad\qquad\times\Big|\widehat{g}(t,{\eta_1},\xi)\widehat{f}(t,\eta-{\eta_1},u)d{\eta_1}\overline{\langle \eta\rangle^r\widehat{ h}(t,\eta,\xi) }\Big|d\theta d\xi d\eta\\
&\leq  C\|M_{\delta}f\|_{H^r_x(L^2_v)}\|M_{\delta}g\|_{L^{\infty}_x(L^2_v)}\, \| h\|_{H^r_x(L^2_v)}\\   &\qquad+C\|M_{\delta}f\|_{L^{\infty}_x(L^2_v)}\|M_{\delta}g\|_{H^r_x(L^2_v)}\, \|h\|  _{H^r_x(L^2_v)} .
\end{align*}
We finish the proof by the embedding $L^\infty(\RR_x)\subset H^{r}(\RR_x)$. Here we use 
$$
\beta(\theta)\frac{\sin^2(\frac{\theta}{2})}{\cos\theta} \ \sim\ \theta^{1-2s},
$$
is integrable over $[-\pi/2,\ \pi/2]$ for $0<s<1$.
\end{proof}

For the terms $A_1,\,\,A_2$, we have, note that $\widehat{f}$ can be divided into the odd part and the even part for $\xi$. Such as
\begin{align*}
    \widehat{f}(t,\eta,\xi)=\frac{\widehat{f}(t,\eta,\xi)+\widehat{f}(t,\eta,-\xi)}{2}+\frac{\widehat{f}(t,\eta,\xi)-\widehat{f}(t,\eta,-\xi)}{2},
\end{align*}
where\begin{align*}
     \widehat{f}_{even}(t,\eta,\xi)=\frac{\widehat{f}(t,\eta,\xi)+\widehat{f}(t,\eta,-\xi)}{2},\,\,\,\widehat{f}_{odd}(t,\eta,\xi)=\frac{\widehat{f}(t,\eta,\xi)-\widehat{f}(t,\eta,-\xi)}{2}
\end{align*}
\begin{lemma}\label{even}
 For suitable function $f$
 \begin{align*}
&\int_{\mathbb{R}^4_{\eta,\xi, u,\eta_1}}\int_{-\frac{\pi}{2}}^{\frac{\pi}{2}} \beta(\theta)\big\{\ M_{\delta}(t, \eta, \xi)-M_{\delta}(t, \eta, \xi')\big\}\mu^{2}(u)\\
&\qquad\qquad\times\langle \eta\rangle^r \widehat{g}(t,{\eta_1},\xi')\widehat{f}(t,\eta-{\eta_1},u')d{\eta_1}\overline{\langle \eta\rangle^r\widehat{h}(t,\eta,\xi) }d\theta d\xi d\eta\\
&=\int_{\mathbb{R}^4_{\eta,\xi, u,\eta_1}}\int_{-\frac{\pi}{2}}^{\frac{\pi}{2}}  \beta(\theta)\big\{\ M_{\delta}(t, \eta, \xi)-M_{\delta}(t, \eta, \xi')\big\}\mu^{2}(u)\\
&\qquad\qquad\times\langle \eta\rangle^r \widehat{g}(t,{\eta_1},\xi')\widehat{f}_{even}(t,\eta-{\eta_1},u')d{\eta_1}\overline{\langle \eta\rangle^r\widehat{h}(t,\eta,\xi) }d\theta d\xi d\eta.
\end{align*}
\end{lemma}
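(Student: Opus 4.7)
The plan is a pure parity argument: I will show that, after a well-chosen involution of the integration variables, the odd part of $\widehat{f}$ (in its third, velocity-Fourier argument) contributes zero to the integral. The involution is the simultaneous change of variables $(\theta, u) \mapsto (-\theta, -u)$, which preserves the integration domain $[-\pi/2, \pi/2] \times \mathbb{R}_u$ and has unit Jacobian.

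First I would verify how the post-collisional variables $u' = \xi\sin\theta + u\cos\theta$ and $\xi' = \xi\cos\theta - u\sin\theta$ transform under this substitution. A direct computation yields
\begin{equation*}
u'(-\theta, -u) = -u', \qquad \xi'(-\theta, -u) = \xi',
\end{equation*}
so $\xi'$ is invariant while $u'$ changes sign. Then I would check that every factor in the integrand other than $\widehat{f}(t, \eta - \eta_1, u')$ is invariant: $\beta(\theta)$ is even in $\theta$ by \eqref{1.1+1}; $\mu^2(u)$ is even in $u$; the weight $\langle\eta\rangle^{2r}$, the multiplier $M_\delta(t,\eta,\xi)$, and the factor $\widehat{h}(t,\eta,\xi)$ do not involve $(\theta, u)$; and $M_\delta(t,\eta,\xi')$ together with $\widehat{g}(t,\eta_1,\xi')$ depend on $(\theta, u)$ only through $\xi'$, hence are unchanged.

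Next, I would split $\widehat{f} = \widehat{f}_{even} + \widehat{f}_{odd}$ and denote by $I_{odd}$ the integral on the left-hand side with $\widehat{f}$ replaced by $\widehat{f}_{odd}$. Applying the substitution $(\theta, u) \mapsto (-\theta, -u)$ to $I_{odd}$ leaves every factor untouched except $\widehat{f}_{odd}(t, \eta - \eta_1, u')$, which becomes $\widehat{f}_{odd}(t, \eta - \eta_1, -u') = -\widehat{f}_{odd}(t, \eta - \eta_1, u')$ by definition of the odd part. Therefore $I_{odd} = -I_{odd}$, so $I_{odd} = 0$, and the original integral coincides with its analogue for $\widehat{f}_{even}$, which is exactly the claim.

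No serious obstacle is anticipated: the argument is a plain symmetry computation. The only subtlety is to justify the change of variables and the use of Fubini rigorously; this is supplied by the a priori regularity \eqref{2.10}--\eqref{2.11} on the approximation sequence, the Gaussian decay coming from $\mu^2(u)$, and the fact that $M_\delta$ is a bounded Fourier multiplier of order $0$, which together provide ample integrability in $(\eta, \xi, u, \eta_1, \theta)$.
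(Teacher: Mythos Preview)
Your argument is correct and coincides with the paper's own proof, which simply states that the lemma follows from the change of variable $(\xi,u,\theta)\to(\xi,-u,-\theta)$; you have merely spelled out the verification that each factor other than $\widehat f(\cdot,\cdot,u')$ is invariant under this involution. The only point worth flagging is that absolute integrability in $\theta$ (needed for Fubini) fails when $\tfrac12\le s<1$, so the identity should be understood either after a cut-off/regularization or as part of the subsequent Taylor-expansion analysis---but the paper treats this equally formally.
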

We can prove this Lemma by the changing of variable:  $(\xi,u,\theta)\to(\xi,-u,-\theta)$, so that  we can consider only the even part of $f$ for the terms $A_1$. 

\begin{lemma}\label{lemma5-3}
For $0<s<1$, we have
\begin{align*}
    &|A_1|\le C\|M_{\delta}f\|_{H^r_x(L^2_v)}\|M_{\delta}g\|_{H^r_x(L^2_v)}\|h\|_{H^r_x(H^s_v)}.
\end{align*}
\end{lemma}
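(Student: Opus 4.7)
The plan is to adapt the commutator analysis of Lemma \ref{2.4} to the trilinear setting of $A_1$, using the additional flexibility allowed by the $H^s_v$ norm on $h$. First, by Lemma \ref{even}, one may replace $\widehat f$ by its even part in $\xi$ inside $A_1$; this leaves $A_1$ unchanged and is essential for the cancellation in the second-order case.

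Next, split according to the size of $s$. For $0<s<1/2$, apply a first-order Taylor expansion of $M_\delta(t,\eta,\cdot)$ around $\xi'$, producing the factor $|\xi-\xi'|=|\tan(\theta/2)||u+u'|$ which, combined with $\beta(\theta)$, yields an angular kernel of order $\theta^{-2s}$, integrable on $(-\pi/2,\pi/2)$. For $1/2\le s<1$, apply a second-order Taylor expansion; the first-order term vanishes after the substitution $(u,\theta)\to(-u,-\theta)$ combined with the evenness from Step 1, leaving a quadratic piece whose angular behavior $\theta^{1-2s}$ is integrable for $s<1$.

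In either case, bound the derivatives by $|\partial_\xi^k M_\delta(t,\eta,\xi_\tau)|\le C M_\delta(t,\eta,\xi_\tau)$ via Lemma \ref{M}, then transfer the weight through Lemma \ref{7num}:
$$M_\delta(t,\eta,\xi_\tau)\le CM_\delta(t,\eta_1,\xi')\,\max_{\pm}M_\delta(t,\eta-\eta_1,\pm u')\,e^{\langle u\rangle^{2\tilde s}},$$
so the $M_\delta$-weights attach to $\widehat g(\xi')$ and $\widehat f(u')$ respectively, and $e^{\langle u\rangle^{2\tilde s}}$ is absorbed by $\mu^2(u)$. Distribute $\langle\eta\rangle^r\le C(\langle\eta_1\rangle^r+\langle\eta-\eta_1\rangle^r)$ so that $g$ picks up $\langle\eta_1\rangle^r$ and $f$ picks up $\langle\eta-\eta_1\rangle^r$. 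Use $|u+u'|\le 4\langle u\rangle\langle u'\rangle$ so the polynomial growth in $u$ is controlled by $\mu^2(u)$.

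Finally apply Cauchy--Schwarz, splitting the integrand into a factor containing $\sqrt{\beta(\theta)}|\xi-\xi'|^{k/2}M_\delta\widehat g(\xi')\mu(u)$ and a factor containing $\sqrt{\beta(\theta)}|\xi-\xi'|^{k/2}M_\delta\widehat f(u')\mu(u)\widehat h(\xi)$, with $k=1$ or $k=2$. The orthogonal change of variables $(u,\xi)\to(u',\xi')$ (Jacobian $1$) converts the integrals of the primed factors into $\|M_\delta g\|_{H^r_x L^2_v}$ and $\|M_\delta f\|_{H^r_x L^2_v}$. The residual $\xi$-weight on $h$ is controlled via Lemma \ref{bd}: the identity $\xi^2+u^2=\xi'^2+u'^2$ gives $\langle\xi\rangle^{2s}\le 2^{1-s}(\langle\xi'\rangle^{2s}+\langle u'\rangle^{2s})$ for $0<s\le 1$, so the weight accruing to $h$ after the variable change is bounded by $\langle\xi\rangle^s$, producing the $H^s_v$ norm of $h$. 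The main technical obstacle is bookkeeping this Cauchy--Schwarz split so that simultaneously (i) the $\theta$-integral converges at the correct order for each case in $s$, (ii) the polynomial growth in $u$ is controlled by $\mu^2(u)e^{\langle u\rangle^{2\tilde s}}$, and (iii) the residual $\xi$-weight on $h$ stays bounded by $\langle\xi\rangle^s$ rather than $\langle\xi\rangle^{2s}$ or worse.
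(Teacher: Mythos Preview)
Your plan transplants the commutator argument of Lemma~\ref{2.4} verbatim to $A_1$, but this overlooks the structural difference between the linear and nonlinear settings and misses the paper's key decomposition.

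The crucial problem is your treatment of $|\xi-\xi'|=|\tan(\theta/2)||u+u'|$ together with $|u+u'|\le 4\langle u\rangle\langle u'\rangle$. In Lemma~\ref{2.4} the factor $\langle u'\rangle$ is harmless because there $\widehat f(u')=\widehat{\mu^{1/2}}(u')=C\mu^2(u')$ supplies Gaussian decay in $u'$. In $A_1$, $\widehat f$ is general and only $\mu^2(u)$ is present, so $\langle u'\rangle$ cannot be absorbed: it would land on $\widehat f(u')$ and, after the orthogonal change of variables, produce $\|M_\delta f\|_{H^r_x(H^1_v)}$ rather than $\|M_\delta f\|_{H^r_x(L^2_v)}$. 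Equivalently, if you instead expand $|u+u'|\le 2|u|+|\xi\sin\theta|$, the term $\beta(\theta)\,\theta\cdot|\xi||\theta|\sim\theta^{1-2s}|\xi|$ integrated over the full angular range yields a weight $|\xi|$ on $h$ (or split between $g,h$), not $\langle\xi\rangle^s$; Lemma~\ref{bd} alone does not reduce this to $\langle\xi\rangle^s$.

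The paper resolves this by introducing an angular threshold, splitting $\{|\theta|\le|\xi|^{-1/2}\}$ versus $\{|\theta|\ge|\xi|^{-1/2}\}$ (together with the sign of $M_\delta(t,\eta-\eta_1,\pm u')$, giving the four regions $\Theta_1,\dots,\Theta_4$). On the large-$\theta$ regions $\Theta_3,\Theta_4$ no Taylor expansion is used; the direct bound $\int_{|\theta|\ge|\xi|^{-1/2}}\beta(\theta)\,d\theta\lesssim\langle\xi\rangle^{s}$ produces exactly the $H^s_v$ weight on $h$, and Lemma~\ref{bd} is used here to pass the resulting $\langle\xi'\rangle^s+\langle u'\rangle^s$ back to $\langle\xi\rangle^s$. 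On the small-$\theta$ regions one writes $\xi'-\xi=\tfrac12\sin^2(\theta/2)\,\xi-\sin\theta\,u$ (not in terms of $u'$): the $u$-piece is absorbed by $\mu^2(u)$, and for the $\xi$-piece the threshold gives $\int_0^{|\xi|^{-1/2}}\beta(\theta)\sin^2(\theta/2)\,d\theta\lesssim\langle\xi\rangle^{s-1}$, so $|\xi|\cdot\langle\xi\rangle^{s-1}=\langle\xi\rangle^{s}$ again lands on $h$. For $1/2\le s<1$ the same threshold is used with the second-order expansion and the decomposition $(\xi'-\xi)^2=\tfrac14\sin^4(\theta/2)\xi^2+\sin^2\theta\,u^2+\sin^2(\theta/2)\sin\theta\,\xi u$. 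Your argument for the vanishing of the first-order term via $(u,\theta)\to(-u,-\theta)$ is also not correct as stated: under that substitution $\xi'$ is fixed, $u'\to -u'$, and with $\widehat f_{even}$ the integrand is invariant, not odd; the paper instead invokes the identity $\iint\beta(\theta)(\xi-\xi')F(\xi')\,d\theta\,d\xi=0$. In short, the missing idea is the $|\theta|\lessgtr|\xi|^{-1/2}$ dyadic-in-$\xi$ splitting combined with the $(\xi,u)$ (rather than $(u,u')$) representation of $\xi-\xi'$.
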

\begin{proof}
For $A_1$, we cannot directly use the method in Lemma \ref{2.4},  because we cannot control $e^{|u'|}=e^{|\xi \sin\theta + u \cos\theta|}$ in whole space. 

We divide the space $\mathbb{R}^3_{\eta,\xi, u}\times [-\frac{\pi}{2},\ \frac{\pi}{2}]$ into four parts (depends on $\eta_1$). 
\begin{align*}
    \Theta_1&=\{|\theta|\le|\xi|^{-\frac 12};\ M_{\delta}(t,\eta-\eta_1,u')\ge M_{\delta}(t,\eta-\eta_1,-u') \}\\
    \Theta_2&=\{|\theta|\le|\xi|^{-\frac 12};\ M_{\delta}(t,\eta-\eta_1,u')\le M_{\delta}(t,\eta-\eta_1,-u') \}\\
    \Theta_3&=\{|\theta|\ge |\xi|^{-\frac 12};\ M_{\delta}(t,\eta-\eta_1,u')\ge M_{\delta}(t,\eta-\eta_1,-u')\}\\
        \Theta_4&=\{|\theta|\ge |\xi|^{-\frac 12};\ M_{\delta}(t,\eta-\eta_1,u')\le M_{\delta}(t,\eta-\eta_1,-u')\}.
\end{align*}
Then
\begin{align*}
A_1&=\sum^4_{j=1}\int_{ \Theta_j}\int_{\mathbb{R}_{\eta_1}}\beta(\theta)\ \langle \eta\rangle^{2r} \widehat{f}(t,{\eta_1}, u')\widehat{g}(t,\eta-{\eta_1}, \xi')\mu^{2}(u)\\
&\qquad\quad\qquad\quad\times \big(M_{\delta}(t, \eta, \xi)
 -M_{\delta}(t, \eta, \xi')\big)
\overline{\widehat{ h}(t,\eta,\xi) }\,d{\eta_1}d\theta du  d\xi d\eta\\
&=A_{11}+\cdots A_{14}\, .
\end{align*}
There is not the singularity of $\beta(\theta)$ for the terms $A_{13}, A_{1, 4}$, so we use Lemma \ref{even} to estimate directly,
\begin{align*}
|A_{13}|&\le \int_{ \Theta_3}\int_{\mathbb{R}_{\eta_1}}\beta(\theta)\Big|\ M_{\delta}(t, \eta, \xi)-M_{\delta}(t, \eta, \xi')\Big|\mu^{2}(u)\\
&\qquad\qquad\times\langle \eta\rangle^r |\widehat{g}(t,{\eta_1},\xi')\widehat{f}_{even}(t,\eta-{\eta_1},u')\overline{\langle \eta\rangle^r\widehat{h}(t,\eta,\xi) }|d{\eta_1}d\theta du d\xi d\eta\\
&\le\int_{ \Theta_3}\int_{\mathbb{R}_{\eta_1}}\beta(\theta)\max\Big\{\ M_{\delta}(t, \eta, \xi),M_{\delta}(t, \eta, \xi')\big\}\mu^{2}(u)\\
&\qquad\qquad\times\langle \eta\rangle^r |\widehat{g}(t,{\eta_1},\xi')\widehat{f}_{even}(t,\eta-{\eta_1},u')\overline{\langle \eta\rangle^r\widehat{h}(t,\eta,\xi) }|d{\eta_1}d\theta du d\xi d\eta.
\end{align*}
Note that
\begin{align*}
    \int_{|\theta|\ge |\xi|^{-\frac 12}}\beta(\theta)d\theta\le C \theta^{-2s}\Big|^{\pi/2}_{|\xi|^{-\frac 12}}\le C\langle\xi\rangle^{{s}}
\end{align*}
and using Lemma \ref{7num},
\begin{align*}
|A_{13}|&\le C\Big\{\int_{ \Theta_3}\beta(\theta)(\langle\xi'\rangle^{s}+\langle u'\rangle^{s})^{-1}\Big|\int_{{\eta_1}}M_{\delta}(t,{\eta_1},\xi')M_{\delta}(t,\eta-{\eta_1},u')\\
&\qquad\qquad\times\langle \eta\rangle^r \widehat{g}(t,{\eta_1},\xi')\widehat{f}_{even}(t,\eta-{\eta_1},u')d{\eta_1}\Big|^{2} du d\xi d\eta d\theta\Big\}^{1/2}\\
&\qquad\qquad\times\Big\{\int_{ \Theta_3}\beta(\theta)(\langle\xi'\rangle^{s}+\langle u'\rangle^{s})\Big|\mu^{2}(u)e^{t\langle u\rangle^{2\tilde{s}}}\langle \eta\rangle^r\widehat{h}(t,\eta,\xi)|^2  du d\xi d\eta d\theta\Big\}^{1/2}\\
&\le A_{131}\times A_{132}
\end{align*}
For the second term $A_{132}$, by using of Lemma \ref{bd},
\begin{align*}
   A_{132}&=\Big\{\int_{ \Theta_3}\beta(\theta)(\langle\xi'\rangle^{s}+\langle u'\rangle^{s})\Big|\mu^{2}(u)e^{t\langle u\rangle^s}\langle \eta\rangle^r\widehat{h}(t,\eta,\xi)\Big|^2   d\theta du d\xi  d\eta\Big\}^{1/2}\\
   &\le C\Big\{\int_{\Theta_3} \beta(\theta)(\langle\xi\rangle^{s}+\langle u\rangle^{s})\Big|\mu^{2}(u)e^{t\langle u\rangle^s}\langle \eta\rangle^r\widehat{h}(t,\eta,\xi)\Big|^2   d\theta du d\xi  d\eta\Big\}^{1/2}\\
      &\le C\Big\{\int_{\mathbb{R}^3_{\eta,\xi, u}}(\langle\xi\rangle^{2s}\langle u\rangle^{s}\Big|\mu^{2}(u)e^{t\langle u\rangle^s}\langle \eta\rangle^r\widehat{h}(t,\eta,\xi)\Big|^2  du d\xi  d\eta\Big\}^{1/2}\\
   &\le C\|h\|_{H^r_x(H^s_v)}\|\langle u\rangle^{s/2}\mu^{2}(u)e^{t\langle u\rangle^{2\tilde{s}}}\|_{L^2_u}\le C\|h\|_{H^r_x(H^s_v)}. 
\end{align*}
By variable change $(\xi',u')\to(\xi,u)$, for the first term $A_{131}$, we just need to use 
\begin{align*}
    &\langle\xi\rangle^{s'}(\langle\xi\rangle^{s}+\langle u\rangle^{s})^{-1}\le(\langle\cos\theta\xi\rangle^{s}+\langle \sin\theta u\rangle^{s})(\langle\xi\rangle^{s}+\langle u\rangle^{s})^{-1}\\
    &\ \ \ \le(\langle\xi\rangle^{s}+\langle u\rangle^{s})(\langle\xi\rangle^{s}+\langle u\rangle^{s})^{-1}\le 1,
\end{align*}
Then 
\begin{align*}
    &|A_{13}|\le C\|M_{\delta}f\|_{H^r_x(L^2_v)}\|M_{\delta}g\|_{H^r_x(L^2_v)}\|h\|_{H^r_x(H^s_v)}
\end{align*}
We similarly have
\begin{align*}
    &|A_{14}|\le C\|M_{\delta}f\|_{H^r_x(L^2_v)}\|M_{\delta}g\|_{H^r_x(L^2_v)}\|h\|_{H^r_x(H^s_v)}.
\end{align*}

Now for the term $A_{11}$ and $A_{12}$, we need to cancell the singularity of $\beta(\theta)$, two terms are  similary, so we study only the term $A_{11}$. We need now to consider two cases : mild singularity case of 
$0<s<\frac 12$, and strong singularity case of $\frac 12\le s<1$.

\noindent
{\bf Mild singularity case of $0<s<\frac 12$}

Using now the first-order Taylor formula, and $\xi'-\xi=\frac{1}{2}\sin^2(\frac{\theta}{2})\xi-\sin\theta u$
\begin{align*}
  A_{11} &=\int_{ \Theta_1}\beta(\theta)(\xi'-\xi)\mu^{2}(u)\int_{{\eta_1}}\int_0^1\partial_\xi M_{\delta}(t,\eta,\xi_\tau)d\tau\\
&\qquad\qquad\times\langle \eta\rangle^r \widehat{g}(t,{\eta_1},\xi)\widehat{f}_{even}(t,\eta-{\eta_1},u)d{\eta_1}\overline{\langle \eta\rangle^r\widehat{h}(t,\eta,\xi') }d\theta du d\xi d\eta
\\&=\int_{ \Theta_1}\beta(\theta)\frac{1}{2}\sin^2(\frac{\theta}{2})\xi\mu^{2}(u)\int_{{\eta_1}}\int_0^1\partial_\xi M_{\delta}(t,\eta,\xi_\tau)d\tau\\
&\qquad\qquad\times\langle \eta\rangle^r \widehat{g}(t,{\eta_1},\xi)\widehat{f}_{even}(t,\eta-{\eta_1},u)d{\eta_1}\overline{\langle \eta\rangle^r\widehat{h}(t,\eta,\xi') }d\theta du d\xi d\eta\\
&\qquad\qquad+\int_{ \Theta_1}\beta(\theta)\sin\theta u\mu^{2}(u)\int_{{\eta_1}}\int_0^1\partial_\xi M_{\delta}(t,\eta,\xi_\tau)d\tau\\
&\qquad\qquad\times\langle \eta\rangle^r \widehat{g}(t,{\eta_1},\xi)\widehat{f}_{even}(t,\eta-{\eta_1},u)d{\eta_1}\overline{\langle \eta\rangle^r\widehat{h}(t,\eta,\xi') }d\theta du d\xi d\eta\\
&=B_1+B_2
\end{align*}
Using the Lemma \ref{M} and Lemma \ref{7num},
\begin{align*}
    |B_1|&\le C\int_{ \Theta_1}\Big|\beta(\theta)\frac{1}{2}\sin^2(\frac{\theta}{2})\xi\mu^{2}(u)\int_{{\eta_1}}\int_0^1\partial_\xi M_{\delta}(t,\eta,\xi_\tau)d\tau\\
&\qquad\qquad\times\langle \eta\rangle^r \widehat{g}(t,{\eta_1},\xi')\widehat{f}_{even}(t,\eta-{\eta_1},u')d{\eta_1}\overline{\langle \eta\rangle^r\widehat{h}(t,\eta,\xi) }\Big|d\theta du d\xi d\eta\\
&\le C\int_{ \Theta_1}\Big|\beta(\theta)\frac{1}{2}\sin^2(\frac{\theta}{2})\xi\mu^{2}(u) M_{\delta}(t,\eta_1,\xi')M_{\delta}(t,\eta-\eta_1, u')\\
&\qquad\qquad\times e^{t\langle u\rangle^{2\tilde{s}}}\langle \eta\rangle^r \widehat{g}(t,{\eta_1},\xi')\widehat{f}_{even}(t,\eta-{\eta_1},u')d{\eta_1}\overline{\langle \eta\rangle^r\widehat{h}(t,\eta,\xi) }\Big|d\theta du d\xi d\eta.
\end{align*}
Note that , 
\begin{align*}
    \int_0^{|\xi|^{-\frac 12}}\beta(\theta)\sin^2\frac{\theta}{2}d\theta\le C_0\int_0^{|\xi|^{-\frac 12}}\theta^{1-2s}d\theta\le C_0 \theta^{2-2s}\Big|^{|\xi|^{-\frac 12}}_{0}\le C_0\langle\xi\rangle^{s-1}.
\end{align*}
Thanks to H\"older inequality
\begin{align*}
|B_1|&\le C\Big\{\int_{ \Theta_1}\beta(\theta)\sin^{2}\frac{\theta}{2}|\xi|^{1-s}\Big|\int_{{\eta_1}}M_{\delta}(t,{\eta_1},\xi')M_{\delta}(t,\eta-{\eta_1},u')\\
&\qquad \times\langle \eta\rangle^r \widehat{g}(t,{\eta_1},\xi')\widehat{f}_{even}(t,\eta-{\eta_1},u')d{\eta_1}\Big|^{2}d\theta du d\xi d\eta\Big\}^{1/2}\\
&\qquad\times\Big\{\int_{ \Theta_1}\beta(\theta)\sin^{2}\frac{\theta}{2}|\xi|^{1+s}\Big|\mu^{2}(u)e^{t\langle u\rangle^{2\tilde{s}}}\langle \eta\rangle^r\widehat{h}(t,\eta,\xi)|^2  d\theta du d\xi d\eta\Big\}^{1/2}\\
&=B_{11}\times B_{12}.
\end{align*}
For the second item $B_{12}$
\begin{align*}
   B_{12}&\le C_0\Big\{\int_{\mathbb{R}^3}\langle\xi\rangle^{2s}\Big|\mu^{2}(u)e^{t\langle u\rangle^s}\langle \eta\rangle^r\widehat{h}(t,\eta,\xi)|^2 du d\xi d\eta\Big\}^{1/2}\\
   &\le C_0 \|h\|_{H^r_x(H^s_v)}\|\mu^{2}(u)e^{t\langle u\rangle^{2\tilde{s}}}\|_{L^2_u}\le C\|h\|_{H^r_x(H^s_v)}. 
\end{align*}
By variable change $(\xi',u')\to(\xi,u)$, for the first term $B_{11}$, using
\begin{align*} \left\|\int_0^{|\xi|^{-\frac 12}}\beta(\theta)\sin^{2}\frac{\theta}{2}|\xi'|^{1-s}d\theta\right\|_{L^{\infty}_{\xi,u}}\le C.
\end{align*}
We get
\begin{align*}
    B_{11}&\le C \Big\{\int_{\mathbb{R}^3}(\langle{\eta_1}\rangle^{2r}+\langle\eta-{\eta_1}\rangle^{2r})\Big|\int_{{\eta_1}}M_{\delta}(t,{\eta_1},\xi)M_{\delta}(t,\eta-{\eta_1},u)\\
&\qquad\qquad\times\langle \eta\rangle^r \widehat{g}(t,{\eta_1},\xi)\widehat{f}_{even}(t,\eta-{\eta_1},u)d{\eta_1}\Big|^{2} du  d\xi d\eta\Big\}^{1/2}\\
&\le C \Big\{\| \langle \eta\rangle^rM_{\delta}(t,{\eta},\xi)\widehat{g}(t,{\eta},\xi)\|^2_{L^2_\eta(L^2_\xi)}\| M_{\delta}(t,{\eta},\xi)\widehat{f}_{even}(t,{\eta},\xi)\|^2_{L^1_\eta(L^2_\xi)}\\
&\qquad\qquad+\| M_{\delta}(t,{\eta},\xi)\widehat{g}(t,{\eta},\xi)\|^2_{L^1_\eta(L^2_\xi)}\| \langle \eta\rangle^rM_{\delta}(t,{\eta},\xi)\widehat{f}_{even}(t,{\eta},\xi)\|^2_{L^2_\eta(L^2_\xi)}\Big\}^{1/2}\\
&\le C\Big\{\|M_{\delta}g\|^2_{H^r_x(H^s_v)}\|M_{\delta}f\|^2_{L^{\infty}_x(L^2_v)}+\|M_{\delta}g\|^2_{L^{\infty}_x(H^s_v)}\|M_{\delta}f\|^2_{H^r_x(L^2_v)}\Big\}^{1/2},
\end{align*}
Then note that continuous embedding $L^\infty(\RR)\subset H^r(\RR)$, 
\begin{align*}
    &|B_1|\le C\|M_{\delta}f\|_{H^r_x(L^2_v)}\|M_{\delta}g\|_{H^r_x(L^2_v)}\|h\|_{H^r_x(H^s_v)}.
\end{align*}

For $B_2$, by the same method, using the Lemma \ref{M} and Lemma \ref{7num},
\begin{align*}
    |B_2|&\le C\int_{ \Theta_1}|\beta(\theta)\frac{1}{2}\sin(\theta)u\mu^{2}(u)\int_{{\eta_1}}\int_0^1\partial_\xi M_{\delta}(t,\eta,\xi_\tau)d\tau\\
&\qquad\qquad\times\langle \eta\rangle^r \widehat{g}(t,{\eta_1},\xi')\widehat{f}_{even}(t,\eta-{\eta_1},u')d{\eta_1}\overline{\langle \eta\rangle^r\widehat{h}(t,\eta,\xi) }|d\theta du d\xi d\eta\\
&\le C\int_{ \Theta_1}|\beta(\theta)\frac{1}{2}\sin(\theta)u\mu^{2}(u)M_{\delta}(t,\eta_1,\xi')M_{\delta}(t,\eta-\eta_1,\xi')\\
&\qquad\times e^{t\langle u\rangle^{2\tilde{s}}}\langle \eta\rangle^r \widehat{g}(t,{\eta_1},\xi')\widehat{f}_{even}(t,\eta-{\eta_1},u')d{\eta_1}\overline{\langle \eta\rangle^r\widehat{h}(t,\eta,\xi) }|d\theta du d\xi d\eta
\end{align*}
Using, for $0<2s<1$, 
\begin{align*}   \int_0^{|\xi|^{-\frac 12}}\beta(\theta)\sin(\theta)d\theta\le C_0\int_0^{|\xi|^{-\frac 12}}\theta^{-2s}d\theta\le C_0 \theta^{1-2s}\Big|^{|\xi|^{-\frac 12}}_{0}\le C_0\langle\xi\rangle^{s-\frac 12}.
\end{align*}
We get
\begin{align*}
    &|B_2|\le C\|M_{\delta}f\|_{H^r_x(L^2_v)}\|M_{\delta}g\|_{H^r_x(L^2_v)}\|h\|_{H^r_x(L^2_v)}.
\end{align*}
Hence we have proved Lemma \ref{lemma5-3} for $0<s<\frac 12$, 
$$
|A_1|\le C\|M_{\delta}f\|_{H^r_x(L^2_v)}\|M_{\delta}g\|_{H^r_x(L^2_v)}\|h\|_{H^r_x(H^s_v)}.
$$
\end{proof}

\noindent
{\bf Strong singularity case of $\frac 12\le s<1$.}

The analysis in $\Theta_3,\,\,\Theta_4$ is same to the case when $0<s<1/2$. For $\Theta_1,\,\,\Theta_2$, take the second-order Taylor formula. Using
\[
\iint\beta(\theta)(\xi-\xi')F(\xi')d\theta d\xi=0,
\]
and
$$
(\xi'-\xi)^2=\frac{1}{4}\sin^4(\frac{\theta}{2})\xi^2+\sin^2\theta u^2+\sin^2(\frac{\theta}{2})\sin\theta\, \xi\, u,
$$
Thus, combined with Lemma \ref{M} and Lemma \ref{even}, we have
\begin{align*}
A_1&=\int_{\Theta_1}\int_{{\eta_1}}\beta(\theta)\big\{\ M_{\delta}(t, \eta, \xi)-M_{\delta}(t, \eta, \xi')\big\}\mu^{2}(u)\\
&\ \ \times\langle \eta\rangle^r \widehat{g}(t,{\eta_1},\xi')\widehat{f}_{even}(t,\eta-{\eta_1},u')d{\eta_1}\overline{\langle \eta\rangle^r\widehat{h}(t,\eta,\xi) }d\theta du  d\xi d\eta\\
&=\int_{\Theta_1}\int_{{\eta_1}}\beta(\theta)\int_0^1(\xi-\xi')^2(1-\tau)\partial^2_\xi M_{\delta}(t,\eta,\xi_\tau)d\tau \mu^{2}(u)\\
&\ \ \times\langle \eta\rangle^r \widehat{g}(t,{\eta_1},\xi')\widehat{f}_{even}(t,\eta-{\eta_1},u')d{\eta_1}\overline{\langle \eta\rangle^r\widehat{h}(t,\eta,\xi) }d\theta du  d\xi d\eta\\
&=B_3+B_4+B_5
\end{align*}
where
\begin{align*}
    B_3&=\int_{\Theta_1}\int_{{\eta_1}}\beta(\theta)\int_0^1\sin^4(\frac{\theta}{2})\xi^2(1-\tau)\partial^2_\xi M_{\delta}(t,\eta,\xi_\tau)d\tau\mu^{2}(u)\\
&\qquad\qquad\times\langle \eta\rangle^r \widehat{g}(t,{\eta_1},\xi')\widehat{f}_{even}(t,\eta-{\eta_1},u')d{\eta_1}\overline{\langle \eta\rangle^r\widehat{h}(t,\eta,\xi) }d\theta d\xi du  d\eta,
\end{align*}
\begin{align*}
B_4&=\int_{\Theta_1}\int_{{\eta_1}}\beta(\theta)\int_0^1\sin^2\theta\, u^2(1-\tau)\partial^2_\xi M_{\delta}(t,\eta,\xi_\tau)d\tau\mu^{2}(u)\\
&\qquad\qquad\times\langle \eta\rangle^r \widehat{g}(t,{\eta_1},\xi')\widehat{f}_{even}(t,\eta-{\eta_1},u')d{\eta_1}\overline{\langle \eta\rangle^r\widehat{h}(t,\eta,\xi) }d\theta  du d\xi d\eta,
\end{align*}
\begin{align*}
    B_5&=\int_{\Theta_1}\int_{{\eta_1}}\beta(\theta)\int_0^1\sin^2(\frac{\theta}{2})\sin\theta\xi u(1-\tau)\partial^2_\xi M_{\delta}(t,\eta,\xi_\tau)d\tau\mu^{2}(u)\\
&\qquad\qquad\times\langle \eta\rangle^r \widehat{g}(t,{\eta_1},\xi')\widehat{f}_{even}(t,\eta-{\eta_1},u')d{\eta_1}\overline{\langle \eta\rangle^r\widehat{h}(t,\eta,\xi) }d\theta du  d\xi d\eta.
\end{align*}
We can analysis $B_3,B_4,B_5$ by same method, for instance we estimate a term of $B_3$.

Note that when $(\xi, u, \theta)\in\Theta_1$, $
|\xi|\sin^2(\frac{\theta}{2})\le 1$ and
\begin{align*}
    \int_0^{|\xi|^{-\frac 12}}\beta(\theta)\sin^2\frac{\theta}{2}d\theta\le C_0\int_0^{|\xi|^{-\frac 12}}\theta^{1-2s}d\theta\le C_0 \theta^{2-2s}\Big|^{|\xi|^{-\frac 12}}_{0}\le C_0\langle\xi\rangle^{s-1}.
\end{align*}
We can get
\begin{align*}
    &|B_3|\le C\|M_{\delta}g\|_{H^r_x(L^2_v)}\|M_{\delta}f\|_{H^r_x(L^2_v)}\|h\|_{H^r_x(H^{s}_v)}.
\end{align*}
By the same method, 
\begin{align*}
  |B_{4}|
&\le C\Big\{\int_{\Theta_1}\beta(\theta)\sin^{2}{\theta}\ \Big|\int_{{\eta_1}}M_{\delta}(t,{\eta_1},\xi')M_{\delta}(t,\eta-{\eta_1},u')\\
&\qquad \times\langle \eta\rangle^r \widehat{g}(t,{\eta_1},\xi')\widehat{f}_{even}(t,\eta-{\eta_1},u')d{\eta_1}\Big|^{2}d\theta du  d\xi d\eta\Big\}^{1/2}\\
&\qquad\times\Big\{\int_{\Theta_1}\beta(\theta)\sin^{2}{\theta}u^4\Big|\mu^{2}(u)e^{t\langle u\rangle^{2\tilde{s}}}\langle \eta\rangle^r\widehat{h}(t,\eta,\xi)\Big|^2  d\theta du  d\xi d\eta\Big\}^{1/2},
\end{align*}
using
\begin{align*}
    \int_0^{|\xi|^{-\frac 12}}\beta(\theta)\sin^{2}{\theta}d\theta\le C\langle\xi\rangle^{s-1}\le C.
\end{align*}
Which imply
$$
|B_4|\le C\|M_{\delta}g\|_{H^r_x(L^2_v)}\|M_{\delta}f\|_{H^r_x(L^2_v)}\|h\|_{H^r_x(L^{2}_v)}.
$$
And for $B_5$
\begin{align*}
  |B_{5}|
&\le C\Big\{\int_{\Theta_1}\beta(\theta)\sin^{2}{\theta}|\xi|^{1-s}\Big|\int_{{\eta_1}}M_{\delta}(t,{\eta_1},\xi')M_{\delta}(t,\eta-{\eta_1},u')\\
&\qquad \times\langle \eta\rangle^r \widehat{g}(t,{\eta_1},\xi')\widehat{f}_{even}(t,\eta-{\eta_1},u')d{\eta_1}\Big|^{2}d\theta du  d\xi d\eta\Big\}^{1/2}\\
&\times\Big\{\int_{\Theta_1}\beta(\theta)\sin^{4}\frac{\theta}{2}|\xi|^{1+s}u^2\Big|\mu^{2}(u)e^{t\langle u\rangle^{2\tilde{s}}}\langle \eta\rangle^r\widehat{h}(t,\eta,\xi)|^2  d\theta du  d\xi d\eta\Big\}^{1/2}.
\end{align*}
Now using 
\begin{align*}
\left\|\int_0^{|\xi|^{-\frac 12}}\beta(\theta)\sin^{2}{\theta}|\xi'|^{1-s}d\theta\right\|_{L^{\infty}_{\xi,u}}&= \left\|\int_0^{|\xi|^{-\frac 12}}\beta(\theta)\sin^{2}{\theta}|\xi|^{1-s}d\theta\right\|_{L^{\infty}_{\xi,u}}\\
   & \le C_0 |||\xi|^{1-s}\langle\xi\rangle^{s-1}\|_{L^{\infty}_{\xi,u}}\le C_0,
\end{align*}
and
\begin{align*}
\left\|\int_0^{|\xi|^{-\frac 12}}\beta(\theta)\sin^{4}{\frac{\theta}{2}}|\xi|^{1+s}d\theta\right\|_{L^{\infty}_{\xi,u}} \le C_0 |||\xi|^{1+s}\langle\xi\rangle^{s-2}\|_{L^{\infty}_{\xi,u}}\le C_0.
\end{align*}
Thus
\begin{align*}
    &|B_5|\le C\|M_{\delta}f\|_{H^r_x(L^2_v)}\|M_{\delta}g\|_{H^r_x(L^2_v)}\|h\|_{H^r_x(L^2_v)}.
\end{align*}
Hence we have proved Lemma \ref{lemma5-3} for $\frac 12\le s<1$.

\section{Technical Lemma}

The estimate of term $A_2$ is more technical, we have firstly.
\begin{corollary}\label{Lemma-A2}
    For $A_2$, note that $\mu$ is a even function for $v$, we also have
    \begin{align*}
A_2&=\int_{\mathbb{R}^4}\int_{-\frac{\pi}{2}}^{\frac{\pi}{2}} \beta(\theta)\ \langle \eta\rangle^{2r} \widehat{f}_{even}(t,{\eta_1}, u')\widehat{g}(t,\eta-{\eta_1}, \xi')M_{\delta}(t, \eta, \xi')\\ 
&\qquad\quad\times \big( \mu^{2}(u)-\mu^{2}(u')\big)
\overline{\widehat{h}(t,\eta,\xi)}\,d\theta d{\eta_1} du  d\xi d\eta\, ,
\end{align*}
\end{corollary}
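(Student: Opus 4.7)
The plan is to follow the same symmetrization strategy already used to establish Lemma \ref{even}, exploiting the evenness of the Gaussian $\mu$ together with the symmetry of the collision kernel $\beta$ in $\theta$. The key ingredient is the orthogonal change of variables $(u,\theta)\to(-u,-\theta)$, whose Jacobian is $1$ and which preserves the integration domain $\mathbb{R}_u\times[-\pi/2,\pi/2]_\theta$.

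First I would record how each factor in the integrand of $A_2$ transforms. From $u'=\xi\sin\theta+u\cos\theta$ and $\xi'=\xi\cos\theta-u\sin\theta$, a direct computation gives
\begin{equation*}
\xi'\longmapsto \xi\cos(-\theta)-(-u)\sin(-\theta)=\xi',\qquad u'\longmapsto \xi\sin(-\theta)+(-u)\cos(-\theta)=-u'.
\end{equation*}
Thus $\xi'$ is invariant and $u'$ is mapped to $-u'$. Since $\mu$ is even, both $\mu^2(u)$ and $\mu^2(u')$ are preserved, so the difference $\mu^2(u)-\mu^2(u')$ is unaffected; since $\beta$ in \eqref{1.1+1} is an even function of $\theta$, $\beta(\theta)$ is preserved too. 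The remaining factors $\widehat{g}(t,\eta-\eta_1,\xi')$, $M_\delta(t,\eta,\xi')$, $\langle\eta\rangle^{2r}$, and $\overline{\widehat{h}(t,\eta,\xi)}$ depend only on the variables $\eta,\eta_1,\xi,\xi'$, hence are also invariant. The only factor that picks up a sign change is $\widehat{f}(t,\eta_1,u')$, which becomes $\widehat{f}(t,\eta_1,-u')$.

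Having established these invariances, I would average the original expression for $A_2$ with the one obtained after the change of variables. The symmetric combination produces
\begin{equation*}
\tfrac{1}{2}\bigl(\widehat{f}(t,\eta_1,u')+\widehat{f}(t,\eta_1,-u')\bigr)=\widehat{f}_{even}(t,\eta_1,u'),
\end{equation*}
which yields exactly the representation claimed in Corollary \ref{Lemma-A2}. I do not anticipate any real technical obstacle: the proof is a direct parity cancellation whose validity rests on three independent facts, namely the evenness of $\mu$, the evenness of $\beta$, and the invariance of $\xi'$ under $(u,\theta)\to(-u,-\theta)$. The only point requiring mild care is ensuring the double integration domain in $(u,\theta)$ is symmetric, which it is by construction.
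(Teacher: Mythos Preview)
Your argument is correct and coincides with the paper's own approach: the paper states Corollary~\ref{Lemma-A2} without a separate proof, relying on the same change of variables $(\xi,u,\theta)\to(\xi,-u,-\theta)$ already invoked for Lemma~\ref{even}, together with the evenness of $\mu$ and $\beta$. Your verification that $\xi'$ is invariant while $u'\mapsto -u'$ under this substitution is exactly the mechanism the paper has in mind.
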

We can prove now,
\begin{lemma}
We have the following estimates
 \begin{equation*}
    |A_2|\le C\|M_{\delta}f\|_{H^r_x(L^2_v)}|||M_{\delta}g|||_{(r,0)}|||h|||_{(r,0)}
\end{equation*}
\end{lemma}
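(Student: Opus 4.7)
My plan parallels the treatment of $A_1$ in Lemma~\ref{lemma5-3}, with the Maxwellian difference $\mu^2(u)-\mu^2(u')$ now playing the role that the multiplier difference $M_\delta(\eta,\xi)-M_\delta(\eta,\xi')$ played there. First, by Corollary~\ref{Lemma-A2}, I replace $\widehat{f}$ by $\widehat{f}_{\mathrm{even}}$. Next, after the harmless relabelling $\eta_1\leftrightarrow\eta-\eta_1$ in the $d\eta_1$ integration, I apply Lemma~\ref{7num} at $\tau=0$ to factorise
\[
M_\delta(t,\eta,\xi')\le C\,M_\delta(t,\eta-\eta_1,\xi')\max\bigl\{M_\delta(t,\eta_1,u'),M_\delta(t,\eta_1,-u')\bigr\}\,e^{\langle u\rangle^{2\tilde s}},
\]
and use $\langle\eta\rangle^r\le C(\langle\eta_1\rangle^r+\langle\eta-\eta_1\rangle^r)$ to split the Sobolev weight. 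The resulting building blocks $\langle\eta_1\rangle^r M_\delta\widehat f_{\mathrm{even}}(\eta_1,\pm u')$ and $\langle\eta-\eta_1\rangle^r M_\delta\widehat g(\eta-\eta_1,\xi')$ (or the symmetric exchange) feed directly into $\|M_\delta f\|_{H^r_x(L^2_v)}$, via the embedding $H^r_x\hookrightarrow L^\infty_x$ valid for $r>1/2$, and into $|||M_\delta g|||_{(r,0)}$.

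Second, I use the mean-value estimate $|\mu^2(u)-\mu^2(u')|\le C|u-u'|\bigl(\mu(u)+\mu(u')\bigr)$, together with the geometric bound $|u-u'|\le|\xi||\sin\theta|+2|u|\sin^2(\theta/2)$, and absorb the Gaussian tails (using $2\tilde s\le 1$) into the $\widehat f_{\mathrm{even}}$-integration. I then split the phase space as $|\theta|\lessgtr|\xi|^{-1/2}$ in the same way as for $A_1$. In the mild-singularity case $0<s<1/2$ I apply Cauchy--Schwarz distributing $\sqrt{\beta(\theta)}\,|u-u'|$ between the $M_\delta\widehat g$ factor and the $\widehat h$ factor; the angular integral $\int_0^{|\xi|^{-1/2}}\beta(\theta)\theta^2\,d\theta\sim\langle\xi\rangle^{s-1}$ together with the polynomial part $|\xi|+|u|$ yields Sobolev-type weights that convert into $\|\langle v\rangle^s M_\delta g\|_{H^r_x(L^2_v)}$ on one side and $\|h\|_{H^r_x(H^s_v)}$ on the other, both of which are controlled by the anisotropic norms through \eqref{2.2.1xc}. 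In the strong-singularity case $1/2\le s<1$ I instead use a second-order Taylor expansion of $\mu^2$ about $u$ together with the Bobylev-style cancellation $\int\beta(\theta)(u-u')F(\cdot)\,d\theta=0$ to remove the odd-in-$\theta$ principal term, exactly as in the strong-singularity treatment of $A_1$.

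The main obstacle I anticipate lies in the Cauchy--Schwarz distribution of the factor $|u-u'|$ in the strong-singularity range: the quadratic remainder $(u-u')^2\lesssim\theta^4|u|^2+\theta^2|\xi|^2$ must be split so that, on one side, the resulting angular weight is integrable against $\beta(\theta)\sim\theta^{-1-2s}$, and on the other the polynomial weight in $(\xi,u)$ yields exactly the norms $|||M_\delta g|||_{(r,0)}$ and $|||h|||_{(r,0)}$ with no slack. As in the analysis of $A_1$, this forces a decomposition into three $B_3,B_4,B_5$-type subterms, and a careful use of the constraint $|\xi|\theta^2\le1$ valid on the small-angle region to convert the excess powers of $|\xi|$ into the admissible weight $\langle\xi\rangle^s$; only after this conversion does the estimate close with the exponent $s$ matching the Sobolev order appearing in \eqref{2.2.1xc}.
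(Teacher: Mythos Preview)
Your overall architecture---Corollary~\ref{Lemma-A2}, factorisation of $M_\delta(\xi')$ via Lemma~\ref{7num}, Taylor expansion of $\mu^2$, and Cauchy--Schwarz after an angular cutoff---is the right one, and matches the paper's scheme. But the specific threshold you carry over from $A_1$ is wrong and the argument does not close.

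The structural asymmetry between $A_1$ and $A_2$ is that
\[
\xi-\xi'=\tan\tfrac{\theta}{2}\,(u+u'),\qquad\text{while}\qquad u-u'=u(1-\cos\theta)-\xi\sin\theta .
\]
In $A_1$ the dangerous factor $u+u'$ is absorbed by the Gaussians $\mu^2(u),\mu^2(u')$, so the residual angular weight is $\sin(\theta/2)$ and the cutoff $|\theta|\le|\xi|^{-1/2}$ (giving $|\xi|\theta^2\le1$) is natural. In $A_2$ the dangerous factor is $\xi\sin\theta$, which is \emph{linear} in $|\xi|$ and carries no Gaussian compensation. With your cutoff, on the small-angle region one has $|\xi\sin\theta|\lesssim|\xi|^{1/2}$, unbounded, and for $0<s<1/2$
\[
\int_0^{|\xi|^{-1/2}}\beta(\theta)\,|\xi\sin\theta|\,d\theta\;\sim\;|\xi|^{s+\frac12},
\]
which exceeds the $\langle\xi\rangle^{2s}$ that the two factors $|||M_\delta g|||_{(r,0)}$ and $|||h|||_{(r,0)}$ can absorb (no asymmetric Cauchy--Schwarz split helps: one needs $1-s\le p\le s$, impossible for $s<1/2$). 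The same obstruction prevents you from relating $\mu(u_\tau)$ or $\mu(u')$ to $\mu(u)$ in order to kill $e^{\langle u\rangle^{2\tilde s}}$, since $|u_\tau-u|$ is not bounded on your region. The paper therefore takes the threshold $|\theta|\le|\xi|^{-1}$ for $A_2$, so that $|\xi\sin\theta|\le1$ on the small-angle region; then $\int_0^{|\xi|^{-1}}\beta(\theta)\sin\theta\,d\theta\sim\langle\xi\rangle^{2s-1}$ and $\int_{|\xi|^{-1}}^{\pi/2}\beta(\theta)\,d\theta\sim\langle\xi\rangle^{2s}$ balance exactly. The paper also refines the decomposition to six regions $\Sigma_1,\dots,\Sigma_6$ by additionally distinguishing $\mu(u)\lessgtr\mu(u')$, which tells you whether the available Gaussian sits at $u$ or at $u'$; your mean-value bound $|\mu^2(u)-\mu^2(u')|\le C|u-u'|(\mu(u)+\mu(u'))$ produces both terms, and they are not interchangeable unless $|u-u'|\lesssim1$.
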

\begin{proof}
For $A_2$, we can analysis it by the same way like $A_1$. 
But we divide the space $\mathbb{R}^3_{\eta,\xi, u}\times [-\frac{\pi}{2},\ \frac{\pi}{2}]$ into 6 parts,
\begin{align*}
\Sigma_1&=\{|\theta|\le |\xi|^{-1},\mu(u)\le\mu(u') \},\ \ \ \ \ 
    \Sigma_2=\{|\theta|\ge  |\xi|^{-1},\mu(u)\le\mu(u') \},\\
   \Sigma_3&=\{|\theta|\ge  |\xi|^{-1}, M_{\delta}(t,\eta-\eta_1,u')\ge M_{\delta}(t,\eta-\eta_1,-u')\,,\mu(u)\ge\mu(u')\},\\
       \Sigma_4&=\{|\theta|\ge  |\xi|^{-1},M_{\delta}(t,\eta-\eta_1,u')\le M_{\delta}(t,\eta-\eta_1,-u')\,,\mu(u)\ge\mu(u')\},
    \\
        \Sigma_5&=\{|\theta|\le  |\xi|^{-1},M_{\delta}(t,\eta-\eta_1,u')\ge M_{\delta}(t,\eta-\eta_1,-u')\,,\mu(u)\ge\mu(u')\},\\
        \Sigma_6&=\{|\theta|\le  |\xi|^{-1},M_{\delta}(t,\eta-\eta_1,u')\le M_{\delta}(t,\eta-\eta_1,-u')\,,\mu(u)\ge\mu(u')\}.
\end{align*}
Using Corollary \ref{Lemma-A2}, we have
\begin{align*}
A_2&=\sum^{6}_{j=1}\int_{\Sigma_j}\int_{{\eta_1}}\beta(\theta)M_{\delta}(t, \eta, \xi')\Big( \mu^{2}(u)-\mu^{2}(u')\Big)\\
&\qquad\times\langle \eta\rangle^r \widehat{g}(t,{\eta_1},\xi')\widehat{f}_{even}(t,\eta-{\eta_1},u')d{\eta_1}\overline{\langle \eta\rangle^r\widehat{h}(t,\eta,\xi) }d\theta du d\xi d\eta\\
&=A_{21}+\cdots A_{26}.
\end{align*}
Firstly
\begin{align*}
|A_{22}|&\le\int_{\Sigma_2}\beta(\theta)\int_{{\eta_1}}M_{\delta}(t, \eta, \xi')\mu^{2}(u')\\
&\qquad\times\langle \eta\rangle^r |\widehat{g}(t,{\eta_1},\xi')\widehat{f}_{even}(t,\eta-{\eta_1},u')d{\eta_1}\overline{\langle \eta\rangle^r\widehat{h}(t,\eta,\xi) }|d\theta d\xi d\eta.
\end{align*}
Note that 
\begin{align*}
    &\sqrt{1+(\rho\eta+\xi')^2}=\sqrt{1+(\rho{\eta_1}+\rho(\eta-{\eta_1})+\xi+u'-u')^2}\\&\leq\sqrt{1+(\xi'+\rho{\eta_1})^2}+\sqrt{1+(u'+\rho(\eta-{\eta_1}))^2}+\sqrt{|u'|}.
\end{align*}
It stands 
\begin{align*}
    \frac{M_{\delta}(t,\eta,\xi')}{M_{\delta}(t,\eta_1,\xi')M_{\delta}(t,\eta-\eta_1,u')}\le 3e^{|u'|}.
\end{align*}
We have also
\begin{align*}
    \int_{|\theta|\ge  |\xi|^{-1}}\beta(\theta)d\theta\le C \theta^{-2s}\Big|^{\pi/2}_{ |\xi|^{-1}}\le C\langle\xi\rangle^{{2s}}.
\end{align*}
We can then get
\begin{equation*}
    |A_{22}|\le C\|M_{\delta}f\|_{H^r_x(L^2_v)}\|M_{\delta}g\|_{H^r_x(H^s_v)}\|h\|_{H^r_x(H^s_v)}.
\end{equation*}
The estimate of the term $A_{23}, A_{24}$, the integrale define over $\Sigma_3$ and $\Sigma_4$, are exactly the same 
\begin{equation*}
    |A_{23}|+|A_{24}|\le C\|M_{\delta}f\|_{H^r_x(L^2_v)}\|M_{\delta}g\|_{H^r_x(H^s_v)}\|h\|_{H^r_x(H^s_v)}.
\end{equation*}
The estimate of the term $A_{21}, A_{25}, A_{26}$, the integrale define over $\Sigma_1, \Sigma_5$ and $\Sigma_6$, are also by the same way, so we study only the term $A_{21}$. 

When $0<s<1/2$, using the first-order Taylor formula 
\[
\mu^2(u)-\mu^2(u')=\int_{0}^{1}(\mu^2)'(u_\tau)\,(u-u')\,d\tau.
\]
where $u_\tau=u-\tau(u-u')=u-(1-\tau)(u-u')$ and $u'-u=\frac{1}{2}\sin^2(\frac{\theta}{2})u+\sin\theta \xi$.
\begin{align*}
A_{21}&=\int_{\Sigma_1}\int_{{\eta_1}}\beta(\theta)M_{\delta}(t, \eta, \xi')\int_{0}^{1}(\mu^2)'(u_\tau )\,(u-u')d\tau \\
&\qquad\qquad\times\langle \eta\rangle^r \widehat{g}(t,{\eta_1},\xi')\widehat{f}_{even}(t,\eta-{\eta_1},u')\overline{\langle \eta\rangle^r\widehat{h}(t,\eta,\xi) }d{\eta_1}d\theta du d\xi d\eta.
\end{align*}
Then
\begin{align*}
A_{21}
&=\int_{\Sigma_1}\int_{{\eta_1}}\beta(\theta)\frac{1}{2}\sin^2(\frac{\theta}{2})\,u\,M_{\delta}(t, \eta, \xi')\int_{0}^{1}(\mu^2)'(u_\tau)d\tau\\
&\qquad\qquad\times\langle \eta\rangle^r \widehat{g}(t,{\eta_1},\xi)\widehat{f}_{even}(t,\eta-{\eta_1},u)\overline{\langle \eta\rangle^r\widehat{h}(t,\eta,\xi') }d{\eta_1} d\theta du d\xi d\eta\\
&\qquad\qquad+\int_{\Sigma_1}\int_{{\eta_1}}\beta(\theta)\sin\theta\, \xi \, M_{\delta}(t, \eta, \xi')\int_{0}^{1}(\mu^2)'(u_\tau)d\tau\\
&\qquad\qquad\times\langle \eta\rangle^r \widehat{g}(t,{\eta_1},\xi)\widehat{f}_{even}(t,\eta-{\eta_1},u)d{\eta_1}\overline{\langle \eta\rangle^r\widehat{h}(t,\eta,\xi') }d\theta du d\xi d\eta\\
&=D_1+D_2
\end{align*}
Since for all $ (\xi,u,\eta,\theta)\in \Sigma_1,\,\, |(\sin\theta)\xi|\le 1$,
 $|\frac{1}{2}u|-1\le|u_{\tau}|\le |u+1|$, $|\frac{1}{2}u-1|\le|u'|\le |u|+1$ and $(\mu^2)'(u)=-2u\mu^2(u)$, we get
\begin{align*}
    |D_1|&\le C\int_{\Sigma_1}\int_{{\eta_1}}\Big|\beta(\theta)\frac{1}{2}\sin^2(\frac{\theta}{2})u M_{\delta}(t,\eta,\xi')(|u|+1)\mu^{2}(|\frac{1}{2}u-1|)\\
&\qquad\qquad\times\int_{{\eta_1}}\langle \eta\rangle^r \widehat{g}(t,{\eta_1},\xi')\widehat{f}_{even}(t,\eta-{\eta_1},u')d{\eta_1}\overline{\langle \eta\rangle^r\widehat{h}(t,\eta,\xi) }\Big|d\theta d\xi d\eta.
\end{align*}
Note that when $(\xi,u, \eta, \theta)\in\Sigma_1$, 
\begin{align*}
    \frac{M_{\delta}(t,\eta,\xi')}{M_{\delta}(t,\eta_1,\xi')M_{\delta}(t,\eta-\eta_1,u')}\le 3e^{|u|+1},
\end{align*}
and
\begin{align*}
    \int_0^{|\xi|^{-1}}\beta(\theta)\sin^2\frac{\theta}{2}d\theta\le C_0\langle\xi\rangle^{2s-2}
\end{align*}
Thanks to H\"older inequality, we get
$$ 
|D_1|\le C\|M_{\delta}f\|_{H^r_x(L^2_v)}\|M_{\delta}g\|_{H^r_x(H^s_v)}\|h\|_{H^r_x(H^s_v)}.
$$
For $D_2$, we have
\begin{align*}
    |D_2|&\le C\Big\{\int_{\Sigma_1}\int_{{\eta_1}}\beta(\theta)\sin(\theta)|\xi'|\Big|M_{\delta}(t,{\eta_1},\xi')M_{\delta}(t,\eta-{\eta_1},u')\\
&\qquad \times\langle \eta\rangle^r \widehat{g}(t,{\eta_1},\xi')\widehat{f}_{even}(t,\eta-{\eta_1},u')d{\eta_1}\Big|^{2}d\theta du d\xi d\eta\Big\}^{1/2}\\
&\qquad\times\Big\{\int_{\Sigma_1}\int_{{\eta_1}}\beta(\theta)\sin(\theta)|\xi|^2|\xi'|^{-1}\Big|(|u|+1)\mu^{2}(|\frac{1}{2}u-1|)\\
&\qquad\qquad\qquad\qquad \times e^{|u|+1}\langle \eta\rangle^r\widehat{h}(t,\eta,\xi)\Big|^2  d\theta d\xi d\eta\Big\}^{1/2}.
\end{align*}
By the same method, we use, for $0<s<\frac 12$,
\begin{align*}
    \int_0^{|\xi|^{-1}}\beta(\theta)\sin(\theta)d\theta\le C_0\langle\xi\rangle^{2s-1}
\end{align*}
and
\begin{align*}
    |\xi|^2/|\xi'|\le \langle\xi\rangle\langle u\rangle,\qquad\forall\ (\xi,u,\eta,\theta)\in\Sigma_1.
\end{align*}
We get
\begin{align*}
    &|D_2|\le C\|M_{\delta}f\|_{H^r_x(L^2_v)}\|M_{\delta}g\|_{H^r_x(H^s_v)}\|h\|_{H^r_x(H^s_v)}.
\end{align*}
Hence, we have for $0<s<1/2$, 
\begin{equation*}
|A_{21}|+|A_{25}|+|A_{26}|\le C\|M_{\delta}f\|_{H^r_x(L^2_v)}\|M_{\delta}g\|_{H^r_x(H^s_v)}\|h\|_{H^r_x(H^s_v)}.
\end{equation*}
For the case of $1/2\le s<1$, we use second order Tylor fromula,
\[
\mu^2(u)-\mu^2(u')=-2u'\mu^2(u')(u-u')
+\int_{0}^{1}(1-\tau)\,(\mu^2){"}(u_\tau)\,(u-u')^{2}\,d\tau.
\]
And when $(\xi,u, \eta,\theta)\in\Sigma_1$, 
$$
(u'-u)^2=\frac{1}{4}\sin^4(\frac{\theta}{2})u^2+\sin\theta^2 \xi^2-\sin^2(\frac{\theta}{2})\sin\theta \, \xi\, u,
$$
and
\begin{align*}
    \int_0^{|\xi|^{-1}}\beta(\theta)\sin^2\frac{\theta}{2}d\theta\le C_0\langle\xi\rangle^{2s-2}.
\end{align*}
It can therefore be concluded that
\begin{align*}
|A_2| \leq  C||M_{\delta}f||_{H^r_x(L^2_v)}|||M_{\delta}g|||_{(r,0)}|||h|||_{(r,0)}.
\end{align*}
We finish then the proof of  Proposition \ref{proposition2.3}.
\end{proof}

\vskip0.5cm
\section{Exponential decay for the velocity variable}

To study the exponential decay for the velocity variable of the solution, we choose test function to the equation in \eqref{eq-1}, then we will choose
$$
G^2_{\delta} g(t)\in L^\infty(]0, T[; \mathcal{G}^{1+\frac{1}{2s}}(\mathbb{R}_x; S^{1+\frac{1}{2s}}_{1+\frac{1}{2s}}(\mathbb{R}_v))),
$$
where
$$
G_\delta(t, v)=\frac{e^{c_0t \langle v\rangle^{2\tilde{s}}}}{1+\delta e^{c_0t \langle v\rangle^{2\tilde{s}}}},
$$
is a bounded function for $0<\delta< 1$, then we have
\begin{equation}\label{2.6ab}
    \begin{aligned}
        \Big((\partial_t +v\partial_x)g, &G_{\delta}^2g \Big)_{H^r_x(L^2_v)}+\Big(\mathcal{L}g, G_{\delta}^2g \Big)_{H^r_x(L^2_v)}\\
        &\qquad \qquad =\Big(\mathcal{K}(g, g),  G_{\delta}^2g \Big)_{H^r_x(L^2_v)}.
    \end{aligned}
\end{equation}
Our object is also, by using the above equation, to establish the following estimate
\begin{equation}\label{2.7ab}
    \|G_{\delta} g(t) \|_{H^r_x(L^2_v)}\le C,\ \ \ \ \ 
    0\le t\le T,
\end{equation}
with $C$ independs of $0<\delta<1$, then take $\delta\,\to\, 0$ in \eqref{2.7ab} to get 
$$
\|e^{c_0t \langle\ \cdot\ \rangle^{2\tilde{s}}} g(t) \|_{H^r_x(L^2_v)}\le C,\ \ \ \ \ 
    0\le t\le T.
$$
Combine with the results of \eqref{2.1}, we get the Gevrey-Gelfand-Shilov smoothing effect of Theorem \ref{theo1}.

We study the three terms in \eqref{2.6ab} by the following  Proposition,
\begin{proposition} \label{proposition2.1ab}
For the kinetic part of  \eqref{2.6ab}, we have 
\begin{equation*}
    \Big((\partial_t +v\partial_x)g, G_{\delta}^2g \Big)_{H^r_x(L^2_v)}\ge \frac{1}{2}\frac{d}{dt}||G_{\delta}g||^2_{H^r_x(L^2_v)}-c_0||\langle v\rangle^s G_{\delta}g||^2_{H^r_x(L^2_v)}.  
\end{equation*}
For the linaer part, we have
\begin{equation*}
    \big(\mathcal{L}g, G_{\delta}^2g \big)_{H^r_x(L^2_v)}\ge c_1 ||| G_{\delta} g|||_{(r,0)}^2 -C_1||G_{\delta}g||^2_{H^r_x(L^2_v)}.  
\end{equation*} 
For the nonlinear part, 
\begin{equation*}
\begin{aligned}
    \left|\big(\mathcal{K}(g, g), G_{\delta}^2g \big)_{H^r_x(L^2_v)}\right|&\le C_2 
    \|g\|_{H^r_x(L^2_v)}||| G_{\delta} g|||_{(r,0)}^2 \\
    &\quad +C_3||G_{\delta}g||^2_{H^r_x(L^2_v)}\ ||| G_{\delta} g|||_{(r,0)}. 
\end{aligned}
\end{equation*}
\end{proposition}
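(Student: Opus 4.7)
The proof of Proposition \ref{proposition2.1ab} will follow the same three-step template as Propositions \ref{proposition2.1}--\ref{proposition2.3}, the essential difference being that $G_\delta$ is now a pointwise multiplier in $v$ rather than a Fourier multiplier in $(D_x,D_v)$.

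For the kinetic inequality, I would use that $G_\delta(t,v)$ depends only on $t$ and $v$, so it commutes with $\langle D_x\rangle^r$, $\partial_x$ and the multiplication by $v$. Integration by parts in $x$ kills the transport term, $(v\partial_x g, G_\delta^2 g)_{H^r_x(L^2_v)}=0$. For the time derivative, a direct computation gives
\[
(\partial_t g, G_\delta^2 g)_{H^r_x(L^2_v)} = \tfrac{1}{2}\tfrac{d}{dt}\|G_\delta g\|_{H^r_x(L^2_v)}^2 - \iint (\partial_t G_\delta)\,G_\delta\,|\langle D_x\rangle^r g|^2\,dxdv,
\]
and the pointwise bound $\partial_t G_\delta = c_0\langle v\rangle^{2\tilde{s}}G_\delta/(1+\delta e^{c_0 t\langle v\rangle^{2\tilde{s}}}) \le c_0\langle v\rangle^{2\tilde{s}}G_\delta$ together with $\tilde{s}\le s$ controls the second term by $c_0\|\langle v\rangle^s G_\delta g\|^2_{H^r_x(L^2_v)}$.

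For the linear part, I would split
$(\mathcal{L}g, G_\delta^2 g)_{H^r_x(L^2_v)} = (\mathcal{L}(G_\delta g), G_\delta g)_{H^r_x(L^2_v)} + ([\mathcal{L}, G_\delta]g, G_\delta g)_{H^r_x(L^2_v)}$
to isolate the coercive contribution (via \eqref{2.2.1xb}) and a commutator. The commutator $[\mathcal{L}_1,G_\delta]+[\mathcal{L}_2,G_\delta]$ can be handled exactly as in Lemmas \ref{2.4} and \ref{2.4.1}, with two key algebraic inputs: the pointwise estimates $|\partial_v G_\delta|,|\partial_v^2 G_\delta|\le C G_\delta$ (valid because $2\tilde{s}\le 1$ keeps $\partial_v^k(c_0 t\langle v\rangle^{2\tilde{s}})$ bounded for $k=1,2$), and Lemma \ref{bd}, which allows $G_\delta(v)$ to be compared with $G_\delta(v')$ modulo a factor $e^{c_0 t\langle v_\ast\rangle^{2\tilde{s}}}$ absorbed by the Gaussian $\mu^{1/2}(v_\ast)$. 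A first- or second-order Taylor expansion of $G_\delta(v)-G_\delta(v')$ (depending on whether $0<s<\tfrac{1}{2}$ or $\tfrac{1}{2}\le s<1$), combined with the angular splitting $|\theta|\lessgtr |v|^{-1/2}$, then reduces the estimate to the same angular integrability computations as in Section \ref{section3}.

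For the nonlinear part, the central algebraic identity I would use is
$G_\delta(v)\,\mathcal{K}(g,g)(v) = \mathcal{K}(g, G_\delta g)(v) + \mathcal{R}g(v)$, with
\[
\mathcal{R}g(v) := \iint \beta(\theta)\,\mu^{1/2}(v_\ast)\,\bigl(G_\delta(v)-G_\delta(v')\bigr)\,g'_\ast\,g'\,d\theta\,dv_\ast.
\]
Pairing the first summand with $G_\delta g$ and applying Proposition \ref{huak} gives directly the $C_2\|g\|_{H^r_x(L^2_v)}|||G_\delta g|||_{(r,0)}^2$ term. The remainder $\mathcal{R}g$ is the main technical obstacle: one must Taylor-expand $G_\delta(v)-G_\delta(v')$ to compensate the angular singularity of $\beta(\theta)$, then transfer the weight from $v_\tau$ to $v'$ (and, after the change of variables $(v',v_\ast')\mapsto(v,v_\ast)$, to $v$) via $|\partial_v G_\delta|\le CG_\delta$ and Lemma \ref{bd}; the exponential overhead is again absorbed by $\mu^{1/2}(v_\ast)$. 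A Cauchy-Schwarz pairs one factor with a $\mu^{1/4}$-type weight, controlled by $|||G_\delta g|||_{(r,0)}$ through Corollary \ref{huagamma} and \eqref{2.2.1xc}, while the remaining two factors give $\|G_\delta g\|_{H^r_x(L^2_v)}^2$, producing the $C_3$-term. The hard part is precisely this balance between the pointwise difference $G_\delta(v)-G_\delta(v')$ and the collisional singularity, and the constraint $2\tilde{s}\le 1$ is indispensable both for the derivative bounds on $G_\delta$ and for the weight-transfer provided by Lemma \ref{bd}.
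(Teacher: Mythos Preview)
Your proposal is correct and follows essentially the same three-step template as the paper: a direct computation for the kinetic part, the splitting $(\mathcal{L}g,G_\delta^2 g)=(\mathcal{L}(G_\delta g),G_\delta g)+([\mathcal{L},G_\delta]g,G_\delta g)$ for the linear part, and the decomposition $G_\delta\mathcal{K}(g,g)=\mathcal{K}(g,G_\delta g)+\mathcal{R}g$ for the nonlinear part, with $\mathcal{K}(g,G_\delta g)$ handled by Proposition~\ref{huak} and the remainder by a Taylor expansion of $G_\delta(v)-G_\delta(v')$.

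A few minor differences in implementation are worth noting. First, the paper records the sharper pointwise bounds $|\partial_v G_\delta|\le C\langle v\rangle^{2\tilde s-1}G_\delta$ and $|\partial_v^2 G_\delta|\le C\langle v\rangle^{4\tilde s-2}G_\delta$ (Lemma~\ref{5.1}); your cruder $|\partial_v^k G_\delta|\le CG_\delta$ follows since $2\tilde s\le 1$, but the paper's commutator bound $|([\mathcal{L},G_\delta]g,h)|\le C\|\langle v\rangle^s G_\delta g\|\,\|h\|$ (Lemma~\ref{pr3}) carries an extra $\langle v\rangle^s$ that is then absorbed by $|||\cdot|||_{(r,0)}$ via \eqref{2.2.1xc}. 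Second, for the linear commutator no angular splitting is actually needed---as in Lemma~\ref{2.4}, the first- or second-order Taylor formula already renders $\beta(\theta)$ integrable---so your $|\theta|\lessgtr|v|^{-1/2}$ cut is superfluous there; the paper introduces an angular cut $\Xi_1=\{|\theta|\le|v|^{-s}\}$ only for the \emph{nonlinear} commutator (Lemma~\ref{unlinearD}). Third, the weight transfer in the paper does not go through Lemma~\ref{bd} but through the more direct observation $(v-v')=\tan\tfrac{\theta}{2}(v_*+v_*')$, yielding $\langle v_\tau\rangle^{2\tilde s}\le \langle v'\rangle^{2\tilde s}+\langle v_*'\rangle^{2\tilde s}+C\langle v_*\rangle^{2\tilde s}$ and hence $G_\delta(v_\tau)\le CG_\delta(v')G_\delta(v_*')e^{c_0 t\langle v_*\rangle^{2\tilde s}}$, with the overhead absorbed by $\mu^{1/2}(v_*)$. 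Finally, the paper bounds the remainder $(\mathcal{R}g,G_\delta g)$ by a direct Cauchy--Schwarz (Lemma~\ref{unlinearD}) rather than by invoking Corollary~\ref{huagamma}; your route through Corollary~\ref{huagamma} is plausible but the corollary's trilinear structure does not match $\mathcal{R}g$ cleanly, so the paper's direct argument is the safer choice.
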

Then the nonlinear Gronwell inequality give immediatly the following results.
\begin{equation*}
||G_\delta g(t)||^2_{H^r_x(L^2_v)}+\int^t_0||||G_\delta g(\tau)|||^2_{(r,0)}d\tau\leq \frac{\|g_0\|^2_{H^r_x(L^2_v)}}{1-Ct\|g_0\|^2_{H^r_x(L^2_v)}}.
\end{equation*}    
Exactly as in Section \ref{section3}, we only need to study the estimates of commutators.

To prove the commutator estimation between linear operator $\mathcal{L}$ and $G_\delta$, we give the following lemma.

\begin{lemma}\label{5.1}
For $G_\delta(t,v)$, we have
\begin{align*}
|\partial_v G_{\delta}(t,v)|&\le C \langle v\rangle^{2\tilde{s}-1}G_{\delta}(t,v),\\
|\partial^2_v G_{\delta}(t,v)|&\le C \langle v\rangle^{4\tilde{s}-2}G_{\delta}(t,v).
\end{align*}
\end{lemma}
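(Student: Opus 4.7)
The plan is to reduce everything to a single scalar phase $\Phi(t,v):=c_0 t\langle v\rangle^{2\tilde s}$, so that $G_\delta(t,v)=e^{\Phi}/(1+\delta e^{\Phi})$. I would first record two elementary pointwise bounds on the derivatives of $\Phi$ which are the real content of the lemma:
\begin{align*}
|\partial_v\Phi(t,v)|&=2\tilde s\, c_0 t\,\langle v\rangle^{2\tilde s-2}|v|\le C\langle v\rangle^{2\tilde s-1},\\
|\partial_v^2\Phi(t,v)|&\le C\bigl(\langle v\rangle^{2\tilde s-2}+\langle v\rangle^{2\tilde s-4}v^2\bigr)\le C\langle v\rangle^{2\tilde s-2},
\end{align*}
where $C$ depends on $T$ through $c_0t\le c_0T$, and where I use $\langle v\rangle\ge 1$ throughout.

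Next, a direct quotient-rule computation yields the key algebraic identity
\[
\partial_v G_\delta=\frac{\partial_v\Phi\, e^{\Phi}}{(1+\delta e^{\Phi})^{2}}=\frac{\partial_v\Phi}{1+\delta e^{\Phi}}\,G_\delta.
\]
Since the factor $(1+\delta e^{\Phi})^{-1}\le 1$ uniformly in $\delta>0$, the first bound follows immediately from the $\partial_v\Phi$ estimate above.

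For the second derivative I would differentiate the identity once more, giving
\[
\partial_v^2 G_\delta=\frac{\partial_v^2\Phi}{1+\delta e^{\Phi}}G_\delta+\frac{(\partial_v\Phi)^2}{(1+\delta e^{\Phi})^{2}}G_\delta-\frac{(\partial_v\Phi)^2\,\delta e^{\Phi}}{(1+\delta e^{\Phi})^{2}}G_\delta.
\]
Again using $(1+\delta e^{\Phi})^{-1}\le 1$ and $\delta e^{\Phi}(1+\delta e^{\Phi})^{-1}\le 1$, one gets
\[
|\partial_v^2 G_\delta|\le\bigl(|\partial_v^2\Phi|+2|\partial_v\Phi|^{2}\bigr)G_\delta\le C\bigl(\langle v\rangle^{2\tilde s-2}+\langle v\rangle^{4\tilde s-2}\bigr)G_\delta\le C\langle v\rangle^{4\tilde s-2}G_\delta,
\]
where the last inequality uses $\langle v\rangle\ge 1$ and $\tilde s\ge 0$ to dominate the lower power. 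There is no real obstacle here; the only thing to watch is that the bounds are uniform in $\delta\in(0,1)$, which is automatic since every factor involving $\delta$ is absorbed by one of the two elementary inequalities $(1+\delta e^{\Phi})^{-1}\le 1$ and $\delta e^{\Phi}(1+\delta e^{\Phi})^{-1}\le 1$. This is exactly the same mechanism used for the Fourier multiplier $M_\delta$ in Lemma \ref{M}, so the proof is essentially a transcription of that argument, with the phase $\Psi_s$ replaced by $c_0t\langle v\rangle^{2\tilde s}$ and the dominating power $\langle v\rangle^{2\tilde s-1}$ (resp. $\langle v\rangle^{4\tilde s-2}$) replacing the $O(1)$ bound on $\partial_\xi\Psi_s$.
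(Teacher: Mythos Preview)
Your proof is correct and complete. The paper states this lemma without an explicit proof, relying on the analogy with Lemma~\ref{M} for $M_\delta$; your argument is precisely the intended one---reducing to pointwise bounds on $\partial_v\Phi$ and $\partial_v^2\Phi$ and absorbing all $\delta$-dependence via $(1+\delta e^{\Phi})^{-1}\le 1$.
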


Then, we deal with the commutator estimation of linear operator $\mathcal{L}$ and $G_\delta$ as follows.

\begin{lemma}\label{pr3}
Assume that the cross-section satisfies \eqref{1.1+1} with $0<s<1,\,0<\delta<1$ and $r>\frac{1}{2}$. Then
\begin{equation*}
    |([\mathcal{L},\ G_\delta]g, h)_{H^r_x(L^2_v)}|\le C\|\langle v\rangle^s G_\delta g\|_{H^r_x(L^2_v)}\|h\|_{H^r_x(L^2_v)},
\end{equation*}
\end{lemma}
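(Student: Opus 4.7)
The plan is to adapt the commutator analysis of Lemmas~\ref{2.4} and~\ref{2.4.1} to the present setting, the essential difference being that $G_\delta(t,v)$ is a multiplier in the physical velocity variable rather than a Fourier multiplier; in particular $G_\delta$ is independent of $x$, so $\langle D_x\rangle^r$ commutes with $G_\delta$ and, after applying Plancherel in $x$, it suffices to prove the corresponding estimate uniformly in the $x$-frequency. I therefore work directly in $v$-space. Splitting $[\mathcal{L},G_\delta]=[\mathcal{L}_1,G_\delta]+[\mathcal{L}_2,G_\delta]$, an algebraic manipulation of the collision integral gives
\begin{align*}
[\mathcal{L}_1,G_\delta]g(v)=-\iint \beta(\theta)\mu^{1/2}(v_*)\mu^{1/2}(v_*')\,\{G_\delta(t,v')-G_\delta(t,v)\}\,g(v')\,d\theta\,dv_*,
\end{align*}
and an analogous expression for $[\mathcal{L}_2,G_\delta]g$ involving the differences $G_\delta(v_*')-G_\delta(v)$ and $G_\delta(v_*)-G_\delta(v)$ paired with Gaussian factors.

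The two main ingredients are Taylor expansion of $G_\delta$ (first order for $0<s<1/2$, second order for $1/2\le s<1$, using the cancellation $\iint\beta(\theta)(v-v')F(v')\,d\theta dv_*=0$ for suitable $F$) combined with the derivative bounds of Lemma~\ref{5.1}, together with a pointwise control of the ratio $G_\delta(t,v_\tau)/G_\delta(t,v)$ along the Taylor segment $v_\tau=v+\tau(v'-v)$. Subadditivity $\langle a+b\rangle^{2\tilde s}\le \langle a\rangle^{2\tilde s}+\langle b\rangle^{2\tilde s}$, valid because $2\tilde s\le 1$, yields
\[
\langle v_\tau\rangle^{2\tilde s}\le \langle v\rangle^{2\tilde s}+C\langle v_*\rangle^{2\tilde s},
\]
so that $G_\delta(t,v_\tau)\le C\,G_\delta(t,v)\,e^{c_0 t\langle v_*\rangle^{2\tilde s}}$, and the growth in $v_*$ is absorbed by the Gaussian kernel $\mu^{1/2}(v_*)\mu^{1/2}(v_*')$. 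A rotation of variables $(v,v_*)\leftrightarrow(v',v_*')$ (unit Jacobian) is then used to move $g(v')$ back to the base point $v$, and Lemma~\ref{bd} permits the weight $\langle v\rangle^s$ to be transferred between pre- and post-collisional velocities at the cost of a harmless constant.

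The main obstacle is to deliver exactly the weight $\langle v\rangle^s$ on the $g$-side while keeping the bare $L^2_v$ norm on $h$. The Taylor step naturally produces a prefactor $|v-v'|\cdot\langle v\rangle^{2\tilde s-1}G_\delta(v)$; combined with $|v-v'|\le C(|v|+|v_*|)|\sin(\theta/2)|$ and the angular integrability inherited from the Taylor cancellation, this delivers a weight of order $\langle v\rangle^{2\tilde s}\le \langle v\rangle^{2s}$ in front of $G_\delta(v)g(v')\overline{h(v)}$. A careful Cauchy--Schwarz then splits $\langle v\rangle^{2s}=\langle v\rangle^s\cdot\langle v\rangle^s$, retaining one factor with $G_\delta g$ and absorbing the other alongside the Gaussian weights on the $h$-side via Lemma~\ref{bd}. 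The strong-singularity case $1/2\le s<1$ is the most delicate: a second-order Taylor is needed to cancel the leading non-integrable contribution (as in the proof of Lemma~\ref{2.4}), and the bound $|\partial_v^2 G_\delta|\le C\langle v\rangle^{4\tilde s-2}G_\delta$ from Lemma~\ref{5.1} is sharp precisely because $4\tilde s-2=0$ for $\tilde s=1/2$. The commutator with $\mathcal{L}_2$ is handled along identical lines, and is in fact slightly simpler thanks to the additional Gaussian factors $\mu^{1/2}(v)$ or $\mu^{1/2}(v')$ already present in each term.
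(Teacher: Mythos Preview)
Your overall scheme—Taylor expansion of $G_\delta$ (first order for $0<s<1/2$, second order for $1/2\le s<1$), Lemma~\ref{5.1} for the derivative bounds, subadditivity to control $G_\delta(v_\tau)$, and a pre/post-collisional rotation in one Cauchy--Schwarz factor—matches the paper's outline. However, two of your pointwise estimates are not sharp enough, and together they create a genuine gap.

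First, you bound $|v-v'|\le C(|v|+|v_*|)|\sin(\theta/2)|$, which injects a factor $|v|$ into the integrand. Combined with $|\partial_v G_\delta|\le C\langle v\rangle^{2\tilde s-1}G_\delta$, this leaves a weight $\langle v\rangle^{2\tilde s}$ on $G_\delta g(v')\overline{h(v)}$. Your plan to split this as $\langle v\rangle^s\cdot\langle v\rangle^s$ and ``absorb the other alongside the Gaussian weights on the $h$-side via Lemma~\ref{bd}'' does not work: $h$ is evaluated at $v$, there is no Gaussian in $v$ on that side, and Lemma~\ref{bd} only trades $\langle v\rangle^{2s}+\langle v_*\rangle^{2s}$ for $\langle v'\rangle^{2s}+\langle v_*'\rangle^{2s}$—it cannot convert a $\langle v\rangle^s$ weight into something controlled by $\mu^{1/2}(v_*)$. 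At best you obtain $\|\langle v\rangle^s G_\delta g\|\,\|\langle v\rangle^s h\|$ or $\|\langle v\rangle^{2s} G_\delta g\|\,\|h\|$, neither of which is the stated bound (and neither is absorbed by $|||G_\delta g|||$ in the application, since \eqref{2.2.1xc} only yields $\|\langle v\rangle^s\,\cdot\,\|\lesssim |||\cdot|||$). The paper avoids this entirely by using the exact identity
\[
v-v'=\tan\tfrac{\theta}{2}\,(v_*+v_*'),
\]
so $|v-v'|$ depends only on $\theta,v_*,v_*'$; since $2\tilde s-1\le 0$ one simply bounds $\langle v_\tau\rangle^{2\tilde s-1}\le 1$, and the remaining factors $|v_*|+|v_*'|$ are swallowed by $\mu^{1/2}(v_*)\mu^{1/2}(v_*')$. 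No $v$-weight is ever introduced.

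Second, you control $G_\delta(v_\tau)$ by $G_\delta(v)$ via $\langle v_\tau\rangle^{2\tilde s}\le \langle v\rangle^{2\tilde s}+C\langle v_*\rangle^{2\tilde s}$, but $g$ lives at $v'$; your proposed rotation $(v,v_*)\leftrightarrow(v',v_*')$ to ``move $g(v')$ back'' simultaneously moves $G_\delta(v)$ away, so the mismatch persists. The correct subadditivity (stated in the paper just after the lemma) is $\langle v_\tau\rangle^{2\tilde s}\le \langle v'\rangle^{2\tilde s}+|v_*|+|v_*'|$, which pairs $G_\delta$ with the point $v'$ where $g$ sits; after Cauchy--Schwarz the rotation then cleanly yields $\|G_\delta g\|$ in that factor.
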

The proof is almost the same as the Fourier multiplier, and for the case of $0<s<\frac 12$, it follows from the first-order Taylor formula,
\begin{align*}
    G_\delta(t, v)-G_\delta(t, v’)=\int_0^1(v-v')\partial_vG_\delta(t, v_\tau)d\tau,
\end{align*}
and
$$
(v-v')=2\sin^2\frac{\theta}{2}\, v-\sin\theta\,  v_*=\tan\frac{\theta}{2}(v_*+v_*'),
$$
we need also 
$\langle v_{\tau}\rangle^{2\tilde{s}}\le \langle v'\rangle^{2\tilde{s}}+|v_*|+|v_*'|$.

For the Case of $\frac 12\le s<1$, we use the second-order Taylor formula
\begin{align*}
    G_\delta(t, v)-G_\delta(t, v’)=(v-v')&\partial_vG_\delta(t, v')\\
    &+\int_0^1(1-\tau)(v-v')^2\partial_v^2G_\delta(t, v_\tau)d\tau,
\end{align*}
We give here some computation for non linear commutaors, and the linear case can be deduce immediatly.
\begin{lemma}\label{unlinearD}
Assume that the cross-section satisfies \eqref{1.1+1} with $0<s<1,\,\delta<1$ and $r>1/2$. Then we have
\begin{align*}
\Big|\Big( &\mathcal{K}(f,\ G_{\delta}g)-G_{\delta}\mathcal{K}(f,\ g),\  h \Big)_{H^r_x(L^2_v)}\Big| \\
&\leq C ||G_{\delta}{f}\|_{H^r_x(L^2_v)}\| G_{\delta}g\|_{H^r_x(L^2_v)}|||h|||_{(r,0)}.
\end{align*}
\end{lemma}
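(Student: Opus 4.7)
The plan is to follow the template of Lemma \ref{2.4} and of the $A_1$-analysis for the Fourier multiplier $M_\delta$, but entirely in the physical velocity variable since $G_\delta$ is a multiplication operator in $v$. The loss contributions cancel exactly, leaving
\begin{equation*}
\mathcal{K}(f,G_\delta g)(v)-G_\delta(t,v)\mathcal{K}(f,g)(v)
=\int_{\mathbb{R}}\int_{-\pi/2}^{\pi/2}\beta(\theta)\mu^{1/2}(v_*)f(v_*')\bigl[G_\delta(t,v')-G_\delta(t,v)\bigr]g(v')\,d\theta\,dv_*.
\end{equation*}
Since $G_\delta$ commutes with $\langle D_x\rangle^r$, the $x$-side treatment when pairing with $h$ in $H^r_x(L^2_v)$ is identical to that in Corollary \ref{huagamma}; the real work happens in the velocity integral.

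I would treat the velocity difference $G_\delta(t,v)-G_\delta(t,v')$ by Taylor expansion, exactly as was done for $M_\delta$. For the mild-singularity range $0<s<1/2$ (so $\tilde s=s$), a first-order expansion $G_\delta(t,v)-G_\delta(t,v')=(v-v')\int_0^1\partial_v G_\delta(t,v_\tau)\,d\tau$ suffices: Lemma \ref{5.1} gives $|\partial_v G_\delta(v_\tau)|\leq C\langle v_\tau\rangle^{2\tilde s-1}G_\delta(v_\tau)\leq C\,G_\delta(v_\tau)$, and combined with the identity $v-v'=2\sin^2(\theta/2)\,v-\sin\theta\,v_*=\tan(\theta/2)(v_*+v_*')$ this leaves the integrable singularity $\beta(\theta)|v-v'|\sim\theta^{-2s}(|v_*|+|v_*'|)$. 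For $1/2\leq s<1$ I switch to second order; the quadratic remainder is tamed by $\beta(\theta)(v-v')^2\sim\theta^{1-2s}$ and Lemma \ref{5.1}, while the linear piece $(v-v')\partial_v G_\delta(v')$ is handled by a $\theta\to-\theta$ symmetrization: the $-\sin\theta\,v_*$ contribution picks up an extra $\sin\theta$ from the symmetrization (since $\beta$ is even), leaving $\beta(\theta)\sin^2\theta\sim\theta^{1-2s}$, and the remaining $2\sin^2(\theta/2)\,v$ piece is already controlled by $\beta(\theta)\sin^2(\theta/2)\sim\theta^{1-2s}$.

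The third step is a weight transfer onto $f$ and $g$ to produce a trilinear $\mathcal T$-form. Using subadditivity of $\langle\cdot\rangle^{2\tilde s}$ (valid because $2\tilde s\leq 1$) and the increasing-function trick $\tfrac{e^{A+B}}{1+\delta e^{A+B}}\leq 3\,\tfrac{e^A}{1+\delta e^A}\,e^B$ already exploited in Lemma \ref{7num},
\begin{equation*}
G_\delta(t,v_\tau)\leq C\,G_\delta(t,v')\,e^{c_0 t\langle v-v'\rangle^{2\tilde s}}\leq C\,G_\delta(t,v')\,e^{c_0 t(\langle v_*\rangle^{2\tilde s}+\langle v_*'\rangle^{2\tilde s})},
\end{equation*}
and the exponential excess is absorbed into $\mu^{1/2}(v_*)$ after the measure-preserving change of variables $(v_*,v)\to(v_*',v')$, producing an effective cross-weight $\omega(v_*)$ with $|\omega(v_*)|\leq C\mu^{1/4}(v_*)$. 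Applying Corollary \ref{huagamma} to $\mathcal T(G_\delta f, G_\delta g,\omega)$ paired with $h$ then yields the claimed bound; the integrable $\theta$-factor produced by the Taylor step is exactly what allows one to replace the triple-norm on $G_\delta g$ by $\|G_\delta g\|_{H^r_x(L^2_v)}$.

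The main obstacle will be the weight transfer at the intermediate point. Because $v_\tau$ depends on $(\theta,v,v_*)$ asymmetrically, the exponential loss $e^{c_0 t\langle v-v'\rangle^{2\tilde s}}$ must be genuinely dominated by $\mu^{1/4}(v_*)$ after the change of variables, and this balance holds only because $2\tilde s\leq 1$ and $|\tan(\theta/2)|\leq 1$ on $[-\pi/2,\pi/2]$, which is precisely where the threshold $\tilde s=\min\{s,1/2\}$ in Theorem \ref{theo1} enters; any attempt to run the same argument with $2\tilde s>1$ fails at exactly this step, mirroring the obstruction noted in Section \ref{section2} for the $M_\delta$ analysis.
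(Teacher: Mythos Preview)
Your overall plan---Taylor expand $G_\delta(v)-G_\delta(v')$, first order for $0<s<1/2$ and second order for $1/2\le s<1$, then transfer the weight $G_\delta(v_\tau)$ onto $g(v')$ and $f(v_*')$---matches the paper's. The gap is in the weight transfer. Your inequality
\[
G_\delta(t,v_\tau)\;\le\;C\,G_\delta(t,v')\,e^{c_0t(\langle v_*\rangle^{2\tilde s}+\langle v_*'\rangle^{2\tilde s})}
\]
is correct but too weak: the factor $e^{c_0t\langle v_*'\rangle^{2\tilde s}}$ is \emph{not} absorbed by $\mu^{1/2}(v_*)$, because $v_*'=v\sin\theta+v_*\cos\theta$ can be large even when $v_*$ is small. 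After your change of variables $(v_*,v)\to(v_*',v')$ the Maxwellian becomes $\mu^{1/2}(-v'\sin\theta+v_*'\cos\theta)$, which depends on \emph{both} new variables and on $\theta$; choosing $v'\sin\theta=v_*'\cos\theta$ collapses it to a constant while $e^{c_0t\langle v_*'\rangle^{2\tilde s}}$ still grows, so no weight $\omega(v_*)\le C\mu^{1/4}(v_*)$ emerges. Equivalently, converting $e^{c_0t\langle v_*'\rangle^{2\tilde s}}$ back to $G_\delta(v_*')$ costs the factor $1+\delta e^{c_0t\langle v_*'\rangle^{2\tilde s}}$, which is not uniform in $\delta$.

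The paper repairs this by using the collision invariant $|v|^2+|v_*|^2=|v'|^2+|v_*'|^2$, which gives $|v_\tau|\le\max(|v|,|v'|)\le(|v'|^2+|v_*'|^2)^{1/2}$ and hence
\[
\langle v_\tau\rangle^{2\tilde s}\;\le\;\langle v'\rangle^{2\tilde s}+\langle v_*'\rangle^{2\tilde s}+C\langle v_*\rangle^{2\tilde s},
\qquad
G_\delta(v_\tau)\;\le\;C\,G_\delta(v')\,G_\delta(v_*')\,e^{c_0t\langle v_*\rangle^{2\tilde s}},
\]
with \emph{two} $G_\delta$-factors rather than one. Now $G_\delta(v_*')$ lands directly on $f(v_*')$ without any change of variables, and the sole remaining exponential $e^{c_0t\langle v_*\rangle^{2\tilde s}}$ is genuinely dominated by $\mu^{1/2}(v_*)$. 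The paper then finishes not via Corollary~\ref{huagamma}---the commutator is a pure gain term, not a $\mathcal T$-form, and the weight is not a function of $v_*$ alone---but by a direct Cauchy--Schwarz split together with the small/large-angle decomposition $\Xi_1=\{|\theta|\le |v|^{-s}\}$, $\Xi_2=\{|\theta|\ge |v|^{-s}\}$, mirroring the $\Theta_j$ split you cite from the $A_1$-analysis.
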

\begin{proof}
We have firstly
\begin{align*}
\Big( &\mathcal{K}(f,\ G_{\delta}g)-G_{\delta}\mathcal{K}(f\ ,g),\  h \Big)_{H^r_x(L^2_v)} \\
&=
\int_{\mathbb{R}^3}\int^{\frac{\pi}{2}}_{-\frac{\pi}{2}}\mu^{-\frac{1}{2}}(v_*)\beta(\theta)\Big\{ G_\delta(t, v')\langle D_x\rangle^rg(t,x,v')f(t,x,v'_*)\\
&\qquad-\langle D_x\rangle^rG_\delta(t, v)g(t,x,v)f(t,x,v_*)\Big\}\langle D_x\rangle^r\bar{h}(t,x,v) d\theta dv_* dv dx\\
&=\int_{\mathbb{R}^3}\int^{\frac{\pi}{2}}_{-\frac{\pi}{2}}\mu^{-\frac{1}{2}}(v)\beta(\theta) \mu(v_*')\mu^{\frac{1}{2}}(v')\{G_\delta(t, v')-G_\delta(t, v)\}\\
&\qquad\qquad\times \langle D_x\rangle^rg(t, x, v')f(t,x,v'_*) \langle D_x\rangle^rh(t,x,v) d\theta dv_* dv dx.
\end{align*}

\textbf{The case of $0<s<1/2$: }
Note that $|v|^2=|v'|^2+|v'_* |^2-|v_*|^2$, thus when $0<s<\frac 12$
\begin{align*}
    \langle v_\tau\rangle^s\le \langle v'\rangle^s+\langle v_*'\rangle^s+C\langle v_*\rangle^s,
\end{align*}
we have
\begin{equation*}
    G_\delta(t, v_\tau)\le 3G_\delta(t, v')G_\delta(t, v_*')e^{c_0t\langle v_*\rangle^s}.
\end{equation*}
We also divide the space into two parts,
\begin{align*}
    \Xi_1=\Big\{|\theta|\le\frac{1}{|v|^s}\Big\},\qquad\qquad\Xi_2=\Big\{|\theta|\ge \frac{1}{|v|^s}\Big\}.
\end{align*}
Then
\begin{align*}
\Big( &\mathcal{K}(f,\ G_{\delta}g)-G_{\delta}\mathcal{K}(f,\ g),\  h \Big)_{H^r_x(L^2_v)}\\
&=\int_{\mathbb{R}^3}\int^{\frac{\pi}{2}}_{-\frac{\pi}{2}}\beta(\theta)\mu^{\frac{1}{2}}(v_*)\int_0^1\frac{1}{2}\sin^2\!\left(\frac{\theta}{2}\right)v\partial_vG_\delta(t, v_\tau)d\tau\\
&\qquad\qquad\times \langle D_x\rangle^rg(t, x, v')f(t,x,v_*') \langle D_x\rangle^rh(t,x,v) d\theta dv_* dv dx\\
&\qquad+\int_x\int_v\int_{v_*}\int_{\theta}\beta(\theta)\mu^{\frac{1}{2}}(v_*)\int_0^1\sin\theta\,v_*\partial_vG_\delta(t, v_\tau)d\tau\\
&\qquad\qquad\times \langle D_x\rangle^rg(t, x, v')f(t,x,v_*') \langle D_x\rangle^rh(t,x,v) d\theta dv_* dv dx\\
&=E_1+E_2.
\end{align*}
Thus, by Lemma \ref{5.1}, for the integrale over $\Xi_1$ 
\begin{align*}
|E_{11}|
&\le C\Big\{\int_{\mathbb{R}^2_{x,v_*}}\int_{\Xi_1}\beta(\theta)\sin\theta\Big|G_\delta(t, v')G_\delta(t, v_*')\\
&\qquad\qquad\times\langle D_x\rangle^rg(t, x, v')f(t,x,v_*') \Big|^2 d\theta dv_* dv dx\Big\}^{1/2}\\
&\qquad\times\Big\{\int_x\int_v\int_{v_*}\int_{\theta}\beta(\theta)\sin\theta\Big|\mu^{\frac{1}{2}}(v_*)\langle v_*\rangle^{1-2\tilde{s}}e^{c_0t |v_*|^{s}}\\
&\qquad\qquad\times\langle D_x\rangle^rh(t,x,v)\langle v \rangle^{s} \Big|^2 d\theta dv_* dv dx\Big\}^{1/2},
\end{align*}
which imply 
That means,
\begin{align*}
    |E_{11}|\le C \|G_\delta g\|_{H^r_x(L^2_v)}\|G_\delta f\|_{H^r_x(L^2_v)}\|\langle v \rangle^{s}h\|_{H^r_x(L^2_v)},
\end{align*}
and the same for $E_{21}$. We omite the estimates of integrale over $\Xi_2$, and also the case of $\frac 12\le s<1$.
\end{proof}

\bigskip
\noindent {\bf Acknowledgements.} 
The second author (H. M. Cao) was supported by the NSFC (No. 12001269) and the Fundamental Research Funds for the Central Universities of China. 
The third author (C.-J. Xu) was supported by the NSFC (No.12031006, No.12426632) and the Fundamental Research Funds for the Central Universities of China.

\bigskip
\noindent
{\bf Data availability.} No data was used for the research described in the article.

\bigskip\noindent
{\bf Conflict of interest.} The authors declare that they have no conflict of interest.


\bibliographystyle{plain}

\end{document}